\numberwithin{equation}{section} 
\numberwithin{figure}{section} 
\numberwithin{table}{section} 
\newtheorem{lemma}{Lemma}[section] 
\newtheorem{theorem}{Theorem}[section] 
\newtheorem{assumption}{Assumption}[section] 
\newtheorem{proposition}{Proposition}[section] 
\newtheorem{corollary}{Corollary}[section]
\theoremstyle{remark} 
\newtheorem{remark}{Remark}[section] 
\theoremstyle{definition} 
\newtheorem{example}{Example}[section]
\def\ppn{\vskip 6pt \noindent }
\def\R{{\mathbb{R}}}
\def\N{{\mathbb{N}}}
\def\P{{\mathbb{P}}}
\def\E{{\mathbb{E}}}
\newcommand{{\Xs}}{{\cal X}}
\newcommand{{\Ys}}{{\cal Y}}
\newcommand{{\Ls}}{{\cal L}}
\newcommand{{\Ss}}{{\cal S}}
\newcommand{{\Ms}}{{\cal M}}
\newcommand{{\Gs}}{{\cal G}}
\newcommand{{\Hs}}{{\cal H}}
\newcommand{{\Ns}}{{\cal N}}
\newcommand{{\Is}}{{\cal I}}
\newcommand{{\Bs}}{{\cal B}}
\newcommand{{\Cs}}{{\cal C}}
\newcommand{{\Rs}}{{\cal R}}
\newcommand{{\Us}}{{\cal U}}
\newcommand{{\pp}}{{\mathbf p}}
\newcommand{{\KK}}{{\mathbf K}}
\newcommand{{\HH}}{{\mathbf H}}
\newcommand{{\II}}{{\mathbf I}}
\newcommand{{\yy}}{{\mathbf y}}
\newcommand{{\ab}}{{\mathbf a}}
\newcommand{\C}{\mathbb{C}}
\newcommand\encircle[1]{%
  \tikz[baseline=(X.base)] 
    \node (X) [draw, shape=circle, inner sep=0] { #1};}
\newcommand{{\toL}}{{\overset{\mathcal{L}}{\longrightarrow}\ }}
\newcommand{{\MC}}{{\,  *_{\text{\scalebox{0.65}{$\Ms$}}}\,  }}
\newcommand{{\dou}}{$\leadsto$\ }
\DeclareMathOperator{\var}{\mathbb{V}ar}
\DeclareMathOperator{\bias}{\mathbb{B}ias}
\newcommand{\indic}[1]{
\hbox{${\it 1}\hskip -4.5pt I_{\{ #1 \}}$}
}
\begin{document}

\setlength{\belowdisplayskip}{5pt} \setlength{\belowdisplayshortskip}{3pt}
\setlength{\abovedisplayskip}{5pt} \setlength{\abovedisplayshortskip}{0pt}

\title{Mellin-Meijer-kernel density estimation on $\R^+$}
\author{\sc{Gery Geenens}\thanks{email: {\tt ggeenens@unsw.edu.au}, tel +61 2 938 57032, fax +61 2 9385 7123 }\\School of Mathematics and Statistics,\\ UNSW Sydney, Australia }
\date{\today}
\maketitle
\thispagestyle{empty} 


\begin{abstract}
\noindent Nonparametric kernel density estimation is a very natural procedure which simply makes use of the smoothing power of the convolution operation. Yet, it performs poorly when the density of a positive variable is to be estimated (boundary issues, spurious bumps in the tail). So various extensions of the basic kernel estimator allegedly suitable for $\R^+$-supported densities, such as those using Gamma or other asymmetric kernels, abound in the literature. Those, however, are not based on any valid smoothing operation analogous to the convolution, which typically leads to inconsistencies. By contrast, in this paper a kernel estimator for $\R^+$-supported densities is defined by making use of the Mellin convolution, the natural analogue of the usual convolution on $\R^+$. From there, a very transparent theory flows and leads to new type of asymmetric kernels strongly related to Meijer's $G$-functions. The numerous pleasant properties of this `Mellin-Meijer-kernel density estimator' are demonstrated in the paper. Its pointwise and $L_2$-consistency (with optimal rate of convergence) is established for a large class of densities, including densities unbounded at 0 and showing power-law decay in their right tail. Its practical behaviour is investigated further through simulations and some real data analyses.
\end{abstract}

\section{Introduction}\label{sec:intro}

Kernel density estimation is a very popular nonparametric method which enables estimation of an unknown probability density function without making any assumption on its functional shape. Its main ingredients are a {\it kernel} function $K$, typically a unit-variance probability density symmetric around 0, and a {\it smoothing parameter} $h>0$ fixed by the analyst, which controls the smoothness of the resulting estimate. One usually defines $K_h(\cdot) = K(\cdot/h)/h$, the rescaled version of $K$ which has standard deviation $h$. A common choice for $K$ is the standard normal density $\phi$. Then, the estimator simply makes use of the well-known smoothing power of the convolution operation. Specifically, consider a sample $\{X_k,k=1,\ldots,n\}$ drawn from a distribution $F$ admitting a density $f$, and define its empirical measure $\P_n = \frac{1}{n}\sum_{k=1}^n \delta_{X_k}$, where $\delta_x(\cdot) = \delta( \cdot -x)$ and $\delta$ is the usual Dirac delta. The conventional kernel density estimator of $f$ is just
\begin{equation} \hat{f}(x) = \left(K_h * \P_n \right)(x). \label{eqn:convolkde} \end{equation}
Expanding this convolution yields the familiar expression:
\begin{equation} \hat{f}(x) = \int_{-\infty}^{+\infty} K_h(x-u) \frac{1}{n}\sum_{k=1}^n \delta_{X_k}(u)\,du = \frac{1}{n} \sum_{k=1}^n K_h(x-X_k) = \frac{1}{nh} \sum_{k=1}^n K\left(\frac{x-X_k}{h}\right). \label{eqn:convkde} \end{equation}
The statistical properties of $\hat{f}$ are well understood \citep{Wand95,Hardle04}, and its merit is widely recognised when the support of $F$ is the whole real line $\R$. Unfortunately, when the support of $F$ admits boundaries, the good properties of (\ref{eqn:convkde}) are usually lost.

\ppn A case of bounded support of major importance is when $F$ is the distribution of a positive random variable $X$, with density $f$ supported on $\R^+ = [0,+\infty)$. Typically, those distributions are skewed, with $f$ showing a maximum at or near the boundary 0 and a long tail on the right side. Sometimes, the behaviour of the density close to 0 is what mostly matters for the analyst; in other cases, it is rather the tail behaviour which is the main focus, for instance when high quantiles (e.g., Value-at-Risk) are of interest. Yet, (\ref{eqn:convkde}) fails to correctly estimate both the behaviour of $f$ close to 0 and in the tail. Close to 0, the estimator suffers from {\it boundary bias}: the terms $K_h(x-X_k)$ corresponding to $X_k$'s close to 0 typically overflow beyond the boundary and place positive probability mass in the forbidden area, generally preventing consistency of the estimator there \citep[Section 2.11]{Wand95}. In the tail region, where data are usually sparse, it produces `{\it spurious bumps}' \citep{Hall04}, i.e.\ artificial local maxima at each observed value, thus performing poorly as well. 

\ppn Hence modifications and extensions of (\ref{eqn:convkde}), attempting to make it suitable for  $\R^+$-supported densities, abound in the literature. Early attempts at curing boundary effects looked for correcting $\hat{f}$ close to 0. Those include the `cut-and-normalised' method and its variants based on `boundary kernels' \citep{Gasser79,Muller91,Jones93,Jones96,Cheng97,Zhang99,Dai10}, the reflection method \citep{Schuster85,Karunamuni05}, and other types of local data alteration \citep{Cowling96,Hall02,Park03}. These methods are essentially {\it ad hoc} manual surgeries on (\ref{eqn:convkde}) close to 0, and have shown their limitations. In addition, as they leave the tail area untouched, they do not address the `spurious bumps' at all.

\ppn Later, the problem was approached from a more global perspective and (\ref{eqn:convkde}) was generalised as
\begin{equation} \hat{f}(x) = \frac{1}{n} \sum_{k=1}^n L_h(X_k;x), \label{eqn:asymkde} \end{equation}
where $L_h(\cdot;x)$ is an asymmetric $\R^+$-supported density whose parameters are functions of $x$ and a smoothing parameter $h>0$. Using asymmetric kernels supposedly enables the estimator to take the constrained nature of the support of $f$ into account. In his pioneering work, \cite{Chen00} took $L_h(\cdot;x)$ to be the Gamma density with shape parameter $\alpha = 1+x/h^2$ and rate $\beta = 1/h^2$, defining the `first' Gamma kernel density estimator, viz. 
\begin{equation} \hat{f}(x) =   \frac{1}{n} \sum_{k=1}^n \frac{X_k^{x/h^2}e^{-X_k/h^2}}{h^{2x/h^2+2}\Gamma\left(x/h^2+1\right)} \label{eqn:gammakde} \end{equation}
(we use $h^2$ instead of Chen's original $b$ for the smoothing parameter, for consistency with standard notation). Although more types of asymmetric kernels were investigated in the subsequent literature (Log-Normal, \citet{Jin03,Igarashi16}; Birnbaum-Saunders, \citet{Jin03,Marchant13,Igarashi14}; Inverse Gaussian and reciprocal Inverse Gaussian distributions, \citet{Scaillet04,Igarashi14}), none really outperformed Chen's Gamma kernel density estimator which remains some sort of `gold standard' in the field. Its properties were further investigated in \cite{Bouezmarni05,Hagmann07,Zhang10} and \cite{Malec12}. Asymmetric kernel density estimation remains an area of very active research, as the number of recent papers in the area evidences \citep{Kuruwita10,Jeon13,Dobrovidov14,Igarashi14,Hirukawa14,Funke15,Markovic15,Hoffman15,Funke16,Igarashi16,Markovic16,Rosa16,Balakrishna17}. \cite{Hirukawa15} describe a family of `generalised Gamma kernels' which includes a variety of similar asymmetric kernels in an attempt to standardise those results. 

\ppn `Ironically', as \citet[Section 2]{Jones07} put it, such asymmetric kernel estimators {\it do not} really address boundary problems. Indeed, generally nothing prevents (\ref{eqn:asymkde}) from taking positive values for $x<0$. For instance, (\ref{eqn:gammakde}) is defined and positive for $x<0$ as long as $1+x/h^2 >0$. This explains why those estimators need a further correction near the boundary (`second' Gamma kernel estimator in \cite{Chen00}, also known as `modified' Gamma kernel estimator; see also Conditions 1 and 2 in \cite{Hirukawa15}), performing yet another `manual surgery' on an initially unsuitable estimator around 0. Worse, even the modified version of the Gamma kernel estimator was somewhat picked apart in \cite{Zhang10} and \cite{Malec12}.
 
\ppn Actually, those problems arise from the fact that estimators like (\ref{eqn:asymkde}) are, in general, not induced by any valid smoothing operation analogous to (\ref{eqn:convolkde}) on $\R^+$. Beyond questioning the mere validity of the construction, this has some unpleasant consequences. Those include that (\ref{eqn:asymkde}) does not automatically integrate to one, hence is not a {\it bona fide} density (and manually rescaling the estimate usually produces some extra bias). Also, in contrast to the obvious $(x-X_k)/h$ visible in (\ref{eqn:convkde}), it is not clear how (\ref{eqn:asymkde}) actually appreciates the proximity between $x$ and the observations $X_k$'s relative to $h$. The local nature of (\ref{eqn:asymkde}) is only induced by making the parameters of the kernel $L_h(\cdot;x)$ heuristically depend on $x$ and $h$ in a way barely driven by any intuition: see for instance (\ref{eqn:gammakde}) or \citet[Section 2.3.1]{Hirukawa15} for more general expressions. This spoils the intuitive simplicity of the initial kernel density estimation scheme.

\ppn These observations demonstrate the need for an asymmetric kernel density estimator based on a simple, natural and  transparent methodology, and with good theoretical {\it and} practical properties. This paper precisely suggests and studies such a methodology, finding its inspiration from yet another popular approach for kernel estimation of $\R^+$-supported densities: support transformation \citep{Copas80,Silverman86,Wand91,Marron94,Ruppert94,Geenens14,Geenens16}. Define $Y = \log(X)$. Kernel estimation of the density $g$ of $Y$ should be free from boundary issues, as $Y$ is supported on $\R$. From standard arguments, one has $f(x) = \frac{g(\log x)}{x}, x>0$, which suggests, upon estimation of $g$ by some estimator $\hat{g}$, the estimator $\hat{f}(x) = \frac{\hat{g}(\log x)}{x}$ for $f$. If one uses a basic kernel estimator like (\ref{eqn:convkde}) for estimating $g$, one obtains the closed form
\begin{equation} \hat{f}(x) = \frac{1}{nhx} \sum_{k=1}^n K\left(\frac{\log x-\log X_k}{h}\right). \label{eqn:logkde} \end{equation}
With the Gaussian kernel $K = \phi$ in (\ref{eqn:logkde}), one gets
\begin{equation} \hat{f}(x) = \frac{1}{nhx} \sum_{k=1}^n \phi\left(\frac{\log x-\log X_k}{h}\right) = \frac{1}{n} \sum_{k=1}^n \frac{1}{x\sqrt{2\pi h^2}}\exp\left(-\frac{(\log x-\log X_k)^2}{2h^2}\right). \label{eqn:lognormkde} \end{equation}
Interestingly, this can be written 
\begin{equation} \hat{f}(x) = \frac{1}{n} \sum_{k=1}^n L_h(x;X_k) \label{eqn:asymkde2} \end{equation}
where $L_h(\cdot;X_k)$ is the log-Normal density with parameters $\mu = \log X_k$ and $\sigma = h$. It seems, therefore, fair to call this estimator (\ref{eqn:lognormkde}) the {\it log-Normal kernel density estimator} -- remarkably, it is different to \cite{Jin03}'s and \cite{Igarashi16}'s homonymous estimators. It can be shown \citep{Geenens16} that, under suitable conditions,
\begin{align}
 \E\left(\hat{f}(x) \right) & = f(x) + \frac{1}{2} h^2 (x^2 f''(x) + 3x f'(x) + f(x)) + o(h^2) \label{eqn:asymptbiaslognormkde} \\
\var\left(\hat{f}(x)\right) & = \frac{f(x)}{ 2\sqrt{\pi}\,xnh} +o((nh)^{-1}),  \label{eqn:asymptvarlognormkde}
\end{align}
as $n \to \infty$, $h \to 0$ and $nh \to \infty$. However, and despite being simple and natural, (\ref{eqn:logkde})-(\ref{eqn:lognormkde}) shows very disappointing performance in practice, see for instance Figure 2.13 in \cite{Silverman86} or Figure 2.1 in \cite{Geenens16}. This estimator as-is has consequently been given little support in the literature.

\ppn There are, however, two important observations to make. The first is that this `log-Normal kernel density estimator' as (\ref{eqn:asymkde2}) is an asymmetric kernel density estimator but of a different nature to (\ref{eqn:asymkde}). Here $L_h(\cdot;X_k)$ is an asymmetric $\R^+$-supported density whose parameters are functions of $X_k$ and a smoothing parameter $h>0$, that is, the roles of $x$ and $X_k$ have been swapped around compared to (\ref{eqn:asymkde}). Surprisingly, estimators of type (\ref{eqn:asymkde2}) have not been investigated much in the literature, two notable exceptions being \cite{Jeon13} and the discusssion in \cite{Hoffman15}. Yet, (\ref{eqn:asymkde2}) is as valid a generalisation of (\ref{eqn:convkde}) as (\ref{eqn:asymkde}): $K$ being symmetric in (\ref{eqn:convkde}), $x$ and $X_k$ can be switched imperceptibly. For instance, in the case $K=\phi$, it is actually irrelevant whether $h^{-1}\phi((x-X_k)/h)$ is $\phi_{X_k,h}(x)$ or $\phi_{x,h}(X_k)$ (where $\phi_{\mu,\sigma}$ is the $\Ns(\mu,\sigma^2)$-density). In the asymmetric case, though, the respective roles of $x$ and $X_k$ are of import. 

\ppn Working with $L_h$ a proper density in $x$ is more natural, though. Clearly, $\int \hat{f}(x)\,dx = 1$ automatically, and $L_h(x;X_k) \equiv 0$ for $x<0$. Hence (\ref{eqn:asymkde2}) {\it cannot} assign probability weight to the negative values, which attacks the above-described boundary problem at the source. Also, $L_h(x;X_k)$ typically shares the same right-skewness as the density to be estimated, and this is usually beneficial to the estimator in the tail area (no more `spurious bumps'). It is not clear why estimators of type (\ref{eqn:asymkde2}) have remained so inconspicuous in the literature so far. 

\ppn The second important observation is how (\ref{eqn:logkde}) really addresses the boundary isssue. This will be discussed in detail in Section \ref{subsec:LNkde}, and will suggest a simple redefinition of (\ref{eqn:convolkde}) suitable for $\R^+$-supported densities. From there, a very natural methodology will flow and will lead to a new asymmetric kernel density estimator of type (\ref{eqn:asymkde2}), based on a valid and intuitive smoothing operation on $\R^+$. As it will be seen, the new estimator shares some similarities with the `convolution power kernel density estimator' of \cite{Comte12} and with the `varying kernel density estimator' of \cite{Mnatsakanov12}. The latter estimator actually arose as a by-product from those authors' previous work on recovering a probability distribution from its moments \citep{Mnatsakanov03,Mnatsakanov08b}. The appropriate tool for solving that so-called {\it Stieltjes moment problem} was seen to be the {\it Mellin transform} \citep{Mellin96}, and our estimator also has a strong `Mellin flavour'. The kernel functions that fit naturally in this framework belong to a family of $\R^+$-supported distributions strongly related to Meijer's $G$-functions \citep{Meijer36,Erdelyi53} -- the duality between the Mellin transform and Meijer's $G$-functions was elucidated in \cite{Marichev82}. Hence we call the whole methodology {\it Mellin-Meijer-kernel density estimation} on $\R^+$. The numerous pleasant properties of the estimator will be exhibited throughout the paper: it relies on a natural smoothing operation on $\R^+$, whereby it avoids any inconsistency; it has a closed form expression easy-to-understand intuitively; it always produce a {\it bona fide} density; the smoothness of its estimates is controlled by a natural smoothing parameter; it is consistent for densities potentially unbounded at the boundary $x=0$ and admitting power-law decay in the right tail; it reaches optimal MISE-rate of convergence over $\R^+$ under mild assumptions; etc. In addition, its theoretical properties are derived directly through the Mellin transform theory, which make all the proofs very transparent. Properties of the estimator `in the Mellin world' also suggest an easy way of selecting the always crucial smoothing parameter in practice. 

\ppn The paper is structured as follows: Section \ref{sec:motiv} motivates the `Mellin-Meijer' construction and revise the main properties of the Mellin transform and the Meijer $G$-functions. Section \ref{sec:Mkde} defines the estimator, while in Section \ref{subsec:asympt} its theoretical properties are derived. Section \ref{sec:smoothpar} suggests an easy way to select the smoothing parameter. Section \ref{sec:sim} and Section \ref{sec:realdat} investigate the performance of the estimator in practice, through simulations and real data examples, respectively. Section \ref{sec:ccl} summarises the main ideas of the paper and offers some perpectives for continuing this line of research. Futher properties, proofs and technical lemmas are provided in Appendix.

\section{Preliminaries} \label{sec:motiv}

\subsection{Motivation} \label{subsec:LNkde}

The very origin of the boundary issues of (\ref{eqn:convkde}) is easy to understand. The convolution of two probability densities $g_1 * g_2$ is known to be the density of the sum of two independent random variables having respective densities $g_1$ and $g_2$. Hence, smoothing is achieved in (\ref{eqn:convolkde}) through `diluting' each observation $X_k$ by {\it adding} to it some continuous random noise $\varepsilon$ with density $K_h$. This does not cause any inconsistency if $f$ is $\R$-supported, but it does if $f$ is $\R^+$-supported. Indeed, $X_k + \varepsilon$ may take on negative values, and will surely do so for the small $X_k$'s. This produces an estimated density $\hat{f}$ that `spills over'. In response to that, (\ref{eqn:logkde}) first sends the observations onto the whole $\R$ through the log-transformation, adds to them some random disturbance of mean 0 and variance $h^2$, and moves everything back to $\R^+$ by exponentiation. So, in terms of what happens on $\R^+$, (\ref{eqn:logkde}) realises smoothing by {\it multiplying} each observation $X_k$ by a positive random disturbance $\varepsilon$. In the particular case of (\ref{eqn:lognormkde}), $\varepsilon$ has a certain log-Normal distribution. 

\ppn In algebraic terms, the conventional estimator (\ref{eqn:convolkde})-(\ref{eqn:convkde}) is justified on $\R$ because $(\R,+)$ is a group. By contrast $(\R^+,+)$ is not, which causes issues for $\R^+$-supported densities. Now, $(\R^+,\times)$ {\it is} a group. Naturally, it is isomorphic to $(\R,+)$ through the $\log$ transformation, and this is what theoretically validates estimator (\ref{eqn:logkde}). However, the benefits of distorting $(\R^+,\times)$ to forcibly move back to $(\R,+)$ are not clear -- the presence of $f$ and $f'$ in the bias expression (\ref{eqn:asymptbiaslognormkde}) is precisely caused by that distortion (the bias of the conventional estimator (\ref{eqn:convkde}) only involves $f''$). It seems more natural to define a kernel estimator directly in $(\R^+,\times)$, which would address in a forthright manner the particular challenges arising in that environment.  

\ppn The estimator suggested in this paper will thus realise smoothing by {\it multiplying} each observation $X_k$ by a random disturbance $\varepsilon$ whose density is supported on $\R^+$, generalising by a large extent the log-Normal kernel density estimator (\ref{eqn:lognormkde}). It will heavily rely on properties of the Mellin transform, the ``{\it natural analytical tool to use in studying the distribution of products and quotients of independent random variables}'',  as \cite{Epstein48} described it. The next section briefly reviews the properties of that transform which will be useful in this paper.

\subsection{Mellin transform and Mellin convolution} \label{subsec:Mellin}

The {\it Mellin transform} of any locally integrable $\R^+$-supported function $f$ is the function defined on the complex plane $\C = \{z:z=c+i \omega; c,\omega \in \R\}$ as 
\begin{equation} \Ms(f;z) = \int_0^\infty x^{z-1} f(x) \,dx, \label{eqn:Mellindef} \end{equation} 
when the integral converges. The change of variable $x = e^{-u}$ shows that the Mellin transform is directly related to the Laplace and Fourier transforms and suggests that it is, in some sense, equivalent to them. This is only partly true, as \cite{Butzer14} noted. There are many situations where it appears more convenient to stick to the Mellin form, and the problem studied here is surely one of those. If, for some $\delta > 0$ and $a < b$, 
\begin{equation} f(x)= O(x^{-(a-\delta)}) \text{ as } x \to 0^+ \text{ and } f(x)=O(x^{-(b+\delta)}) \text{ as } x \to +\infty, \label{eqn:Sfdef} \end{equation}
then (\ref{eqn:Mellindef}) converges absolutely on the vertical strip of the complex plane $ \Ss_f =\{z \in \C: a < \Re(z) < b\}$.  It can be shown that $\Ms(f;\cdot)$ is holomorphic on $\Ss_f$ -- therefore known as the {\it strip of holomorphy} of $\Ms(f;\cdot)$ -- and uniformly bounded on any closed vertical strip of finite width entirely contained in $\Ss_f$. There is a one-to-one correspondence between the function $f$ and {\it the couple} $(\Ms(f;\cdot),\Ss_f)$, in the sense that two different functions may have the same Mellin transform, but defined on two non-overlapping vertical strips of the complex plane. For instance, the function $e^{-x}$ has, by definition, Euler's Gamma function $\Gamma(z)$ as Mellin transform for $\Re(z)>0$, but for any real nonnegative integer $n$, the restriction of $\Gamma(z)$ to $\{z \in \C: -(n+1) < \Re(z) < -n\}$ is actually the Mellin transform of 
\begin{equation} f_n(x) \doteq e^{-x} + \sum_{k=0}^n (-1)^{k+1} \frac{x^k}{k!}, \label{eqn:CSGam} \end{equation}
by the {\it Cauchy-Saalsch\"utz representation} of $\Gamma(z)$ \citep[Section 3.2.2]{Temme96}. It is thus equivalent to know $f$ or $\Ms(f;\cdot)$ {\it in a given vertical strip of $\C$}. In particular, $f$ can be recovered from $\Ms(f;\cdot)$ by the {\it inverse Mellin transform}: for all $ x>0$,
\begin{equation} f(x) = \frac{1}{2\pi i} \int_{\Re(z)=c} x^{-z} \Ms(f;z) \,dz, \label{eqn:invMellin} \end{equation}
for any real $c \in \Ss_f$. From Cauchy's residue theorem, the integration path can be displaced sideways inside $\Ss_f$ without affecting the result of integration, so the value of (\ref{eqn:invMellin}) is independent of the particular constant $c \in \Ss_f$, and the integral is absolutely convergent. See \citet[Chapter 4]{Sneddon74}, \citet[Chapter 3]{Wong01}, \citet[Chapter 3]{Paris01}, \citet[Chapter 6]{Graf10} or \citet[Section VIII.13]{Godement15} for comprehensive treatments of the Mellin transform. A exhaustive table of Mellin transforms and inverse Mellin transforms is provided in \citet[Chapters VI and VII]{Bateman54}.

\ppn Now, if $f$ is the probability density of a positive random variable $X$, then 
\begin{equation} \Ms(f;1) = \int_0^\infty f(x)\,dx = 1.  \label{eqn:Mell1} \end{equation}
Hence the line $\{z \in \C:\Re(z) = 1\}$ always belongs to the strip of holomorphy of Mellin transforms of all probability density functions supported on $\R^+$. So there will never be any ambiguity about the strip of holomorphy here, which will allow $f$ to be unequivocally represented by its Mellin transform $\Ms(f;\cdot)$, and vice-versa. In the above example, $f(x)=e^{-x} \indic{x>0}$ is the only probability density with Mellin transform $\Gamma(z)$, as that is the only $f$ such that $\Ms(f;z) =\Gamma(z)$ {\it and} $\Ss_f \ni 1$. This also makes clear the essential role of the value $z=1$ in this framework. 

\ppn From (\ref{eqn:Mellindef}), one clearly has
\begin{equation} \Ms(f;z) = \E(X^{z-1}), \label{eqn:Mellmoments} \end{equation}
thus $\Ms(f;\cdot)$ actually defines all real, complex, integral and fractional moments of $X$. Hence, for $f$ a probability density, the strip of holomorphy of $\Ms(f;\cdot)$ is actually determined by the existence (finiteness) of the real moments of $f$: 
\begin{equation} z \in \Ss_f  \iff \E(X^{\Re(z) - 1}) < \infty. \label{eqn:Sfmom} \end{equation}
In view of (\ref{eqn:Sfdef}), $b = \infty$ for light-tailed densities whose all positive moments exist, while $\Ss_f$ is bounded from the right ($1 < b < \infty$) for fat-tailed densities with only a certain number of finite positive moments.\footnote{The qualifiers `fat', `heavy' or `long'  have sometimes found different meanings in the literature when describing the tails of a distribution. In this paper, by `fat-tailed' distribution we mean explicitly this: a distribution whose not all positive power moments are finite. Hence here we consider the log-Normal as `light-tailed', although it is is generally regarded as `heavy-tailed' in many other references.} Similarly, $a = -\infty$ for densities whose all negative moments exist -- let us call such densities `{\it light-headed}', while $\Ss_f$ is bounded from the left ($-\infty<a < 1$) for `{\it fat-headed}' densities, for which some negative moments are infinite. 

\ppn From (\ref{eqn:Mellmoments}), and given that moments of products of independent random variables are the products of the individual moments, the density $g$ of the product of two independent positive random variables with respective densities $g_1$ and $g_2$, has Mellin transform 
\begin{equation} \Ms(g;z) = \Ms(g_1;z) \Ms(g_2;z). \label{eqn:MellProd} \end{equation}
From standard arguments, one also knows that
\begin{equation} g(x) = \int_{0}^\infty g_1\left(\frac{x}{v}\right)g_2(v)\,\frac{dv}{v}. \label{eqn:Mellinconv}
\end{equation}
This operation, that we will denote $g(x) = (g_1 \MC g_2)(x)$, is called {\it Mellin convolution}, owing to the equivalence
\begin{equation} g(x) = (g_1 \MC g_2)(x) \quad \iff \quad \Ms(g;z) = \Ms(g_1;z) \Ms(g_2;z) \quad \text{ for } z \in \Ss_{g_1} \cap \Ss_{g_2}. \label{eqn:prodMell} \end{equation}
Clearly, Mellin transform/Mellin convolution play the same role for products of independent random variables as Fourier transform/convolution for sums of independent random variables. 

\ppn Some important operational properties of the Mellin transform can be found in Appendix \ref{app:A}. Those are useful for easily obtaining the Mellin transform of common $\R^+$-supported probability densities.

\begin{example} \label{ex:Gamma} {\bf Gamma density:} Consider the Gamma$(\alpha,\beta)$-density, i.e.,
\begin{equation} f_\text{G}(x) = \frac{\beta^\alpha}{\Gamma(\alpha)}x^{\alpha-1} e^{-\beta x}, \qquad x > 0. \label{eqn:gamdens} \end{equation}
By definition, $\Ms(e^{-x};z) = \Gamma(z)$ ($\Re(z) > 0$). By (\ref{eqn:A2}), it follows $\Ms(e^{-\beta x};z)= \beta^{-z} \Gamma(z)$ ($\Re(z) > 0$), and by (\ref{eqn:A4}), $\Ms(x^{\alpha-1} e^{-\beta x};z) = \beta^{-(z+\alpha-1)}\Gamma(z+\alpha-1)$ ($\Re(z) > 1-\alpha$). Finally, by (\ref{eqn:A1}), we obtain
\begin{equation} \Ms(f_\text{G};z) = \frac{\beta^\alpha}{\Gamma(\alpha)}\beta^{-(z+\alpha-1)}\Gamma(z+\alpha-1) = \frac{1}{\beta^{z-1}}\frac{\Gamma(\alpha + z-1)}{\Gamma(\alpha)}, \qquad  \Re(z) > 1-\alpha. \label{eqn:gamMT} \end{equation}
The strip of holomorphy is bounded from the left, but not from the right: the Gamma density is fat-headed and light-tailed, it has all its positive moments but only those negative moments $\E(X^{-r})$ for $r < \alpha$.
\end{example}

\begin{example}{\bf Inverse Gamma density:} Consider the Inverse Gamma$(\alpha,\beta)$-density $f_\text{IG}$, i.e.\ the density of the random variable $Y = 1/X$ if $X$ has the above Gamma$(\alpha,\beta)$-distribution. Standard results on functions of random variables show that \begin{equation} f_\text{IG}(x) = \frac{1}{x^2}f_\text{G}\left(\frac{1}{x}\right), \label{eqn:fIGfG}\end{equation}
that is,
\begin{equation} f_\text{IG}(x) = \frac{\beta^\alpha}{\Gamma(\alpha)}x^{-\alpha-1} e^{-\beta/x}, \qquad x > 0. \label{eqn:invgamdens} \end{equation}
Combining (\ref{eqn:A3}) and (\ref{eqn:A4}), it follows directly from (\ref{eqn:fIGfG}) that $\Ms(f_\text{IG};z) = \Ms(f_\text{G};2-z)$. The Mellin transform of (\ref{eqn:invgamdens}) is thus
\begin{equation} \Ms(f_\text{IG};z) = \frac{1}{\beta^{1-z}}\frac{\Gamma(\alpha + 1-z)}{\Gamma(\alpha)}, \qquad \Re(z) < 1+ \alpha. \label{eqn:invgamMT} \end{equation}
Its strip of holomorphy is bounded from the right, but not from the left. The Inverse Gamma distribution is light-headed and fat-tailed, as expected from its definition and Example \ref{ex:Gamma}.
\end{example}

\ppn The Gamma and Inverse Gamma densities are just two very particular cases of a huge class of $\R^+$-supported densities which have Mellin transforms of similar tractable form, viz.\ (rescaled) ratios of Gamma functions. Given that the whole kernel density estimation methodology proposed in this paper will rely on Mellin-convolution ideas, those densities are the natural candidates for acting as `kernel' in our framework. We define such densities, called {\it Meijer densities}, in Section \ref{subsec:Meijerdens}, after a brief review of Meijer's $G$-functions.

\subsection{Meijer's $G$-functions} \label{sec:MeijerGkern}

\cite{Meijer36} introduced the $G$-functions as generalisations of the hypergeometric functions. There are three types of $G$-functions, one of them being the following {\it Barnes integral}: for $x >0$,
\[G_{p,q}^{m,n}\left(x \left| \substack{a_1,\ldots,a_p \\ b_1,\ldots,b_q }\right.  \right) = \frac{1}{2\pi i} \int_{\Re(z)=c'} \frac{\prod_{j=1}^m \Gamma(b_j -z) \prod_{j=1}^n \Gamma(1 - a_j +z)}{\prod_{j=m+1}^q \Gamma(1-b_j + z) \prod_{j=n+1}^p \Gamma(a_j -z)} x^{z}  \,dz, \] 
for $0 \leq m \leq p$ and $0 \leq n \leq q$ ($m,n,p,q$ are natural numbers), and for some constant $c'$ ensuring the existence of the integral. Through the change of variable $z \to -z$, this is also
\begin{equation} G_{p,q}^{m,n}\left(x \left| \substack{a_1,\ldots,a_p \\ b_1,\ldots,b_q }\right.  \right) = \frac{1}{2\pi i} \int_{\Re(z)=c} \frac{\prod_{j=1}^m \Gamma(b_j +z) \prod_{j=1}^n \Gamma(1 - a_j -z)}{\prod_{j=m+1}^q \Gamma(1-b_j - z) \prod_{j=n+1}^p \Gamma(a_j +z)} x^{-z}  \,dz, \label{eqn:Gfunc} \end{equation} 
with $c=-c'$, which shows by (\ref{eqn:invMellin}) that $G_{p,q}^{m,n}\left(x \left| \substack{a_1,\ldots,a_p \\ b_1,\ldots,b_q }\right.  \right)$ has Mellin transform
\begin{equation} \frac{\prod_{j=1}^m \Gamma(b_j +z) \prod_{j=1}^n \Gamma(1 - a_j -z)}{\prod_{j=m+1}^q \Gamma(1-b_j - z) \prod_{j=n+1}^p \Gamma(a_j +z)} \label{eqn:fullMeijer} \end{equation}
on some strip of the complex plane containing $\{z \in \C: \Re(z) =c\}$. In particular, the $G_{1,1}^{1,1}\left(x \left| \substack{a_1 \\ b_1}\right.  \right)$-function has Mellin transform 
\begin{equation} 
 \Ms\left(G_{1,1}^{1,1}\left(\cdot \left| \substack{a_1 \\ b_1}\right.  \right);z\right) = \Gamma(b_1+z)\Gamma(1-a_1-z) \label{eqn:MTG11}
\end{equation}
on the strip $\{z \in \C: -b_1<\Re(z)<1-a_1 \}$ (provided $a_1-b_1<1$). The $G$-functions are very general functions whose particular cases cover most of the common, useful or special functions defined on $\R^+$. See \citet[Section 5.3]{Erdelyi53} or \cite{Mathai73} for more details, or \cite{Beals13} for a recent short review.

\subsection{Meijer densities} \label{subsec:Meijerdens}

For some $\nu,\gamma,\xi > 0$ and $\theta \in (0,\pi/2)$, consider the $\R^+$-supported function $L_{\nu,\gamma,\xi,\theta}$ whose Mellin transform is
\begin{equation} \Ms(L_{\nu,\gamma,\xi,\theta};z) = \nu^{z-1} \left(\frac{1}{\tan^2 \theta} \right)^{\xi(z-1)}  \frac{\Gamma\left(\frac{\xi^2}{\gamma^2 \cos^2 \theta}+\xi(z-1) \right)\Gamma\left(\frac{\xi^2}{\gamma^2 \sin^2 \theta}+\xi(1-z) \right)}{\Gamma\left(\frac{\xi^2}{\gamma^2 \cos^2 \theta}\right)\Gamma\left(\frac{\xi^2}{\gamma^2 \sin^2 \theta}\right)}\label{eqn:MTkern} \end{equation}
on the strip of holomorphy 
\begin{equation} \Ss_{L_{\nu,\gamma,\xi,\theta}} = \left\{z \in \C: 1-\frac{\xi}{\gamma^2 \cos^2 \theta} < \Re(z) < 1+\frac{\xi}{\gamma^2 \sin^2 \theta}\right\}. \label{eqn:SL}  \end{equation} 
Clearly, for all $\nu,\gamma,\xi > 0$ and $\theta \in (0,\pi/2)$, $\Ms(L_{\nu,\gamma,\xi,\theta};1) =1$, hence $\int_0^\infty L_{\nu,\gamma,\xi,\theta}(x)\,dx = 1$, by (\ref{eqn:Mell1}). In fact, the following result shows that $L_{\nu,\gamma,\xi,\theta}$ is always a valid probability density.

\begin{proposition} \label{prop:Meijerkern} For all $\nu,\gamma,\xi > 0$ and $\theta \in (0,\pi/2)$, the $\R^+$-supported function $L_{\nu,\gamma,\xi,\theta}$ whose Mellin transform is (\ref{eqn:MTkern}) on the strip of holomorphy $\Ss_{L_{\nu,\gamma,\xi,\theta}}$ (\ref{eqn:SL}), is the density of the random variable $Y = \nu X^\xi$, where $X$ follows the Fisher-Snedecor $F$-distribution with $\frac{2\xi^2}{\gamma^2 \cos^2 \theta}$ and $\frac{2\xi^2}{\gamma^2 \sin^2 \theta}$ degrees of freedom, i.e.\ $X \sim F\left(\frac{2\xi^2}{\gamma^2 \cos^2 \theta},\frac{2\xi^2}{\gamma^2 \sin^2 \theta}\right)$.
\end{proposition}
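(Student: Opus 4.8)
The plan is to compute directly the Mellin transform of the density $f_Y$ of $Y=\nu X^\xi$ and to show it coincides with (\ref{eqn:MTkern}) on the strip (\ref{eqn:SL}); the conclusion then follows from the one-to-one correspondence between a probability density and the couple $(\Ms(f;\cdot),\Ss_f)$. Since the strip (\ref{eqn:SL}) contains the line $\Re(z)=1$, no ambiguity can arise, so this identity of Mellin transforms pins down $L_{\nu,\gamma,\xi,\theta}$ uniquely. Moreover, because $f_Y$ is by construction the genuine density of the well-defined positive random variable $Y=\nu X^\xi$, exhibiting this identity simultaneously proves that $L_{\nu,\gamma,\xi,\theta}$ is a \emph{bona fide} probability density.

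First I would use (\ref{eqn:Mellmoments}) to reduce the problem to fractional moments. Writing $d_1=\frac{2\xi^2}{\gamma^2\cos^2\theta}$ and $d_2=\frac{2\xi^2}{\gamma^2\sin^2\theta}$ for the two degrees of freedom,
\[
\Ms(f_Y;z)=\E(Y^{z-1})=\E\big((\nu X^\xi)^{z-1}\big)=\nu^{z-1}\,\E\big(X^{\xi(z-1)}\big),
\]
so everything reduces to the fractional moments $\E(X^s)$ of the $F(d_1,d_2)$-distribution evaluated at $s=\xi(z-1)$, with the scalar $\nu^{z-1}$ carried along.

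The one genuine computation is therefore the fractional moments of the $F$-distribution, which I would obtain from the representation $X=\frac{d_2\,U}{d_1\,V}$ with $U\sim\chi^2_{d_1}$ and $V\sim\chi^2_{d_2}$ independent. Since a $\chi^2_k$ variable is a Gamma$(k/2,1/2)$ variable, the Mellin transform of Example \ref{ex:Gamma}, equation (\ref{eqn:gamMT}), yields $\E(U^s)=2^s\Gamma(d_1/2+s)/\Gamma(d_1/2)$ for $\Re(s)>-d_1/2$ and, replacing $s$ by $-s$, $\E(V^{-s})=2^{-s}\Gamma(d_2/2-s)/\Gamma(d_2/2)$ for $\Re(s)<d_2/2$. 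By independence the factors $2^{\pm s}$ cancel and
\[
\E(X^s)=\left(\frac{d_2}{d_1}\right)^{s}\frac{\Gamma(d_1/2+s)\,\Gamma(d_2/2-s)}{\Gamma(d_1/2)\,\Gamma(d_2/2)},\qquad -\tfrac{d_1}{2}<\Re(s)<\tfrac{d_2}{2}.
\]

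It then remains to substitute and match term by term. Setting $s=\xi(z-1)$ and using $d_2/d_1=\cos^2\theta/\sin^2\theta=1/\tan^2\theta$, together with $d_1/2=\xi^2/(\gamma^2\cos^2\theta)$ and $d_2/2=\xi^2/(\gamma^2\sin^2\theta)$, reproduces (\ref{eqn:MTkern}) exactly, noting in particular that $\Gamma(d_2/2-\xi(z-1))=\Gamma\big(\xi^2/(\gamma^2\sin^2\theta)+\xi(1-z)\big)$. Finally, the moment-existence range $-d_1/2<\Re(s)<d_2/2$ translates, after dividing by $\xi>0$, into $1-\frac{\xi}{\gamma^2\cos^2\theta}<\Re(z)<1+\frac{\xi}{\gamma^2\sin^2\theta}$, which is precisely (\ref{eqn:SL}). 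I do not anticipate any deep obstacle; the only points demanding care are the bookkeeping in the $F$-moment computation (equivalently, justifying the underlying Gamma integrals via the chi-squared representation) and verifying that the strip of holomorphy produced by the moment condition matches (\ref{eqn:SL}) on the nose, rather than merely containing the line $\Re(z)=1$.
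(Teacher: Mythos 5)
Your proof is correct and follows essentially the same route as the paper: both arguments rest on representing an $F$-distributed variable as a rescaled ratio of two independent Gamma (equivalently, chi-squared) variables, multiplying the corresponding Gamma Mellin factors by independence, and then transferring to $Y=\nu X^\xi$ by the power-scaling rule --- your substitution $s=\xi(z-1)$ is exactly the paper's Lemma \ref{prop:scalpow} written in moment form via (\ref{eqn:Mellmoments}). The only difference is presentational: you compute the Mellin transform of the density of $\nu X^\xi$ and match it to (\ref{eqn:MTkern}) with its strip (\ref{eqn:SL}), whereas the paper first identifies (\ref{eqn:MTkern}) at $\nu=\xi=1$ with the $F$-density's transform and then invokes its scaling lemma, so there is nothing to flag.
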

\begin{proof} See Appendix. \end{proof}

The proof uses the representation of an $F$-distributed random variable as a (rescaled) product of two independent Gamma and Inverse Gamma random variables. This allows us to extend (\ref{eqn:MTkern}) to the cases $\theta = 0$ and $\theta = \pi/2$ as well. Set 
\begin{align} \Ms(L_{\nu,\gamma,\xi,0};z) & = \frac{\nu^{z-1}}{\Gamma\left(\frac{\xi^2}{\gamma^2}\right)} \left(\frac{\gamma^2}{\xi^2} \right)^{\xi(z-1)}  \Gamma\left(\frac{\xi^2}{\gamma^2 }+\xi(z-1) \right), \qquad \Re(z) > 1-\frac{\xi}{\gamma^2}, \label{eqn:MTGamkern} \\ 
\text{ and } \quad  \Ms(L_{\nu,\gamma,\xi,\pi/2};z) & = \frac{\nu^{z-1}}{\Gamma\left(\frac{\xi^2}{\gamma^2}\right)} \left(\frac{\gamma^2}{\xi^2} \right)^{\xi(1-z)}  \Gamma\left(\frac{\xi^2}{\gamma^2 }+\xi(1-z) \right), \qquad \Re(z) < 1+\frac{\xi}{\gamma^2}. \label{eqn:MTInvGamkern}\end{align}
Lemma \ref{prop:scalpow} in Appendix, (\ref{eqn:gamMT}) and (\ref{eqn:invgamMT}) make $L_{\nu,\gamma,\xi,0}$ the density of $\nu X^\xi$ for $X \sim \text{Gamma}\left(\frac{\xi^2}{\gamma^2},\frac{\xi^2}{\gamma^2} \right)$ and $L_{\nu,\gamma,\xi,\pi/2}$ the density of $\nu X^\xi$ for $X \sim \text{InvGamma}\left(\frac{\xi^2}{\gamma^2},\frac{\xi^2}{\gamma^2} \right)$, in agreement with the usual interpretation of the $F$-distribution with infinite (either numerator or denominator) degrees of freedom. 

\ppn The strip of holomorphy $\Ss_{L_{\nu,\gamma,\xi,\theta}}$ (\ref{eqn:SL}) clarifies how the parameters $\gamma$, $\xi$ and $\theta$ act on the lightness/fatness of the head and the tail of the density $L_{\nu,\gamma,\xi,\theta}$. The ratio $\gamma^2/\xi$ fixes the `overall fatness' of $L_{\nu,\gamma,\xi,\theta}$: the higher the value of $\gamma^2/\xi$, the fatter both its head and its tail. How exactly that overall fatness is shared between the head and the tail of $L_{\nu,\gamma,\xi,\theta}$ is specified by $\theta$: $\theta = 0$ produces a density with as light a tail and as fat a head as can be (given the other parameters), while $\theta = \pi/2$ produces a density with as fat a tail and as light a head as can be (given the other parameters). The balanced case is, of course, for $\theta = \pi/4$. Thus, by playing with the values of $\gamma$, $\xi$ and $\theta$, one can produce a wide variety of different head and tail behaviours for $L_{\nu,\gamma,\xi,\theta}$. Those include exponential behaviours ($\theta = 0$ or $\theta = \pi/2$, or $\xi \to \infty$), and positiveness/unboundedness at $x=0$, for $\xi < \gamma^2 \cos^2\theta$ (see that then, $z = 0 \notin \Ss_f$, meaning that $\E(X^{-1}) = \infty$ and $f(x)$ cannot be $o(1)$ as $x \to 0$).

\ppn 
We call a probability density whose Mellin transform can be written under the form (\ref{eqn:MTkern}) (or its simplified versions (\ref{eqn:MTGamkern}) and (\ref{eqn:MTInvGamkern})) for some $\nu,\gamma,\xi > 0$ and $\theta \in [0,\pi/2]$, a {\it Meijer density}. Indeed, all densities defined through (\ref{eqn:MTkern}), having for Mellin transform a rescaled product of two Gamma functions as in (\ref{eqn:MTG11}), are rescaled versions of a $G_{1,1}^{1,1}$-Meijer function. Specifically, for $\theta \in (0,\pi/2)$, it can be checked that 
\begin{align} L_{\nu,\gamma,\xi,\theta}(x) & = \frac{1}{\nu \xi}\frac{1}{\Gamma\left(\frac{\xi^2}{\gamma^2 \cos^2 \theta} \right)\Gamma\left(\frac{\xi^2}{\gamma^2 \sin^2 \theta} \right)}\left(\tan^2 \theta \right)^\xi G_{1,1}^{1,1}\left(\tan^2 \theta \left(\frac{x}{\nu}\right)^{1/\xi} \left| \substack{-\frac{\xi^2}{\gamma^2 \sin^2 \theta}+1-\xi \\  \frac{\xi^2}{\gamma^2 \cos^2 \theta}-\xi}\right.  \right),  \label{eqn:G11} 
\end{align}
giving explicit forms for those densities in terms of $G_{1,1}^{1,1}$. Similar, appropriately simplified expressions in terms of $G_{1,0}^{0,1}$ and $G_{0,1}^{1,0}$ are valid for $\theta \in \{0,\pi/2\}$ as well. Most of the $\R^+$-supported probability distributions of practical interest are actually Meijer distributions. These include, but are not limited to, the Amoroso/Stacy (i.e., Generalised Gamma), Beta prime, Burr, Chi, Chi-squared, Dagum, Erlang, Fisher-Snedecor, Fr\'echet, Gamma, Generalised Pareto, L\'evy, Log-logistic, Maxwell, Nakagami, Rayleigh, Singh-Maddala and Weibull distributions; see Table \ref{tab:meijerdistr} in Appendix \ref{app:meijerdistr}. All the `inverse' distributions of these, such as the Inverse Gamma, are also Meijer distributions, as it appears clearly from (\ref{eqn:MTkern}) and Corollary \ref{cor:inv} that the class of Meijer distributions is closed under the `inverse' operation. Finally, they admit the log-Normal distribution as limiting case as $\xi \to \infty$. In fact, $L_{\nu,\gamma,\xi,\theta}$ being essentially the density of a certain power of an $F$-distributed random variable, it has strong links to what has sometimes been called the {\it Generalised $F$-distribution} in the literature \citep{Prentice75,McDonal84,Cox08}. 

\begin{remark} It is possible to define probability densities as appropriate rescaled versions of general $G_{p,q}^{m,n}$-functions (\ref{eqn:Gfunc}) for $p+q > 2$ as well. Their Mellin transform would involve a ratio of $p+q$ Gamma factors, as in the full form (\ref{eqn:fullMeijer}). Given the richness of the class of densities defined through $G_{1,1}^{1,1}$, though, the form (\ref{eqn:MTkern}) seems sufficient for many purposes.
\end{remark}

\ppn From (\ref{eqn:MTkern}), the basic properties of densities (\ref{eqn:G11}) are actually easy to obtain. In particular, the coefficient of variation of $L_{\nu,\gamma,\xi,\theta}$ will be of interest in the next sections.

\begin{proposition} \label{prop:momkern} If $\xi > 2\gamma^2 \sin^2 \theta$ and $\theta \in (0,\pi/2)$, the density $L_{\nu,\gamma,\xi,\theta}$ has coefficient of variation $\chi$ equal to
\begin{equation} \chi  = \sqrt{\frac{\Gamma\left(\frac{\xi^2}{\gamma^2 \cos^2 \theta}\right)\Gamma\left(\frac{\xi^2}{\gamma^2 \sin^2 \theta}\right)\Gamma\left(\frac{\xi^2}{\gamma^2 \cos^2 \theta}+2\xi\right)\Gamma\left(\frac{\xi^2}{\gamma^2 \sin^2 \theta}-2\xi\right)}{\Gamma^2\left(\frac{\xi^2}{\gamma^2 \cos^2 \theta}+\xi\right)\Gamma^2\left(\frac{\xi^2}{\gamma^2 \sin^2 \theta}-\xi\right)}-1}. \label{eqn:fullgamma} \end{equation}
\end{proposition}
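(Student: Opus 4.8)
The plan is to exploit the fact, recorded in (\ref{eqn:Mellmoments}), that the Mellin transform of a density evaluated at $z$ returns the $(z-1)$-th moment of the associated random variable. Since $L_{\nu,\gamma,\xi,\theta}$ is the density of $Y = \nu X^\xi$ (Proposition \ref{prop:Meijerkern}), we have $\E(Y^{z-1}) = \Ms(L_{\nu,\gamma,\xi,\theta};z)$, so the first two raw moments are simply $\E(Y) = \Ms(L_{\nu,\gamma,\xi,\theta};2)$ and $\E(Y^2) = \Ms(L_{\nu,\gamma,\xi,\theta};3)$, obtained by substituting $z=2$ and $z=3$ into the closed form (\ref{eqn:MTkern}). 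The coefficient of variation then follows from $\chi^2 = \var(Y)/\E(Y)^2 = \E(Y^2)/\E(Y)^2 - 1$.

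Before substituting, I would verify that these moments are finite, i.e.\ that $z=2$ and $z=3$ lie in the strip of holomorphy $\Ss_{L_{\nu,\gamma,\xi,\theta}}$ given in (\ref{eqn:SL}); by the equivalence (\ref{eqn:Sfmom}) this is exactly the requirement that the corresponding positive moments exist. The right endpoint of the strip is $1 + \xi/(\gamma^2\sin^2\theta)$, so $z=3$ belongs to $\Ss_{L_{\nu,\gamma,\xi,\theta}}$ precisely when $3 < 1 + \xi/(\gamma^2\sin^2\theta)$, that is $\xi > 2\gamma^2\sin^2\theta$ -- which is the hypothesis of the proposition (and it automatically covers $z=2$, since the second moment is the more demanding). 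The left endpoint poses no constraint, as both $z=2$ and $z=3$ exceed $1$.

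Writing $A = \xi^2/(\gamma^2\cos^2\theta)$ and $B = \xi^2/(\gamma^2\sin^2\theta)$ for brevity, substitution gives $\E(Y) = \nu(\tan^2\theta)^{-\xi}\,\Gamma(A+\xi)\Gamma(B-\xi)/[\Gamma(A)\Gamma(B)]$ and $\E(Y^2) = \nu^2(\tan^2\theta)^{-2\xi}\,\Gamma(A+2\xi)\Gamma(B-2\xi)/[\Gamma(A)\Gamma(B)]$. Forming the ratio $\E(Y^2)/\E(Y)^2$, the scale prefactors $\nu$ and $(\tan^2\theta)^{-\xi}$ cancel cleanly (the square of the first-moment prefactor matches the second-moment prefactor), as does one copy of the normalising denominator $\Gamma(A)\Gamma(B)$, leaving $\Gamma(A)\Gamma(B)\Gamma(A+2\xi)\Gamma(B-2\xi)/[\Gamma^2(A+\xi)\Gamma^2(B-\xi)]$. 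Subtracting $1$ and taking the square root recovers (\ref{eqn:fullgamma}) after restoring $A$ and $B$.

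There is no genuine obstacle here: the result is a direct evaluation of the Mellin transform at two integer points, and the whole content is the clean cancellation of the $\nu$ and $\tan^2\theta$ factors together with one pair of Gamma normalisers. The only point requiring a little care is the moment-existence check tying the hypothesis $\xi > 2\gamma^2\sin^2\theta$ to membership of $z=3$ in the strip (\ref{eqn:SL}); everything else is bookkeeping of the Gamma arguments.
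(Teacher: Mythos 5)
Your proposal is correct and follows essentially the same route as the paper's own proof: evaluate the Mellin transform (\ref{eqn:MTkern}) at $z=2$ and $z=3$ to get the first two moments, verify via the strip of holomorphy (\ref{eqn:SL}) that the hypothesis $\xi > 2\gamma^2\sin^2\theta$ is exactly what places $z=3$ (and hence $z=2$) inside the strip, and then form the coefficient of variation. The only cosmetic difference is that the paper writes $\chi = \sigma/\mu$ with $\sigma = \sqrt{\Ms(L_{\nu,\gamma,\xi,\theta};3)-\Ms^2(L_{\nu,\gamma,\xi,\theta};2)}$, whereas you use the algebraically identical $\chi^2 = \E(Y^2)/\E(Y)^2 - 1$.
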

\begin{proof}  See Appendix. \end{proof}

According to the above observations, if $\theta = 0$ (resp.\ $\theta=\pi/2$) then the factors containing $\sin^2 \theta$ (resp.\ $\cos^2 \theta$) would not appear in (\ref{eqn:fullgamma}). Also, the parameter $\nu$ does not affect the value of $\chi$, which was expected given the purely multiplicative nature of its role as described by Proposition \ref{prop:Meijerkern}.
For $\xi \in \N$, (\ref{eqn:fullgamma}) can be simplified using the recursion formula $\Gamma(\alpha+1) = \alpha\Gamma(\alpha)$ ($-\alpha \notin \N$). For instance, for $\xi = 1$ one finds
\begin{equation} \chi = \gamma \,\sqrt{\frac{1-\gamma^2 \sin^2 \theta \cos^2 \theta}{1-2 \gamma^2 \sin^2 \theta}}. \label{eqn:chigamma1}\end{equation}
Clearly, in this case, $\chi \sim \gamma$ as $\gamma \to 0$. In fact, this asymptotic equivalence holds true in general. 
\begin{proposition} \label{lem:asympCV} Let $\nu,\xi >0$ and $\theta \in [0,\pi/2]$ be fixed, and let $\gamma \to 0$. Then, the coefficient of variation $\chi$ of $L_{\nu,\gamma,\xi,\theta}$ is asymptotically equivalent to $\gamma$: $\chi \sim \gamma$, as $\gamma \to 0$. 
\end{proposition}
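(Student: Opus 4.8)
The plan is to work directly from the closed-form coefficient of variation (\ref{eqn:fullgamma}) obtained in Proposition \ref{prop:momkern}, and to reduce the claim $\chi \sim \gamma$ to showing that $\chi^2 = R - 1 \sim \gamma^2$, where $R$ denotes the ratio of Gamma functions appearing under the square root. Writing $a = \frac{\xi^2}{\gamma^2\cos^2\theta}$ and $b = \frac{\xi^2}{\gamma^2\sin^2\theta}$, I would factor $R$ as
\[ R = \frac{\Gamma(a)\,\Gamma(a+2\xi)}{\Gamma^2(a+\xi)}\cdot\frac{\Gamma(b)\,\Gamma(b-2\xi)}{\Gamma^2(b-\xi)}, \]
noting that, for fixed $\nu,\xi>0$ and $\theta\in(0,\pi/2)$, both $a\to+\infty$ and $b\to+\infty$ as $\gamma\to 0$ (and that the constraint $\xi>2\gamma^2\sin^2\theta$ of Proposition \ref{prop:momkern} is automatically met for $\gamma$ small). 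Thus the whole problem becomes one of estimating the two Gamma-ratio factors in the regime where their leading argument diverges.

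The key step is the asymptotic identity, for fixed $c\in\R$ and $z\to+\infty$,
\[ \frac{\Gamma(z)\,\Gamma(z+2c)}{\Gamma^2(z+c)} = 1 + \frac{c^2}{z} + O\!\left(z^{-2}\right), \]
which I would establish by taking logarithms and using the large-$z$ expansion of $\log\Gamma$ in terms of the digamma function $\psi$ and its derivatives. Writing $g(t)=\log\Gamma(z+t)$, the left-hand side is $\exp\{g(0)+g(2c)-2g(c)\}$, and this symmetric second difference annihilates the $\psi(z)$ term and leaves $c^2\psi'(z)+O(\psi''(z))$; since $\psi'(z)=z^{-1}+O(z^{-2})$ and $\psi''(z)=O(z^{-2})$, the exponent is $c^2/z+O(z^{-2})$, and exponentiating gives the stated expansion (alternatively, this is immediate from the standard ratio-of-Gammas asymptotics). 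Applying it with $z=a$, $c=\xi$ to the first factor yields $1+\xi^2/a+O(a^{-2})=1+\gamma^2\cos^2\theta+O(\gamma^4)$, and with $z=b$, $c=-\xi$ to the second yields $1+\xi^2/b+O(b^{-2})=1+\gamma^2\sin^2\theta+O(\gamma^4)$.

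Multiplying the two factors gives $R = 1 + \gamma^2(\cos^2\theta+\sin^2\theta)+O(\gamma^4)=1+\gamma^2+o(\gamma^2)$, whence $\chi^2=R-1\sim\gamma^2$ and therefore $\chi\sim\gamma$. The boundary cases $\theta\in\{0,\pi/2\}$ follow identically from the one-factor forms associated with (\ref{eqn:MTGamkern}) and (\ref{eqn:MTInvGamkern}): only the surviving factor contributes, and there $\cos^2\theta$ or $\sin^2\theta$ equals $1$, so the conclusion is unchanged. The main obstacle is one of rigour rather than of idea: both factors must be controlled to relative order $\gamma^4$ for the product expansion to be legitimate, so I would need the error in the Gamma-ratio expansion to be genuinely $O(z^{-2})$ (not merely $o(z^{-1})$); this is guaranteed by the fact that the $\log\Gamma$ expansion holds as an asymptotic series with controlled remainder, which makes the informal digamma computation above precise.
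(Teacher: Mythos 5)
Your proposal is correct and follows essentially the same route as the paper's own proof: both start from the closed form (\ref{eqn:fullgamma}), expand the two Gamma-ratio factors as their large arguments $\xi^2/(\gamma^2\cos^2\theta)$ and $\xi^2/(\gamma^2\sin^2\theta)$ tend to infinity, multiply to obtain $1+\gamma^2+O(\gamma^4)$ under the square root, and conclude $\chi \sim \gamma$. The only differences are in implementation: the paper applies the Tricomi--Erd\'elyi expansion (\ref{eqn:tricomerdel}) to each ratio separately (so that the $t^{\pm\xi}$ powers and first-order coefficients cancel upon multiplication), whereas you get the same second-order expansion structurally via the symmetric second difference of $\log\Gamma$ (the digamma term cancels by symmetry), and you additionally cover the boundary cases $\theta\in\{0,\pi/2\}$, which the paper's proof explicitly omits.
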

\begin{proof}  See Appendix. \end{proof}

\ppn The following asymptotic expansion of (\ref{eqn:MTkern}) as $\gamma \to 0$ will also be of importance in the next sections.

\begin{proposition} \label{prop:asympMell} Consider the Meijer density $L_{\nu,\gamma,\xi,\theta}$ as described above. Fix $\xi>0$ and $\theta \in [0,\pi/2]$. Let $\gamma \to 0$ and $\nu = 1 + \Delta \gamma^2$ for some real constant $\Delta$. Then, the Mellin transform of $L_{\nu,\gamma,\xi,\theta}$ (\ref{eqn:MTkern}) admits the following expansion:
\begin{equation} \Ms(L_{\nu,\gamma,\xi,\theta};z) = 1 + \frac{\gamma^2}{2}\,(z-1) \left(z-1 -\frac{\cos 2\theta}{\xi} +2\Delta \right) + \rho(\gamma,z), \label{eqn:MLasymp} \end{equation}
where $|\rho(\gamma,z)| =O(\gamma^4 (1+|z-1|)^2)$, provided $|z-1|=o(\gamma^{-2})$. 
\end{proposition}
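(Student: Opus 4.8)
The plan is to work entirely on the level of Mellin transforms and reduce everything to the classical asymptotics of a ratio of Gamma functions. Writing $w = z-1$ and abbreviating the two large Gamma arguments by $a = \frac{\xi^2}{\gamma^2\cos^2\theta}$ and $b = \frac{\xi^2}{\gamma^2\sin^2\theta}$, both of which diverge like $\gamma^{-2}$ as $\gamma\to 0$, expression (\ref{eqn:MTkern}) reads
\[ \Ms(L_{\nu,\gamma,\xi,\theta};z) = \nu^{w}\,(\cot^2\theta)^{\xi w}\,\frac{\Gamma(a+\xi w)}{\Gamma(a)}\,\frac{\Gamma(b-\xi w)}{\Gamma(b)}. \]
The workhorse is the standard expansion $\frac{\Gamma(A+s)}{\Gamma(A)} = A^{s}\bigl(1 + \frac{s(s-1)}{2A} + O(A^{-2})\bigr)$ as $A\to\infty$, which I would derive from the Taylor expansion of $\log\Gamma(A+s)$ about $A$ together with $\psi(A) = \log A - \frac{1}{2A} + O(A^{-2})$, $\psi'(A) = A^{-1} + O(A^{-2})$ and $\psi^{(k)}(A) = O(A^{-k})$. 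The hypothesis $|z-1| = o(\gamma^{-2})$ is precisely what guarantees that the shift-to-argument ratios $\xi w/a$ and $\xi w/b$ tend to $0$, so that this expansion is legitimate here.

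First I would apply this to each Gamma ratio with $s = \pm\xi w$. The pleasant structural fact, and the step that makes the statement clean, is that the ``power'' contributions conspire to cancel: the factor $a^{\xi w}b^{-\xi w}$ produced by the two ratios equals $(\tan^2\theta)^{\xi w}$, which is annihilated by the prefactor $(\cot^2\theta)^{\xi w}$, so that all powers of $\gamma$ and of $\tan\theta$ disappear. The only surviving contribution of the prefactors is then $\nu^{w} = (1+\Delta\gamma^2)^{w} = 1 + w\Delta\gamma^2 + O(\gamma^4 w^2)$, which is exactly where the constant $\Delta$ enters the final formula.

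Next I would collect the $O(\gamma^2)$ terms. The two correction factors contribute $\frac{\xi w(\xi w - 1)}{2a} = \frac{\gamma^2\cos^2\theta}{2\xi}\,w(\xi w - 1)$ and $\frac{\xi w(\xi w + 1)}{2b} = \frac{\gamma^2\sin^2\theta}{2\xi}\,w(\xi w + 1)$; adding these to $w\Delta\gamma^2$ and using $\cos^2\theta + \sin^2\theta = 1$ together with $\sin^2\theta - \cos^2\theta = -\cos 2\theta$ collapses the bracket to $\xi w - \cos 2\theta + 2\xi\Delta$, yielding exactly $\frac{\gamma^2}{2}(z-1)\bigl(z-1 - \frac{\cos 2\theta}{\xi} + 2\Delta\bigr)$. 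The boundary cases $\theta\in\{0,\pi/2\}$ then follow by the same computation applied to the single-Gamma forms (\ref{eqn:MTGamkern})--(\ref{eqn:MTInvGamkern}), and they are consistent with the general formula since $\cos 2\theta = \pm 1$ there.

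I expect the main obstacle to be the honest, uniform control of the remainder $\rho(\gamma,z)$ over the whole admissible range of $z$, rather than the leading-order algebra. One must bound the tails of the two Gamma-ratio expansions and the error incurred both in multiplying the factors and in expanding $\nu^{w}$, all while tracking the polynomial growth in $w$; the condition $|z-1| = o(\gamma^{-2})$ is what lets one absorb the higher-order pieces into a term of the stated order $O(\gamma^4(1+|z-1|)^2)$ uniformly in $z$. I would isolate this as a short lemma giving $\Gamma(A+s)/\Gamma(A)$ with an explicit remainder that is uniform for $|s| = o(A)$, so that the final bound on $\rho$ reduces to substituting $A\asymp\gamma^{-2}$, $s\asymp w$ and combining the estimates.
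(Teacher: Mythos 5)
Your proposal follows essentially the same route as the paper's proof: the paper likewise expands the two Gamma ratios with their power factors arranged so that the $\tan^{2\xi(z-1)}$ contributions cancel against the $(1/\tan^2\theta)^{\xi(z-1)}$ prefactor, expands $\nu^{z-1}$ binomially, and collects the $O(\gamma^2)$ terms via $\cos^2\theta+\sin^2\theta=1$ and $\sin^2\theta-\cos^2\theta=-\cos 2\theta$, restricting the detailed computation to $\theta\notin\{0,\pi/2\}$ just as you do. The uniform-in-$s$ Gamma-ratio lemma you propose to prove by hand is exactly what the paper imports from the literature: it states Fields' (1970) uniform version of the Tricomi--Erd\'elyi expansion as a preliminary lemma and applies it with $t\asymp\gamma^{-2}$, $\alpha=\pm\xi(z-1)$, $\beta=0$.
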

\begin{proof} See Appendix. \end{proof}
\begin{remark}  \label{rem:delta}
The choice $\Delta = \frac{1}{2}\left(1+ \frac{\cos 2\theta}{\xi}\right)$ yields the simple form
\begin{equation} \Ms(L_{\nu,\gamma,\xi,\theta};z) = 1 + \frac{\gamma^2}{2}\,z(z-1) + \rho(\gamma,z), \label{eqn:MLasympsimp} \end{equation} 
which only involves $\gamma$ explicitly.
\end{remark}

\ppn Finally, the Mellin transform of $L^2_{\nu,\gamma,\xi,\theta}$, described by the following result, will also be used.
\begin{proposition} \label{prop:asympMellL2} Consider the Meijer density $L_{\nu,\gamma,\xi,\theta}$ as described above. $(i)$ The Mellin transform of $L^2_{\nu,\gamma,\xi,\theta}$ is 
\begin{multline} \Ms(L^2_{\nu,\gamma,\xi,\theta};z) = \frac{1}{\xi}\,\nu^{z-2}\, \frac{\Bs\left(\frac{2\xi^2}{\gamma^2 \cos^2 \theta},\frac{2\xi^2}{\gamma^2 \sin^2 \theta} \right)}{\Bs^2\left(\frac{\xi^2}{\gamma^2 \cos^2 \theta},\frac{\xi^2}{\gamma^2 \sin^2 \theta} \right)} \\ \times \left(\frac{1}{\tan^2 \theta}\right)^{\xi(z-2)}\,\frac{\Gamma\left(\frac{2\xi^2}{\gamma^2 \cos^2 \theta}+\xi(z-2) \right)}{\Gamma\left(\frac{2\xi^2}{\gamma^2 \cos^2 \theta}\right)} \, \frac{\Gamma\left(\frac{2\xi^2}{\gamma^2 \sin^2 \theta}+\xi(2-z) \right)}{\Gamma\left(\frac{2\xi^2}{\gamma^2 \sin^2 \theta}\right)}, \label{eqn:MTL2} \end{multline}
where $\Bs(\cdot,\cdot)$ is the Beta function, on the strip of holomorphy
\begin{equation} \Ss_{L^2_{\nu,\gamma,\xi,\theta}} = \left\{z \in \C: 2-\frac{2\xi}{\gamma^2 \cos^2 \theta} < \Re(z) < 2+\frac{2\xi}{\gamma^2 \sin^2 \theta}\right\}. \label{eqn:SL2}  \end{equation} 
$(ii)$ Fix $\xi>0$ and $\theta \in [0,\pi/2]$. Let $\gamma \to 0$ and $\nu = 1 + \Delta \gamma^2$ for some real constant $\Delta$. Then we have the asymptotic expansion
\begin{equation}  \Ms(L^2_{\nu,\gamma,\xi,\theta};z) =  \frac{1}{2\sqrt{\pi} \gamma}\left(1+ \omega(\gamma,z)\right), \label{eqn:ML2asymp}\end{equation}
where $|\omega(\gamma,z)| = O(\gamma^2 (1+|z-2|))$, provided $|z-2|=o(\gamma^{-2})$.
\end{proposition}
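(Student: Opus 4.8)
The plan is to prove $(i)$ by reducing the Mellin transform of the square to a classical Beta integral, and $(ii)$ by inserting the Legendre duplication formula together with uniform Gamma-ratio asymptotics into the exact expression from $(i)$.

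For $(i)$ I would start from Proposition \ref{prop:Meijerkern}: $L_{\nu,\gamma,\xi,\theta}$ is the density of $Y = \nu X^\xi$ with $X \sim F(2a,2b)$, abbreviating $a = \frac{\xi^2}{\gamma^2 \cos^2\theta}$ and $b = \frac{\xi^2}{\gamma^2 \sin^2\theta}$. Denoting by $f_F$ the $F(2a,2b)$-density, the change of variable $y = \nu x^\xi$ inside $\int_0^\infty y^{z-1} L^2_{\nu,\gamma,\xi,\theta}(y)\,dy$ gives
\[ \Ms(L^2_{\nu,\gamma,\xi,\theta};z) = \frac{\nu^{z-2}}{\xi}\,\Ms\left(f_F^2;\xi(z-2)+2\right). \]
Since $f_F(x) \propto x^{a-1}(1+\tan^2\theta\,x)^{-(a+b)}$, the square is proportional to $x^{2a-2}(1+\tan^2\theta\,x)^{-2(a+b)}$, and its Mellin transform follows from the standard formula $\int_0^\infty x^{p-1}(1+cx)^{-q}\,dx = c^{-p}\Bs(p,q-p)$ (valid for $0<\Re(p)<\Re(q)$) with $p = 2a+\xi(z-2)$, $q = 2(a+b)$, $c = \tan^2\theta$. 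Rearranging with the Gamma/Beta identities reproduces (\ref{eqn:MTL2}) exactly, and the two convergence constraints $\Re(p)>0$ and $\Re(p)<\Re(q)$ are precisely the inequalities defining the strip (\ref{eqn:SL2}). The endpoints $\theta\in\{0,\pi/2\}$ follow identically from the Gamma and Inverse-Gamma reductions recorded after Proposition \ref{prop:Meijerkern}.

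For $(ii)$ I would split (\ref{eqn:MTL2}) into the $z$-free prefactor $\frac{1}{\xi}\Bs(2a,2b)/\Bs^2(a,b)$ and the $z$-dependent remainder. Applying $\Gamma(2t)=\frac{2^{2t-1}}{\sqrt{\pi}}\Gamma(t)\Gamma(t+\tfrac12)$ to $a$, $b$ and $a+b$ cancels all powers of $2$ and leaves
\[ \frac{1}{\xi}\frac{\Bs(2a,2b)}{\Bs^2(a,b)} = \frac{1}{2\sqrt{\pi}\,\xi}\,\frac{\Gamma(a+\tfrac12)\Gamma(b+\tfrac12)\Gamma(a+b)}{\Gamma(a)\Gamma(b)\Gamma(a+b+\tfrac12)}. \]
As $\gamma\to 0$ both $a,b\to\infty$, so the elementary asymptotic $\Gamma(t+\tfrac12)/\Gamma(t)\sim t^{1/2}$ collapses this to $\frac{1}{2\sqrt{\pi}\,\xi}\sqrt{ab/(a+b)}$; since $ab/(a+b)=\xi^2/\gamma^2$ exactly, the prefactor equals $\frac{1}{2\sqrt{\pi}\gamma}(1+O(\gamma^2))$, the announced leading order. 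The remainder $\nu^{z-2}(\tan^2\theta)^{-\xi(z-2)}\frac{\Gamma(2a+\xi(z-2))}{\Gamma(2a)}\frac{\Gamma(2b-\xi(z-2))}{\Gamma(2b)}$ tends to $1$, because the leading powers $(2a)^{\xi(z-2)}(2b)^{-\xi(z-2)}=(a/b)^{\xi(z-2)}=(\tan^2\theta)^{\xi(z-2)}$ cancel the explicit $(\tan^2\theta)^{-\xi(z-2)}$, while $\nu^{z-2}=(1+\Delta\gamma^2)^{z-2}\to 1$.

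The main obstacle is the uniform-in-$z$ control of the relative error $\omega(\gamma,z)$. Because the shifts $\pm\xi(z-2)$ in the Gamma arguments grow with $z$, the fixed-shift expansion $\Gamma(t+\alpha)/\Gamma(t)=t^{\alpha}(1+\frac{\alpha(\alpha-1)}{2t}+\cdots)$ cannot be used verbatim; I would instead invoke its uniform version, legitimate when the shift is small against the argument. Here the arguments $2a,2b$ are of order $\gamma^{-2}$, so shift/argument is $O(\gamma^2|z-2|)=o(1)$ under the hypothesis $|z-2|=o(\gamma^{-2})$. Writing $\log\{\Gamma(t+\alpha)/\Gamma(t)\}=\alpha\psi(t)+\tfrac12\alpha^2\psi'(t_\ast)$ for some $t_\ast$ between $t$ and $t+\alpha$ and using $\psi(t)=\log t+O(t^{-1})$, $\psi'(t_\ast)=O(t^{-1})$, the two ratios contribute a correction whose leading part is $\frac{\gamma^2}{4}(z-2)^2$ together with terms of order $\gamma^2|z-2|$; combining these with the expansion of $\nu^{z-2}$ and the $z$-free $O(\gamma^2)$ from the prefactor is what must be assembled into the bound for $\omega$. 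The crux is precisely this bookkeeping: tracking the quadratic-in-$(z-2)$ Gamma-ratio contribution and pinning down the range of $z$ over which the remainder is genuinely of the stated order.
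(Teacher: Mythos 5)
Your part $(i)$ is correct and follows a genuinely different route from the paper's. The paper quotes from Bateman's tables the squaring identity for $G_{1,1}^{1,1}$-functions, $\bigl\{G_{1,1}^{1,1}\bigr\}^2 = \frac{\Gamma^2(a+b)}{\Gamma(2a+2b)}\,G_{1,1}^{1,1}$ with doubled parameters, and reads off (\ref{eqn:MTL2}) from (\ref{eqn:MTG11}). You instead push the square of the $F$-density of Proposition \ref{prop:Meijerkern} through the power/scale substitution to get $\Ms(L^2_{\nu,\gamma,\xi,\theta};z)=\frac{\nu^{z-2}}{\xi}\,\Ms(f_F^2;\xi(z-2)+2)$ (this identity is correct), and then evaluate the resulting Beta integral $\int_0^\infty x^{p-1}(1+cx)^{-q}\,dx=c^{-p}\Bs(p,q-p)$ explicitly. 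The parameters match (\ref{eqn:MTL2}) after the standard Gamma/Beta rearrangement, and -- a real advantage of your route -- the convergence constraints $0<\Re(p)<\Re(q)$ are exactly the strip (\ref{eqn:SL2}), so the strip of holomorphy comes for free rather than as a separate check. Your approach is more elementary and self-contained; the paper's is shorter but leans on a table identity.

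For part $(ii)$, your strategy coincides with the paper's (duplication formula on the Beta-ratio prefactor, $\Gamma(t+\tfrac12)/\Gamma(t)\sim t^{1/2}$, and $ab/(a+b)=\xi^2/\gamma^2$; the paper does exactly this, invoking Lemma \ref{lemma:fields}), but your proof stops at what you call the crux, and that crux is a genuine obstruction rather than bookkeeping. As you correctly compute, the two Gamma-ratio corrections produce a term $\frac{\gamma^2}{4}(z-2)^2$ which does \emph{not} cancel: flipping the sign of the shift leaves its square unchanged, so the two contributions $\frac{\alpha(\alpha-1)}{2t}$ add rather than cancel. For $|z-2|\asymp\gamma^{-1}$, which is permitted under $|z-2|=o(\gamma^{-2})$, this term is of order $1$ while the claimed bound $\gamma^2(1+|z-2|)$ is of order $\gamma$; hence no assembly of the pieces can yield $|\omega(\gamma,z)|=O(\gamma^2(1+|z-2|))$ uniformly on the stated range. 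What your argument (or Lemma \ref{lemma:fields} with $M=1$, whose remainder is already $O(\gamma^2(1+|z-2|)^2)$) actually proves is $|\omega(\gamma,z)|=O(\gamma^2(1+|z-2|)^2)$, in line with the quadratic factor appearing in the error of Proposition \ref{prop:asympMell}. You should know that the paper's own proof shares this defect -- it cites Lemma \ref{lemma:fields} and then asserts the linear bound without justification -- and that the slip is harmless in the main application: in the integrated-variance part of the proof of Theorem \ref{thm:consistMISE}, the expansion (\ref{eqn:ML2asymp}) is evaluated at the fixed real point $z=2c$, where linear and quadratic bounds agree. To finish your write-up rigorously, either prove the (attainable) quadratic bound or restrict the claim to bounded $|z-2|$. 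One last technical point: for complex shifts the mean-value form $\log\{\Gamma(t+\alpha)/\Gamma(t)\}=\alpha\psi(t)+\tfrac12\alpha^2\psi'(t_*)$ is unavailable; use the integral remainder $\alpha^2\int_0^1(1-s)\,\psi'(t+s\alpha)\,ds$ instead.
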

\begin{proof} See Appendix. \end{proof}

\section{Mellin-Meijer kernel density estimation} \label{sec:Mkde}

\subsection{Basic idea} \label{subsec:basicMKDE}

Following the idea introduced in Section \ref{subsec:LNkde}, we define a `Mellin' version of the kernel estimator of a density $f$ supported on $\R^+$ as 
\begin{equation} \hat{f}_0(x) = (L_\eta \MC \P_n)(x), \label{eqn:MKDEconv} \end{equation}
where $\P_n$ is again the sample empirical measure and $L_\eta$ is an $\R^+$-supported density whose `spread' (to make precise later) is driven by a parameter $\eta >0$ playing the role of smoothing parameter. Obviously, (\ref{eqn:MKDEconv}) is just the multiplicative analog of (\ref{eqn:convolkde}). From (\ref{eqn:Mellinconv}), we have $\hat{f}_0(x) = \int_{0}^\infty L_\eta\left(\frac{x}{v}\right)\frac{1}{n} \sum_{k=1}^n \delta_{X_k}(v)\,\frac{dv}{v}$, that is
\begin{equation} 
\hat{f}_0(x) = \frac{1}{n} \sum_{k=1}^n \frac{1}{X_k} L_\eta\left(\frac{x}{X_k}\right) . \label{eqn:Mellkde}
\end{equation}
Estimator (\ref{eqn:Mellkde}) assesses which observations $X_k$ are local to $x$ through the ratios $x/X_k$, which is natural on $\R^+$. Similarities {\it and} differences between (\ref{eqn:Mellkde}) and expression (2) in \cite{Comte12} and expression (2.4) in \cite{Mnatsakanov12}, are easy to identify. In particular, neither of those two estimators integrates to one, whereas $\hat{f}_0$ always defines a {\it bona fide} density: given that $L_\eta$ is a density, it is obvious that $\hat{f}_0(x) \geq 0 \ \forall x >0$ and $\int_0^\infty \hat{f}_0(x)\,dx =1$. In addition, in contrast to estimators of type (\ref{eqn:asymkde}), estimator (\ref{eqn:Mellkde}) is constructed as a sum of `bumps' $\Lambda^{(k)}_\eta(x) \doteq \frac{1}{X_k} L_\eta\left(\frac{x}{X_k}\right)$, which makes it easy to understand visually -- see \citet[Section 3.1.5]{Hardle04} for related comments in the case of the conventional kernel density estimator (\ref{eqn:convkde}). Figure \ref{fig:bumps} (left panel) illustrates this for an artificial sample of size $n=15$.

\begin{figure}[h]
\centering
\includegraphics[width=0.9\textwidth]{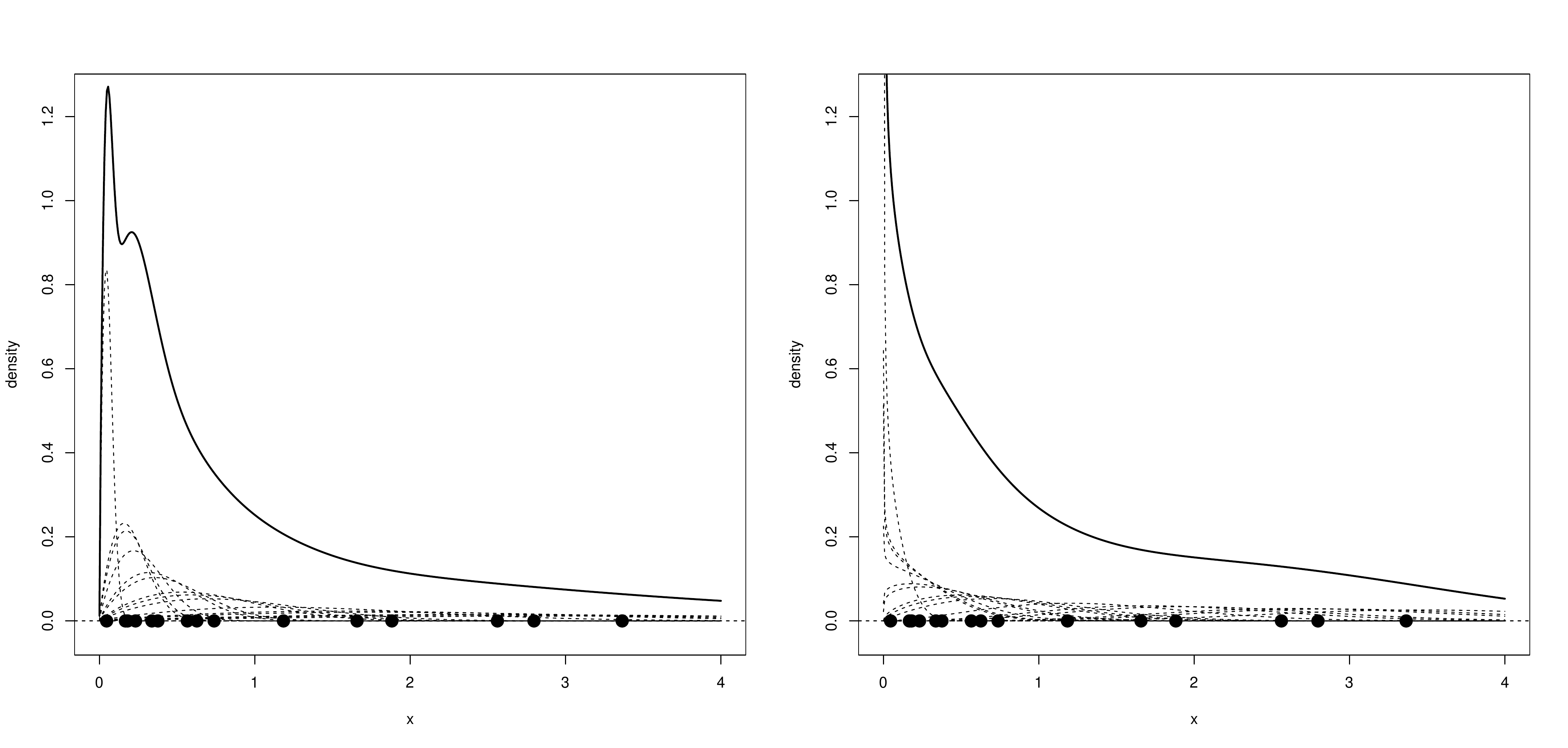}
\caption{Construction of the basic estimator (\ref{eqn:MKDEconv})-(\ref{eqn:Mellkde}) (left panel) and its refined version (\ref{eqn:MKDEconvK})-(\ref{eqn:MKDEK}) with (\ref{eqn:cvk}) (right panel) for an artificial sample of size $n=15$. The observations $X_k$ (big dots) and the associated `bumps' $\Lambda_\eta^{(k)}$ (dashed lines) are shown. The final estimator (thick line) is the sum of those bumps. In both cases, the smoothing parameter is $\eta=0.5$ and the kernel is a Meijer density with $\xi=1/2$ and $\theta = 0$: in (\ref{eqn:Mellkde}), $L_\eta = L_{1+\eta^2,\eta,1/2,0}$; and in (\ref{eqn:MKDEK}), $L_\eta^{(k)}$ is the Meijer kernel $L_{\nu^{(k)}_{\eta},\gamma^{(k)}_{\eta},1/2,0}$, as described in Section \ref{sec:Meijkern}.
}
\label{fig:bumps}
\end{figure}

\ppn Unlike in the conventional case, though, here the `bumps' do not have the same width. By definition, $\Lambda^{(k)}_\eta$ is the density of the random variable $X_k \times \varepsilon$, where $\varepsilon$ has density $L_\eta$ and $X_k$ is fixed. If $\mu_\eta$ and $\sigma_\eta$ are the mean and standard deviation of  $L_\eta$, then $\Lambda^{(k)}_\eta$ has standard deviation $\sigma_\eta^{(k)} = X_k\, \sigma_\eta$, obviously different for each $k$. Hence, to an extent driven by $\eta$, the bumps $\Lambda^{(k)}_\eta$ corresponding to the $X_k$'s close to the boundary 0 are high and narrow, while those in the right tail are wide and flat (Figure \ref{fig:bumps}, left). More smoothing is thus automatically applied in the tail than close to the boundary (`{\it adaptive behaviour}'), and this is essentially how both boundary issues and the `spurious bumps' problem are addressed by (\ref{eqn:MKDEconv})-(\ref{eqn:Mellkde}).

\ppn Given that $\Lambda_\eta^{(k)}$ has mean $\mu_\eta^{(k)} = X_k\, \mu_\eta$, what is common to all the $\Lambda^{(k)}_\eta$'s is actually their {\it coefficient of variation} $\chi^{(k)}_\eta \doteq \frac{\sigma_\eta^{(k)}}{\mu_\eta^{(k)}} = \frac{\sigma_\eta}{\mu_\eta} \doteq \chi_\eta$. Note that this is also the coefficient of variation of $L_\eta$, the `canonical' bump for $X_k=1$. This points out the natural role of the coefficient of variation of the kernel $L_\eta$ in this framework, suggesting to defining it as the global smoothing parameter $\eta$.

\subsection{Boundary undersmoothing, tail oversmoothing and smoothing transfer} \label{subsec:BUTO}

Unfortunately, this idea has to be slightly amended. The reason is that the seemingly desirable `adaptive behaviour' described in the previous section actually occurs in excess. Given that $\sigma_\eta^{(k)} = X_k\, \sigma_\eta\simeq 0$ for $X_k \simeq 0$, the effective amount of smoothing applied in the boundary area is virtually nil. As a result, the basic estimator (\ref{eqn:Mellkde}) severely undersmooths close to 0 (regardless of the value of $\eta$), and typically shows a very rough and erratic behaviour there (Figure \ref{fig:bumps}, left). On the other hand, the effective amount of smoothing applied in the tail area is very high, as $\sigma_\eta^{(k)}$ gets huge as $X_k$ does. Visually less disturbing, this severe oversmoothing effect usually goes unnoticed, but it exists nonetheless.

\ppn Theoretically, this `{\it boundary undersmoothing/tail oversmoothing}' behaviour materialises through a factor $1/x$ in the asymptotic variance expression -- see e.g.\ (\ref{eqn:asymptvarlognormkde}), recalling that the log-Normal kernel density estimator (\ref{eqn:lognormkde}) is a particular case of (\ref{eqn:Mellkde}) with $L_\eta$ being a Log-Normal density.\footnote{The same factor $1/x$ appears in variance expressions of many other estimators, including \cite{Comte12}'s, \cite{Jin03}'s, \cite{Marchant13}'s and \cite{Mnatsakanov12}'s.} In fact, this arises because the Haar measure on $(\R^+,\times)$ is $\nu(dx) = \frac{dx}{x}$. It has thus a mere topological origin, and cannot really be thought of as a deficiency of the idea: the estimator does exactly what it is supposed to do in $(\R^+,\times)$. It remains, though, that the so-produced estimates are visually unsatisfactory, which calls for some adjustment. What is suggested here is a very natural (as opposed to an {\it ad-hoc} correction) {\it smoothing transfer} operation: given that the basic estimator smooths too much in the tail and not enough at the boundary, make it use some of the amount of smoothing in excess in the tail for filling the shortage of smoothing at the boundary. 

\ppn This transfer is easily achieved by making the coefficient of variation $\chi_\eta^{(k)}$ of $\Lambda^{(k)}_\eta$ a decreasing function of $X_k$, instead of keeping it constant for all $k$. One can think of setting $\chi_\eta^{(k)} = \eta/X_k$, with $\eta >0$ some smoothing parameter. This enforces $\sigma_\eta^{(k)} \equiv \eta \mu_\eta$ for all $k$, i.e., all the `bumps' have the same width (same standard deviation). Of course, $\chi_\eta^{(k)}  \sim 1/X_k$ means that one exactly adapts to the `Haar geometry' of $(\R^+,\times)$. As a result, the whole extra amount of smoothing initially applied in the tail is transferred back to the boundary area, and the exact same level of smoothing is applied all over $\R^+$. This means, however, that the adaptive behaviour of the initial estimator has been destroyed entirely, which typically implies the resurgence of boundary issues (bias) and spurious bumps in the tail. This is not desirable.

\ppn A natural trade-off between `full adaptation' and `no adaptation' to the Haar geometry is achieved by setting $\chi_\eta^{(k)} = \eta/\sqrt{X_k}$ or
\begin{equation} \chi_\eta^{(k)} = \frac{\eta}{\sqrt{\eta^2+X_k}}, \label{eqn:cvk} \end{equation}
for some smoothing parameter $\eta>0$. Both choices produce equivalent estimators asymptotically ($\eta \to 0$ as $n \to \infty$, see Assumption \ref{ass:eta} below), however (\ref{eqn:cvk}) typically produces more stable estimates in practice, as $\chi_\eta^{(k)}$ stays bounded for $X_k \simeq 0$. Hence (\ref{eqn:cvk}) will be the preferred option throughout the rest of the paper. In any case, now $\sigma_\eta^{(k)} \sim \sqrt{X_k}$, and the bumps remain wider in the tail than at the boundary. The estimator keeps adapting to the boundary and tail areas, but not as excessively as previously. This is illustrated in Figure \ref{fig:bumps} (right panel) for the same sample and with the same smoothing parameter $\eta$ as in the left panel: the `bumps' at the boundary are no more as narrow, and the bumps in the tail no more as flat, as in the initial case. The final estimate of $f$ seems rightly smooth all over $\R^+$. 

\ppn Figure \ref{fig:bumps} highlights another major benefit of allowing $\chi_\eta^{(k)}$ to depend on $X_k$: now the bumps vary in shape as well. In the basic case (\ref{eqn:Mellkde}), all the bumps $\Lambda^{(k)}_\eta(x) \doteq \frac{1}{X_k} L_\eta\left(\frac{x}{X_k}\right)$ are just rescaled versions of $L_\eta$, hence have the same shape. In particular, if $L_\eta$ is such that $L_\eta(0) = 0$, then the basic estimator $\hat{f}_0(0) \equiv 0$ automatically, no matter what the data show in the boundary area (Figure \ref{fig:bumps}, left).\footnote{This problem of `$\hat{f}(0) \equiv 0$' is shared by many other kernel estimators of $\R^+$-supported densities, including \cite{Jin03}'s, \cite{Scaillet04}'s, \cite{Marchant13}'s and \cite{Mnatsakanov12}'s.} This is no more the case when $\Lambda^{(k)}_\eta$ may have different coefficients of variation. For instance, in Figure \ref{fig:bumps} (right), the bumps associated to the data close to 0 are no more tied down to 0: as their coefficient of variation increases, they are forced to climb along the $y$-axis. This enables the final estimator to take a positive and even infinite value at $x=0$.

\ppn This justifies to define a `refined' version of estimator (\ref{eqn:MKDEconv}) as
\begin{equation} \hat{f}(x) = \frac{1}{n} \sum_{k=1}^n (L^{(k)}_\eta \MC \delta_{X_k})(x), \label{eqn:MKDEconvK} \end{equation}
where each $L^{(k)}_\eta$ is an $\R^+$-supported density whose coefficient of variation $\chi_\eta^{(k)}$ depends on both an overall smoothing parameter $\eta$ and on $X_k$ through (\ref{eqn:cvk}). Explicitly, this is
\begin{equation}
 \hat{f}(x) = \frac{1}{n} \sum_{k=1}^n \frac{1}{X_k} L_\eta^{(k)}\left(\frac{x}{X_k}\right). \label{eqn:MKDEK}
\end{equation}
The observations made about (\ref{eqn:Mellkde}) obviously remain valid: $\hat{f}$ is always a valid probability density, it assesses the proximity between $x$ and $X_k$ through their ratio $x/X_k$, etc. Actually, (\ref{eqn:MKDEK}) is just another version of (\ref{eqn:Mellkde}) which allows a better allocation of the total amount of smoothing applied, but without affecting the intuitive simplicity of the basic estimator.

\begin{remark} Estimator (\ref{eqn:MKDEK}) may be thought of as a `sample-smoothing' kernel estimator \citep{Terrell92}, in the sense that the smoothing parameter associated with a particular `bump', here $\chi_\eta^{(k)}$, varies with $X_k$. However, conventional `sample-smoothing' aims to produce adaptive estimators by using a large bandwidth where data are sparse, i.e.\ when the density $f$ is low, and a small bandwidth where data are abundant, i.e.\ where $f$ is high, see e.g.\ \cite{Abramson82}'s square-root law. As a result, it typically requires pilot estimation of $f$, which is not without causing further issues \citep{Terrell92,Hall95}. Here, `sample-smoothing' is deterministically articulated around (\ref{eqn:cvk}) -- obviously, no pilot estimation is necessary -- and just aims to adjust to the particular `Haar geometry' of $(\R^+,\times)$.
\end{remark}

\subsection{Meijer kernels} \label{sec:Meijkern}

Owing to the connection between Mellin transforms and Meijer densities expounded in Section \ref{sec:motiv}, using kernels of Meijer type seems natural here. Hence, consider taking for $L_\eta^{(k)}$ in (\ref{eqn:MKDEconvK})-(\ref{eqn:MKDEK}) a Meijer density $L_{\nu,\gamma,\xi,\theta}$ as described in Section \ref{subsec:Meijerdens}. Fix the parameters $\xi>0$ and $\theta \in [0,\pi/2]$ and keep them constant for all $k$. Those essentially determine the type of kernels that will be used. E.g., if we take $\theta = 0$ and $\xi=1$, then the $L_\eta^{(k)}$'s are Gamma densities; with $\theta = \pi/2$ and $\xi =1$, then the $L_\eta^{(k)}$'s are Inverse Gamma densities; with $\xi=1/2$ and $\theta=0$, the $L_\eta^{(k)}$'s are Nakagami densities, etc.; refer to Table \ref{tab:meijerdistr} in Appendix \ref{app:meijerdistr}. In some sense, chosing $\xi$ and $\theta$ is akin to chosing which kernel (e.g.\ Epanechnikov or Gaussian) to work with in the conventional case (\ref{eqn:convkde}). Here, though, this choice should be made more carefully, as $\theta$ and $\xi$ drive the head and tail behaviour of the kernels (Section \ref{subsec:Meijerdens}). If it is anticipated that $f$ has a fat tail and/or head, then it seems intuitively clear that working with kernels $L^{(k)}_\eta$ with fat tails and/or heads should be beneficial to the estimator, and $\xi$ and $\theta$ should be picked accordingly. This will be confirmed by theoretical considerations in Section \ref{subsec:asympt} (in particular, see Assumption \ref{ass:Lxi}).

\ppn Now, for $\eta >0$ some smoothing parameter, set 
\begin{equation} 
 \gamma   \doteq \gamma_\eta^{(k)} = \frac{\eta}{\sqrt{\eta^2 + X_k}}. \label{eqn:coefgamma} \\
 \end{equation}
This is motivated by (\ref{eqn:cvk}) and Proposition \ref{lem:asympCV}, stating that asymptotically the coefficient of variation of $L_{\nu,\gamma,\xi,\theta}$ is equivalent to $\gamma$. Finally, motivated by Remark \ref{rem:delta}, set
\begin{equation} \nu  \doteq \nu_\eta^{(k)} = 1+ \frac{1}{2} \,\left.\gamma_\eta^{(k)}\right.^2 \left(1+ \frac{\cos 2\theta}{\xi}\right). \label{eqn:nucoef}\end{equation}
This choice ensures that the expectation of $L_\eta^{(k)}$ is asymptotically equivalent to $1 +\left.\gamma_\eta^{(k)}\right.^2$ for all values of $\theta$ and $\xi$ (take $z=2$ in (\ref{eqn:MLasympsimp})). Intuitively, given that $L_\eta^{(k)}$ is the density of the multiplicative noise used to dilute $X_k$ (see Section \ref{subsec:LNkde}), it is understood that $L_\eta^{(k)}$ should have expectation (close to) 1 for all $k$ and $\eta$. The reason why it should not be {\it exactly} 1, is again that expectations are slightly distorted in the multiplicative environment. For instance, the expectation of the log-Normal$(0,\sigma^2)$ distribution is not $1$, but $\exp(\sigma^2/2) \sim (1+\sigma^2/2)$ as $\sigma^2 \to 0$. The parameter $\nu$ as in (\ref{eqn:nucoef}) exactly accounts for that distortion.

\ppn We call kernels $L^{(k)}_\eta = L_{\nu^{(k)}_{\eta},\gamma^{(k)}_{\eta},\xi,\theta}$ with this parameterisation (\ref{eqn:coefgamma})-(\ref{eqn:nucoef}), {\it Meijer kernels}. Figure \ref{fig:kernels} shows examples of such Meijer kernels for $\xi \in \{1/2, 1,2\}$ and $\theta \in \{0,\pi/4,\pi/2\}$, for $X_k=1$ (canonical kernel $L_\eta$) and several values of the smoothing parameter $\eta$. As $\eta$ approaches 0, the kernels concentrate around 1 with a fading effect of the values of $\theta$ and $\xi$ on their shape, as suggested by expansion (\ref{eqn:MLasympsimp}).


\begin{figure}[h]
\centering
\includegraphics[width=\textwidth]{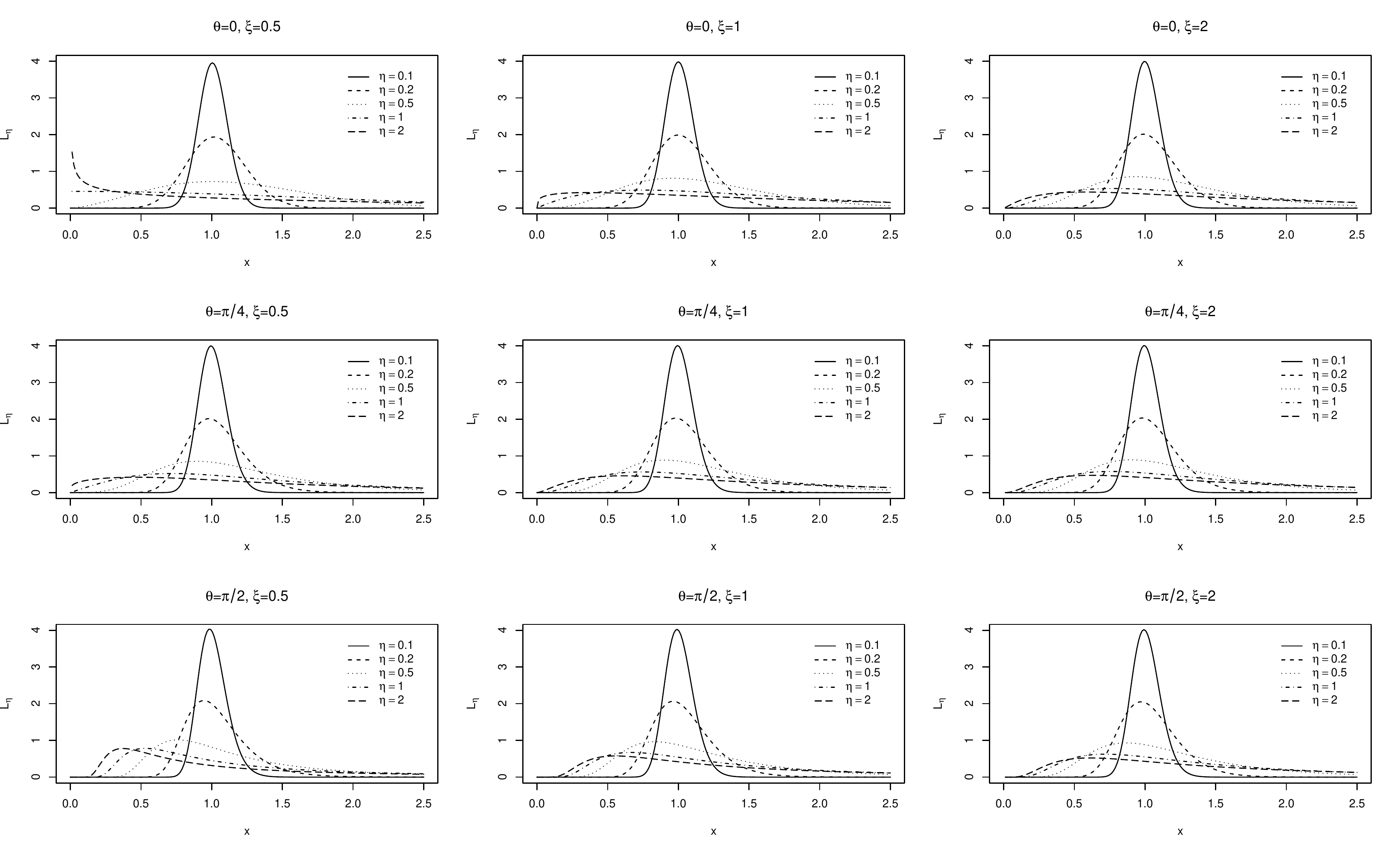}
\caption{Meijer kernels $L^{(k)}_\eta = L_{\nu^{(k)}_{\eta},\gamma^{(k)}_{\eta},\xi,\theta}$ for $\theta = 0, \pi/4, \pi/2$ and $\xi = 1/2, 1, 2$ for $X_k = 1$.}
\label{fig:kernels}
\end{figure}

\begin{remark}
As mentioned above, setting $\theta = 0$ and $\xi=1$ as parameters of the Meijer kernels amounts to using Gamma densities for $L_\eta^{(k)}$ in (\ref{eqn:MKDEK}). However, the so-defined estimator is obviously {\it not}, and is not even related to, \cite{Chen00}'s `Gamma kernel estimator' (in any of its forms). Indeed, with $\theta = 0$ and $\xi=1$, (\ref{eqn:MTGamkern}) tells that $L_\eta^{(k)}$ is the density of $\nu^{(k)}_{\eta} X$, where $X \sim \Gamma(1/{\gamma_\eta^{(k)}}^2,1/{\gamma_\eta^{(k)}}^2)$, that is, with (\ref{eqn:coefgamma})-(\ref{eqn:nucoef}), $L_\eta^{(k)}$ is the density of the $\Gamma\left(\frac{X_k+\eta^2}{\eta^2},\frac{(X_k+\eta^2)^2}{\eta^2(2\eta^2+X_k)}\right)$-distribution. This gives for the final estimator the explicit form
\begin{equation} \hat{f}(x) = \frac{1}{n} \sum_{k=1}^n \frac{1}{\Gamma(1+X_k/\eta^2)} \left(\frac{(\eta^2+X_k)^2}{\eta^2 X_k(2\eta^2+X_k)} \right)^{1+X_k/\eta^2} x^{X_k/\eta^2} e^{-\frac{(\eta^2+X_k)^2}{\eta^2 X_k(2\eta^2+X_k)}\,x}.\label{eqn:explicitgammaMMkde} \end{equation}
This cannot be compared to (\ref{eqn:gammakde}). The slightly more complicated nature of expression (\ref{eqn:explicitgammaMMkde}) should not be repelling as it is given here just for illustration. Exactly as all the properties of the conventional estimator are obtained from the generic expression (\ref{eqn:convkde}) only, without the need to write explicitly the kernel function $K$, the behaviour of the Mellin-Meijer kernel density estimator is to be comprehended exclusively through (\ref{eqn:MKDEK}) and the general properties of Meijer kernels.
\end{remark}

\section{Asymptotic properties} \label{subsec:asympt}

In this section we obtain the asymptotic properties of the Mellin kernel density estimator (\ref{eqn:MKDEK}) with Meijer kernels parameterised as described in Section \ref{sec:Meijkern}, under the following assumptions.

\begin{assumption} The sample $\{X_k,k =1,\ldots,n\}$ consists of i.i.d.\ replications of a positive random variable $X$ whose distribution $F$ admits a density $f$ twice continuously differentiable on $(0,\infty)$; \label{ass:iid} \end{assumption} 
\begin{assumption} There exist $\alpha, \beta \in (0,+\infty]$, with $2\alpha + \beta > 5/2$, such that $\E(X^{-\alpha}) < \infty$ and $\E(X^{\beta}) < \infty$;\label{ass:Sf}\end{assumption} 
\begin{assumption} For any smoothing parameter $\eta >0$ and for each $k \in \{1,\ldots,n\}$, the kernel $L^{(k)}_\eta$ is a Meijer kernel as described in Section \ref{sec:Meijkern}, with $\xi >0$ and $\theta \in [0,\pi/2]$ such that $\frac{\xi}{\cos^2 \theta} > \frac{1}{4} -\frac{\beta}{2}$ and $\frac{\xi}{\sin^2 \theta} > 1 - \alpha$; \label{ass:Lxi} \end{assumption}
\begin{assumption} The smoothing parameter $\eta \doteq \eta_n$ is such that $\eta \to 0$ and $n\eta \to \infty$ as $n \to \infty$. \label{ass:eta}\end{assumption}

Assumption \ref{ass:iid} fixes the setup. The requirement that $f$ has two continuous derivatives is classical in kernel estimation. Assumption \ref{ass:Sf} is a condition on the existence of some negative and positive moments of $f$, and excludes densities with {\it both} `very' fat head and tail, but it is actually a very mild requirement. In particular, $f$ is allowed to be positive and even unbounded at the boundary $x=0$ ($\alpha <1$), and/or to have power law decay in its tail ($\beta < \infty$), provided that it does not show extreme versions of those behaviours simultaneously ($\alpha$ and $\beta$ cannot be both very small at the same time). Assumption \ref{ass:Lxi} essentially requires that the parameters $\xi$ and $\theta$ of the Meijer kernels enable the estimator to properly reconstruct the head and tail behaviour of $f$. For instance, for $\alpha \simeq 0$ ($f$ has a very fat head), it would not work to take $\xi$ `small' and $\theta \simeq \pi/2$ (lightest head for the kernel, see Figure \ref{fig:kernels}). The imposed conditions leave much freedom about the choice of $\xi$ and $\theta$, though, and are restrictive only in extreme cases. For instance, only for $\beta < 1/2$ ({\it extremely} fat tail for $f$) would the condition $\frac{\xi}{\cos^2 \theta} > \frac{1}{4} -\frac{\beta}{2}$ not be trivially satisfied. Finally, Assumption \ref{ass:eta} is the classical condition on the smoothing parameter in kernel estimation. Under these assumptions, we have the following result.

\begin{theorem} \label{thm:consistMISE} Under Assumptions \ref{ass:iid}-\ref{ass:eta}, the Mellin-Meijer kernel density estimator $\hat{f}(x)$ (\ref{eqn:MKDEconvK})-(\ref{eqn:MKDEK}) is such that
\begin{equation} \E\left(\int_0^\infty x^{2c-1} \,\left(\hat{f}(x) - f(x)\right)^2\,dx \right) = O(\eta^4) +O((n\eta)^{-1}) \qquad \text{ as } n \to \infty, \label{eqn:WMISE} \end{equation}
for
\begin{equation} 
 c \in \left( \max\left(2-\alpha,\frac{3-2\alpha}{4},1-\frac{\xi}{\cos^2 \theta}\right),\min\left(\frac{3+2\beta}{4},1+\frac{\xi}{\sin^2 \theta} \right)\right). \label{eqn:ccond}
\end{equation}
\end{theorem}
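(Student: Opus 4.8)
\emph{Proof strategy.} The whole argument lives on the vertical line $\Re(z)=c$, via the Mellin--Plancherel identity
\[\int_0^\infty x^{2c-1}|h(x)|^2\,dx=\frac{1}{2\pi}\int_{-\infty}^\infty\big|\Ms(h;c+i\omega)\big|^2\,d\omega,\]
valid whenever $2c$ lies in the strip of holomorphy of $\Ms(h^2;\cdot)$; this is exactly why the weight $x^{2c-1}$ appears in (\ref{eqn:WMISE}). Applying it to $h=\hat f-f$ and taking expectations (Tonelli, the integrand being nonnegative) reduces the weighted MISE to $\frac{1}{2\pi}\int_{-\infty}^\infty\E|\Ms(\hat f;c+i\omega)-\Ms(f;c+i\omega)|^2\,d\omega$, whose integrand splits as $|B(z)|^2+\var\big(\Ms(\hat f;z)\big)$ with $B(z)\doteq\E\Ms(\hat f;z)-\Ms(f;z)$. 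The task is then to show that the bias part integrates to $O(\eta^4)$ and the variance part to $O((n\eta)^{-1})$, the constraints (\ref{eqn:ccond}) being precisely what guarantees convergence of the $\omega$-integrals and of the moments that surface on the way.

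First I would compute $\Ms(\hat f;z)$. Since $\Ms(\delta_{X_k};z)=X_k^{z-1}$, the Mellin-convolution rule (\ref{eqn:prodMell}) applied to (\ref{eqn:MKDEconvK}) gives $\Ms(\hat f;z)=\frac1n\sum_k\Ms(L^{(k)}_\eta;z)\,X_k^{z-1}$, whence by Assumption \ref{ass:iid}, $B(z)=\E[\Ms(L^{(1)}_\eta;z)X^{z-1}]-\Ms(f;z)$. Inserting the expansion (\ref{eqn:MLasympsimp}) of Proposition \ref{prop:asympMell} (with the $\Delta$ of Remark \ref{rem:delta}), in which the random parameter is $\gamma_1^2=\eta^2/(\eta^2+X)$, produces the leading term $\tfrac{\eta^2}{2}z(z-1)\,\E[X^{z-1}/(\eta^2+X)]$, which converges (dominated convergence, $\eta^2+X\ge X$) to $\tfrac{\eta^2}{2}z(z-1)\Ms(f;z-1)$. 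The operational identity $z(z-1)\Ms(f;z-1)=\Ms(xf'';z)$ (two Mellin integrations by parts) identifies the real-space bias as $\tfrac{\eta^2}{2}xf''(x)+o(\eta^2)$ and shows, via Plancherel in reverse, that the squared leading term integrates to $\tfrac{\eta^4}{4}\int_0^\infty x^{2c+1}(f''(x))^2\,dx$; this is $O(\eta^4)$ with an $f$-dependent constant, finite under the regularity in Assumption \ref{ass:iid}. Existence of $\Ms(f;z-1)$ on $\Re(z)=c$ is what forces $c>2-\alpha$ through (\ref{eqn:Sfmom}).

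For the variance, $\var(\Ms(\hat f;z))=\frac1n\var\big(\Ms(L^{(1)}_\eta;z)X^{z-1}\big)\le\frac1n\E[|\Ms(L^{(1)}_\eta;c+i\omega)|^2X^{2c-2}]$. Integrating over $\omega$ and using Plancherel once more for the fixed kernel $L^{(1)}_\eta$ turns $\frac{1}{2\pi}\int|\Ms(L^{(1)}_\eta;c+i\omega)|^2\,d\omega$ into $\Ms((L^{(1)}_\eta)^2;2c)$, so the integrated variance is at most $\frac1n\E[X^{2c-2}\Ms((L^{(1)}_\eta)^2;2c)]$. Proposition \ref{prop:asympMellL2}$(ii)$ gives $\Ms((L^{(1)}_\eta)^2;2c)\sim\frac{1}{2\sqrt\pi\,\gamma_1}=\frac{\sqrt{\eta^2+X}}{2\sqrt\pi\,\eta}$, so the bound becomes $\frac{1}{2\sqrt\pi\,n\eta}\E[X^{2c-2}\sqrt{\eta^2+X}]=O((n\eta)^{-1})$, finite because its leading moment $\E[X^{2c-3/2}]$ is, which by (\ref{eqn:Sfmom}) demands $\tfrac{3-2\alpha}{4}<c<\tfrac{3+2\beta}{4}$. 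The remaining bounds $1-\tfrac{\xi}{\cos^2\theta}<c<1+\tfrac{\xi}{\sin^2\theta}$ are forced by requiring $2c$ to lie in the strip (\ref{eqn:SL2}) of $\Ms((L^{(1)}_\eta)^2;\cdot)$ for \emph{every} admissible $\gamma_1\in(0,1]$, the narrowest strip occurring at $\gamma_1=1$ (i.e.\ $X\to0$). Nonemptiness of (\ref{eqn:ccond}) then reduces to $2-\alpha<\tfrac{3+2\beta}{4}$, i.e.\ exactly the hypothesis $2\alpha+\beta>5/2$ of Assumption \ref{ass:Sf}.

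\emph{Main obstacle.} The delicate point is the bias \emph{remainder}. The bound $|\rho(\gamma,z)|=O(\gamma^4(1+|z-1|)^2)$ of Proposition \ref{prop:asympMell} grows in $\omega=\Im(z)$ and is valid only for $|z-1|=o(\gamma^{-2})$, so feeding it naively into $\int|B|^2\,d\omega$ yields a non-integrable tail. I would control this by splitting the $\omega$-axis at a threshold of order $\gamma_1^{-2}$: on the low-frequency part the expansion applies and contributes $O(\eta^4)$ (the remainder being genuinely $O(\gamma_1^4)$ there), while on the high-frequency part I would drop the expansion and use the universal bound $|\Ms(L^{(1)}_\eta;c+i\omega)|\le\Ms(L^{(1)}_\eta;c)$ together with the $\omega$-decay of $\Ms(f;c-1+i\omega)$ inherited from $f\in C^2$ (Assumption \ref{ass:iid}). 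The awkwardness is that this threshold depends on $X$ through $\gamma_1$, so the split must be carried out \emph{inside} the expectation; a Fubini interchange plus the moment controls of Assumption \ref{ass:Sf} make it rigorous. This reconciliation, rather than the leading-order computation, is where the real work lies.
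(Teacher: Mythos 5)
Your strategy is the paper's own: Parseval's identity (\ref{eqn:Parseval}) on the line $\Re(z)=c$, the Mellin-domain bias/variance split built on $\Ms(\hat f;z)=\frac{1}{n}\sum_k\Ms(L^{(k)}_\eta;z)X_k^{z-1}$, the identity $z(z-1)\Ms(f;z-1)=\Ms(xf''(x);z)$ for the bias, Proposition \ref{prop:asympMellL2} for the variance, and the translation of (\ref{eqn:ccond}) into moment conditions via (\ref{eqn:Sfmom}) plus strip conditions at $\gamma^{(k)}_\eta=1$ (your nonemptiness check $2-\alpha<\frac{3+2\beta}{4}\iff 2\alpha+\beta>\frac{5}{2}$ is exactly the paper's). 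The genuine gap is your treatment of the small-$X_k$ region. Propositions \ref{prop:asympMell} and \ref{prop:asympMellL2} are asymptotic statements as $\gamma\to0$, and the random parameter $\gamma^{(1)}_\eta=\eta/\sqrt{\eta^2+X}$ is \emph{not} small on events of the form $\{X\le M\eta^2\}$: there it stays bounded away from $0$, so neither expansion is available at any frequency. Your proposed repair -- splitting the $\omega$-axis at $|\omega|\asymp\gamma_1^{-2}$ -- addresses only the large-$|\omega|$ validity of the remainder bound (a point on which, incidentally, you are more careful than the paper itself); it does nothing for small $X$, where the failure is in $\gamma$, not in $\omega$. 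What is missing is the paper's truncation device: choose $\epsilon\sim\eta^b$ with $0<b<\frac{c+\alpha-2}{c+\alpha-1}$, apply the expansions only on $\{X_k\ge\eta^2(\epsilon^{-2}-1)\}$, where $\gamma^{(k)}_\eta\le\epsilon\to0$ uniformly, and control the complementary contributions to both bias and variance through the local behaviour $f(x)=o(x^{\alpha-1})$ as $x\to0$ (drawn from $\E(X^{-\alpha})<\infty$, Assumption \ref{ass:Sf}), integrating only over the shrinking interval $[0,\eta^2(\epsilon^{-2}-1))$. It is exactly here -- not only for the existence of $\Ms(f;z-1)$ -- that the constraints $c>2-\alpha$ (which makes a valid $b>0$ exist) and $c>\frac{3-2\alpha}{4}$ earn their place in (\ref{eqn:ccond}).

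The gap is not cosmetic: your variance step actually fails on part of the admissible range. You majorize the integrated variance by $\frac{1}{2\sqrt{\pi}\,n\eta}\E\big[X^{2c-2}\sqrt{\eta^2+X}\big]$ and declare it finite ``because $\E[X^{2c-3/2}]$ is''; but near $x=0$ the integrand behaves like $\eta\,x^{2c-2}f(x)$, not $x^{2c-3/2}f(x)$, so finiteness essentially requires $2c-2\ge-\alpha$, i.e.\ $c\ge1-\frac{\alpha}{2}$, which (\ref{eqn:ccond}) does not imply: for $\alpha>2$ and $\xi/\cos^2\theta>\alpha/2$ it admits $c\in\big(\frac{3-2\alpha}{4},1-\frac{\alpha}{2}\big)$, and for a density with $f(x)\asymp x^{\alpha-1+\delta}$ near $0$ (small $\delta>0$) your majorant is $+\infty$ for such $c$, while the theorem still asserts the rate. (Relatedly, invoking Proposition \ref{prop:asympMellL2}$(ii)$ uniformly in $X$ is unjustified as such, since it is only an asymptotic in $\gamma$; the same objection applies to inserting the expansion of Proposition \ref{prop:asympMell} under the expectation in your bias term.) The truncated argument confines the offending region to $\{X<\eta^2(\epsilon^{-2}-1)\}$, whose contribution is controlled by the vanishing of $f$ near $0$ rather than by a global moment. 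In short: keep your Parseval/bias-variance skeleton, and your frequency split if you wish, but the proof cannot close without the sample-space truncation and the boundary behaviour of $f$.
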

\begin{proof} See Appendix. \end{proof}
Note that, under Assumptions \ref{ass:Sf} and \ref{ass:Lxi}, (\ref{eqn:ccond}) defines a nonempty interval. Theorem \ref{thm:consistMISE} establishes the convergence to 0 of a weighted Mean Integrated Squared Error (MISE) of the estimator, where the set of values $c$ defining weights $x^{2c-1}$ assuring convergence essentially depends on the assumed negative and positive moments of $f$. The consistency of the estimator in the usual MISE- ($L_2$-)sense follows.

\begin{corollary} \label{cor:optrate} Under Assumptions \ref{ass:iid}-\ref{ass:eta}, with $\alpha > 3/2$ in Assumption \ref{ass:Sf} and $\xi/\cos^2 \theta > 1/2$ in Assumption \ref{ass:Lxi}, the Mellin-Meijer kernel density estimator $\hat{f}(x)$ (\ref{eqn:MKDEconvK})-(\ref{eqn:MKDEK}) is such that
\begin{equation} \E\left(\int_0^\infty \left(\hat{f}(x) - f(x)\right)^2\,dx \right) = O(\eta^4) +O((n\eta)^{-1}) \to 0 \qquad \text{ as } n \to \infty. \label{eqn:optMISE} \end{equation}
\end{corollary}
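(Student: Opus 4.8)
The plan is to observe that the ordinary (unweighted) MISE in (\ref{eqn:optMISE}) is nothing but the weighted MISE of Theorem \ref{thm:consistMISE} evaluated at the particular weight for which the exponent $2c-1$ vanishes, that is, at $c = 1/2$. Since the weight $x^{2c-1}$ then reduces to the constant $1$, the left-hand side of (\ref{eqn:WMISE}) coincides exactly with the left-hand side of (\ref{eqn:optMISE}). Hence the entire task reduces to checking that $c = 1/2$ belongs to the admissible interval (\ref{eqn:ccond}) under the strengthened hypotheses $\alpha > 3/2$ and $\xi/\cos^2 \theta > 1/2$; once this is done, Theorem \ref{thm:consistMISE} applies verbatim.

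First I would confront the lower endpoint of (\ref{eqn:ccond}), which is the maximum of three quantities, and show each is strictly below $1/2$. The bound $2-\alpha < 1/2$ is equivalent to $\alpha > 3/2$, exactly the added moment hypothesis; the bound $\frac{3-2\alpha}{4} < 1/2$ is equivalent to $\alpha > 1/2$, which is implied by $\alpha > 3/2$; and the bound $1 - \frac{\xi}{\cos^2 \theta} < 1/2$ is equivalent to $\frac{\xi}{\cos^2 \theta} > 1/2$, exactly the added kernel hypothesis. Next I would deal with the upper endpoint: $\frac{1}{2} < \frac{3+2\beta}{4}$ reduces to $\beta > -1/2$, automatic since $\beta > 0$, while $\frac{1}{2} < 1 + \frac{\xi}{\sin^2 \theta}$ is automatic since $\xi > 0$ (with the convention $\frac{\xi}{\sin^2 \theta} = +\infty$ at $\theta = 0$). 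Thus $1/2$ sits strictly inside (\ref{eqn:ccond}).

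Plugging $c = 1/2$ into (\ref{eqn:WMISE}) then yields (\ref{eqn:optMISE}) immediately, and the convergence to zero follows at once from Assumption \ref{ass:eta}: $\eta \to 0$ annihilates the $O(\eta^4)$ term and $n\eta \to \infty$ annihilates the $O((n\eta)^{-1})$ term. I do not expect any genuine obstacle here; this is a pure specialisation of Theorem \ref{thm:consistMISE}, and the only real content is the inequality check. Indeed, the two extra hypotheses are tailored precisely so that the lower endpoint of the admissible window (\ref{eqn:ccond}) is pushed below $1/2$, allowing the constant weight to be accommodated --- without them, the interval might exclude $1/2$ and the unweighted MISE would not be directly controllable through Theorem \ref{thm:consistMISE}.
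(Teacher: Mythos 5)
Your proposal is correct and matches the paper's own proof exactly: the paper's argument is the one-line observation that under $\alpha > 3/2$ and $\xi/\cos^2\theta > 1/2$ the value $c=1/2$ lies in the interval (\ref{eqn:ccond}), so Theorem \ref{thm:consistMISE} with unit weight $x^{2c-1}=1$ gives (\ref{eqn:optMISE}). Your endpoint-by-endpoint inequality verification simply makes explicit what the paper leaves to the reader.
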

\begin{proof} If $\alpha > 3/2$ and $\xi/\cos^2 \theta > 1/2$, then $c=1/2$ belongs to (\ref{eqn:ccond}). \end{proof}
Of course, the optimal rate of convergence is achieved for $\eta \sim n^{-1/5}$, which gives 
\[\E\left(\int_0^\infty \left(\hat{f}(x) - f(x)\right)^2\,dx \right) = O(n^{-4/5}), \]
the usual optimal rate of convergence for nonparametric estimation of a univariate probability density under Assumption \ref{ass:iid}. Note that Corollary \ref{cor:optrate} establishes this result for $\alpha > 3/2$, that is, $\E(X^{-3/2}) < \infty$. This requires $f(x) = o(\sqrt{x})$ as $x \to 0$, and in particular, $f(0) = 0$. This is restrictive: for instance, \cite{Chen00} showed that his `modified' Gamma kernel estimator has pointwise (i.e., at some fixed $x$) bias proportional to $xf''(x)$, and pointwise variance proportional to $x^{-1/2}f(x)$, hence deduced its MISE-consistency under the assumptions that $(i)$ $\int (xf''(x))^2\,dx < \infty$ and $(ii)$ $\int x^{-1/2}f(x)\,dx < \infty$. If $(ii)$ is obviously equivalent to $\alpha > 1/2$ in our framework, $(i)$ does not tell much about the moments of $X$. If $\E(X^{-3/2})<\infty$ implies $xf''(x) = o(x^{-1/2})$ as $x \to 0$, there exist distributions with $\int (xf''(x))^2\,dx < \infty$ but with $\E(X^{-3/2}) = \infty$. Those include the Exponential distribution, to cite only one simple example.

\ppn The results presented thus far naturally followed from the properties of the Mellin transform of $f$ {\it inside its strip of holomorphy}. As such, (\ref{eqn:WMISE})-(\ref{eqn:optMISE}) have been proved under proper conditions on the existence of moments of $X$ only, as those define $\Ss_f$. However, those assumptions can actually be relaxed to conditions similar to \cite{Chen00}'s. The condition $c > 2-\alpha$ in (\ref{eqn:ccond}), which requires $\alpha > 3/2$ if one wants $c=1/2$, comes from resorting to the general identity 
\begin{equation} \Ms(xf''(x);z) = z(z-1)\Ms(f;z-1), \qquad \text{ for } z-1 \in \Ss_f \label{eqn:Mxfsec} \end{equation}
in the proof of Theorem \ref{thm:consistMISE}. The point is that `$z-1 \in \Ss_f$' is a sufficient condition for (\ref{eqn:Mxfsec}) to be true, but is not necessary. The simple case of the Exponential density heuristically illustrates the claim. Indeed $f(x) = e^{-x} \indic{x>0}$ has Mellin transform $\Ms(f;z) = \Gamma(z)$ on $\Ss_f = (0,\infty)$. Inverting (\ref{eqn:Mxfsec}) through (\ref{eqn:invMellin}) one has
\begin{equation} xe^{-x} = xf''(x) = \frac{1}{2\pi i} \int_{\Re(z)=c} x^{-z} z(z-1)\Ms(f;z-1) \,dz = \frac{1}{x}\,\frac{1}{2\pi i} \int_{\Re(z)=c-1} x^{-z} z(z+1)\Ms(f;z) \,dz, \label{eqn:xfsec}\end{equation}
for $c-1 \in S_f$, that is, for $c > 1$. However, from (\ref{eqn:CSGam}) we know that, for $\Re(z)\in (-1,0)$, $\Ms(f;z) = \Gamma(z)$ is the Mellin transform of $f_1(x) = e^{-x}+1$, which allows to write $xf_1''(x) = \frac{1}{x}\,\frac{1}{2\pi i} \int_{\Re(z)=c-1} x^{-z} z(z+1)\Ms(f;z) \,dz$ for $c \in (0,1)$. As evidently $f_1'' \equiv f''$, it follows that (\ref{eqn:xfsec}) holds true for $c>0$, and actually, for $c>-1$, as $f''_2 \equiv f''$ as well. 
 
\ppn What theoretically validates this argument is that $\Ms(f;z) = \Gamma(z)$, initially defined as $\int_0^\infty x^{z-1}e^{-x}\,dx$ for $\Re(z) > 0$ only, can be analytically continued on the negative half of the real axis by making repeated use of the identity $\Gamma(z) = (z-1)\Gamma(z-1)$ ($-\Re(z) \notin \N$). This implies that $\Ms(f;z-1)$ in (\ref{eqn:Mxfsec}) is well defined even for $z-1 \notin \Ss_f$. So, more generally, assuming that the {\it analytical continuation} of $\Ms(f;z)$ to the left of $\Ss_f$ is well-behaved in some sense, the proof of Theorem \ref{thm:consistMISE} carries over when relaxing the condition $c >2-\alpha$, and (\ref{eqn:WMISE}) remains valid. Actually, the analytic continuation of a Mellin transform $\Ms(f;z)$ outside its strip of holomorphy gives a much more complete picture of the behaviour of $f$ at 0 or at $\infty$, see \citet[p.86 ff.]{Paris01} or \citet[Theorem 5]{Wong01} for details. Thorough discussion of this aspect is beyond the scope of this paper, nevertheless we can state the following result.

\begin{proposition} \label{cor:optrate2} Under Assumptions \ref{ass:iid}-\ref{ass:eta}, with $\alpha > 1/2$ in Assumption \ref{ass:Sf} and $\xi/\cos^2 \theta > 1/2$ in Assumption \ref{ass:Lxi}, the Mellin-Meijer kernel density estimator $\hat{f}(x)$ (\ref{eqn:MKDEconvK})-(\ref{eqn:MKDEK}) is such that
\begin{equation} \E\left(\int_0^\infty \left(\hat{f}(x) - f(x)\right)^2\,dx \right) = O(\eta^4) +O((n\eta)^{-1}) \to 0 \qquad \text{ as } n \to \infty, \label{eqn:optMISE2} \end{equation}
provided that $\int_0^\infty (x f''(x))^2\,dx < \infty$.
\end{proposition}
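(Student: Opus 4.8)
The plan is to re-run the proof of Theorem \ref{thm:consistMISE} with the weight fixed at $c=1/2$, so that $x^{2c-1}\equiv 1$ and the weighted MISE becomes the genuine MISE appearing in (\ref{eqn:optMISE2}), and then to isolate the single place in that proof that forces $\alpha>3/2$. The weighted MISE splits into integrated variance plus integrated squared bias, each treated through the Mellin--Plancherel identity $\int_0^\infty x^{2c-1}g(x)^2\,dx=\frac{1}{2\pi}\int_{-\infty}^\infty|\Ms(g;c+i\omega)|^2\,d\omega$, i.e.\ as an $L_2$ integral of a Mellin transform along $\Re(z)=c$. First I would check that $c=1/2$ meets every constraint in (\ref{eqn:ccond}) \emph{except} possibly $c>2-\alpha$: indeed $\tfrac12>\tfrac{3-2\alpha}{4}$ and $\tfrac12>1-\tfrac{\xi}{\cos^2\theta}$ are exactly the two standing hypotheses $\alpha>1/2$ and $\xi/\cos^2\theta>1/2$, while $\tfrac12<\tfrac{3+2\beta}{4}$ and $\tfrac12<1+\tfrac{\xi}{\sin^2\theta}$ hold automatically. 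Hence the variance part of the proof of Theorem \ref{thm:consistMISE} transfers verbatim and supplies the $O((n\eta)^{-1})$ term; nothing there needs changing.

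Everything therefore reduces to the integrated squared bias. Writing $\E\hat f(x)=\E_X[X^{-1}L^{(X)}_\eta(x/X)]$ and passing to the Mellin domain via $\Ms(X^{-1}L^{(X)}_\eta(\cdot/X);z)=X^{z-1}\Ms(L^{(X)}_\eta;z)$, then inserting the expansion (\ref{eqn:MLasympsimp}) under the parameterisation (\ref{eqn:coefgamma})--(\ref{eqn:nucoef}) (so that $(\gamma^{(X)}_\eta)^2=\eta^2/(\eta^2+X)=\eta^2/X+O(\eta^4)$), one gets the leading-order bias
\[
\Ms(\E\hat f-f;z)=\frac{\eta^2}{2}\,z(z-1)\,\Ms(f;z-1)+R(\eta,z),
\]
with remainder $R$ controlled by the bound on $\rho$ from Proposition \ref{prop:asympMell}, exactly as in Theorem \ref{thm:consistMISE}. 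The obstruction sits entirely in the factor $\Ms(f;z-1)$: along $\Re(z)=1/2$ one has $\Re(z-1)=-1/2$, which falls to the left of $\Ss_f$ as soon as $1/2<\alpha<3/2$ (since $1-\alpha$ is the rightmost a priori location of the left edge of $\Ss_f$), so $\Ms(f;z-1)$ is no longer given by its defining integral on that line.

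The resolution, following the discussion preceding the statement, is to replace $z(z-1)\Ms(f;z-1)$ by $\Ms(xf'';z)$ through the \emph{analytic continuation} of $\Ms(f;\cdot)$ past the left edge of $\Ss_f$. Identity (\ref{eqn:Mxfsec}) holds by integration by parts on the sub-strip where $z-1\in\Ss_f$; both of its sides are holomorphic on the larger strip to which $\Ms(f;\cdot)$ continues (meromorphically, with poles only at the points prescribed by the small-$x$ asymptotics of $f$, arranged so that the line $\Re(z)=1/2$ avoids them), so by the identity theorem (\ref{eqn:Mxfsec}) persists up to $\Re(z)=1/2$. The hypothesis $\int_0^\infty(xf''(x))^2\,dx<\infty$ is precisely what places $\Ms(xf'';\cdot)$ in $L_2$ on $\Re(z)=1/2$ (Mellin--Plancherel with $c=1/2$), giving
\[
\frac{1}{2\pi}\int_{-\infty}^\infty\left|\Ms(\E\hat f-f;\tfrac12+i\omega)\right|^2 d\omega=\frac{\eta^4}{4}\int_0^\infty(xf''(x))^2\,dx+o(\eta^4)=O(\eta^4).
\]
After confirming, as in Theorem \ref{thm:consistMISE}, that $R$ integrates to a term of order at most $O(\eta^4)$ along $\Re(z)=1/2$, summing the bias and variance contributions yields (\ref{eqn:optMISE2}).

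The hard part will be making the analytic-continuation step rigorous: one must ensure that $\Ms(f;\cdot)$ genuinely continues (holomorphically, or meromorphically with poles off the integration line) from $\Ss_f$ across to $\Re(z)=-1/2$, and that the interchange of the expectation over $X$ with the inverse Mellin integral --- unproblematic inside $\Ss_f$ --- survives this continuation with a uniform-in-$\omega$ bound on $R$. This is exactly the point the text flags as ``beyond the scope''; the operative idea is that $\int_0^\infty(xf'')^2<\infty$ furnishes the square-integrability the Plancherel step needs, while the good behaviour of the continuation of $\Ms(f;z-1)$ is what legitimises its replacement by $\Ms(xf'';z)$.
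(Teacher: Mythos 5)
Your diagnosis of where the proof of Theorem \ref{thm:consistMISE} breaks down is exactly right: at $c=1/2$ the only violated constraint in (\ref{eqn:ccond}) is $c>2-\alpha$, the variance part does go through under $\alpha>1/2$, and the obstruction sits in the factor $\Ms(f;z-1)$ since $\Re(z-1)=-1/2$ lies outside $\Ss_f$ whenever $\alpha<3/2$. But your fix --- analytically continuing $\Ms(f;\cdot)$ across the left edge of $\Ss_f$ and invoking the identity theorem to replace $z(z-1)\Ms(f;z-1)$ by $\Ms(xf'';z)$ --- is not the paper's argument, and, as you yourself concede in your last paragraph, you cannot complete it: nothing in Assumptions \ref{ass:iid}--\ref{ass:eta}, nor in the hypothesis $\int_0^\infty (xf''(x))^2\,dx<\infty$, guarantees that $\Ms(f;\cdot)$ admits a holomorphic (or meromorphic, with poles off the line) continuation past $\Re(z)=1-\alpha$. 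Such a continuation exists only when $f$ has extra asymptotic structure at $0$; the Cauchy--Saalsch\"utz/Exponential discussion in the text is a heuristic for one example, not a tool available for a generic $f$ satisfying the stated assumptions. A proof whose central step is flagged as ``the hard part, beyond the scope'' is a proof with a hole at precisely the point the proposition exists to address.

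The paper's actual proof bypasses continuation entirely, using a device your write-up discards: the truncation indicator already present in the leading bias term. In the bound (\ref{eqn:EFG}), the term \encircle{E} involves $\E\bigl(X_k^{z-2}\indic{X_k \geq \eta^2(\epsilon^{-2}-1)}\bigr)$, not $\E(X_k^{z-2})$. Keeping the indicator, this equals $\Ms\bigl(f(x)\indic{x \geq \eta^2(\epsilon^{-2}-1)};z-1\bigr)$, and the truncated density vanishes identically near $0$ (``flat head''), so its Mellin transform is holomorphic on the whole half-plane $\{\Re(w)<1+\beta\}$ --- in particular on $\Re(w)=-1/2$. No continuation, no identity theorem. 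Parseval's identity (\ref{eqn:Parseval}) then yields
\[
\text{\encircle{E}} \ \leq \ \tfrac{1}{4}\,\eta^4 \int_0^\infty x^{2c-1}\bigl(xf''(x)\bigr)^2 \indic{x \geq \eta^2(\epsilon^{-2}-1)}\,dx \ \leq \ \tfrac{1}{4}\,\eta^4 \int_0^\infty \bigl(xf''(x)\bigr)^2\,dx
\]
at $c=1/2$, which is exactly where the hypothesis enters, giving (\ref{eqn:E}) and hence the proposition. So your closing intuition --- that square-integrability of $xf''$ is what feeds the Plancherel step --- is correct, but the rigorous route to it is the flat head of the truncated density, not a continuation of $\Ms(f;\cdot)$. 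By dropping the indicator and writing the leading bias as $\frac{\eta^2}{2}z(z-1)\Ms(f;z-1)$ (an expression that is literally a divergent integral on $\Re(z)=1/2$ when $\alpha<3/2$), you manufactured the very difficulty the truncation was there to remove.
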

\begin{proof} See Appendix. \end{proof}

\ppn Further support for Proposition \ref{cor:optrate2} is brought by analysing the pointwise properties of the Mellin-Meijer kernel density estimator.

\begin{theorem} \label{thm:bias-var} Under Assumptions \ref{ass:iid}-\ref{ass:eta}, the Mellin-Meijer kernel density estimator $\hat{f}(x)$ (\ref{eqn:MKDEconvK})-(\ref{eqn:MKDEK}) at any $x \in (0,\infty)$ has asymptotic pointwise bias and variance given by 
\begin{align}
 \bias\left(\hat{f}(x)\right) & = \frac{1}{2}\eta^2 x f''(x) + o(\eta^2) \label{eqn:asympbias} \\
\var\left( \hat{f}(x)\right) & = \frac{f(x)}{2\sqrt{\pi} n\eta\sqrt{x}} +o((n\eta)^{-1}), \label{eqn:asympvar}
\end{align}
as $n \to \infty$.
\end{theorem}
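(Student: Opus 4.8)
The plan is to work entirely `in the Mellin world', exploiting the fact that the estimator $\hat{f}$ is built as a Mellin convolution of the empirical measure with the Meijer kernels. First I would compute the expectation of $\hat{f}(x)$. Since the $X_k$ are i.i.d.\ with density $f$, and the $k$-th bump $\frac{1}{X_k}L_\eta^{(k)}(x/X_k)$ has parameters $\gamma_\eta^{(k)},\nu_\eta^{(k)}$ depending on $X_k$ through (\ref{eqn:coefgamma})-(\ref{eqn:nucoef}), I would pass to the Mellin transform. The key observation is that $\E(\hat{f}(x))$ is itself an $\R^+$-supported function of $x$, so I would compute its Mellin transform and then invert. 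The clean step is that, conditionally on $X_k$, the bump is a Mellin convolution $L_\eta^{(k)}\MC\delta_{X_k}$, whose Mellin transform is $X_k^{z-1}\Ms(L_\eta^{(k)};z)$; taking the outer expectation over $X_k$ and using $\gamma_\eta^{(k)}\to 0$ as $\eta\to0$, I would invoke the asymptotic expansion (\ref{eqn:MLasympsimp}) of Proposition \ref{prop:asympMell} (with the choice of $\Delta$ from Remark \ref{rem:delta}) to write $\Ms(L_\eta^{(k)};z)=1+\tfrac{\gamma^2}{2}z(z-1)+\rho$. The subtlety is that $\gamma_\eta^{(k)}$ depends on $X_k$, so the expectation does not factor directly; I would therefore expand and keep track of the leading term.

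For the bias, after substituting $\left.\gamma_\eta^{(k)}\right.^2=\eta^2/(\eta^2+X_k)\sim \eta^2/X_k$ as $\eta\to0$, the leading correction to the Mellin transform of $\E(\hat f)$ should be $\tfrac{\eta^2}{2}z(z-1)\,\E(X^{z-2})=\tfrac{\eta^2}{2}z(z-1)\Ms(f;z-1)$. I would then recognise, via the operational identity (\ref{eqn:Mxfsec}), that $z(z-1)\Ms(f;z-1)=\Ms(xf''(x);z)$, so that inverting the Mellin transform returns $f(x)+\tfrac12\eta^2 xf''(x)+o(\eta^2)$, which is exactly (\ref{eqn:asympbias}). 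The role of $\nu_\eta^{(k)}$ chosen in (\ref{eqn:nucoef}) is precisely to kill the spurious $f$ and $f'$ terms (cf.\ the contrast with (\ref{eqn:asymptbiaslognormkde})), which is why the simplified expansion (\ref{eqn:MLasympsimp}) applies.

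For the variance, I would use $\var(\hat f(x))=\tfrac1n\var\!\left(\tfrac{1}{X_1}L_\eta^{(1)}(x/X_1)\right)\le \tfrac1n\E\!\left(\tfrac{1}{X_1^2}L_\eta^{(1)}(x/X_1)^2\right)$, so the dominant term comes from the second moment of a single bump. The natural tool here is Proposition \ref{prop:asympMellL2}: the Mellin transform of $\left(L_\eta^{(k)}\right)^2$ behaves like $\tfrac{1}{2\sqrt{\pi}\gamma}(1+O(\gamma^2))$ by (\ref{eqn:ML2asymp}). I would express $\E\!\left(\tfrac{1}{X_1^2}L_\eta^{(1)}(x/X_1)^2\right)$ as an expectation over $X_1$ of an inverse-Mellin integral of $\Ms\!\left(\left(L_\eta^{(1)}\right)^2;\cdot\right)$, then substitute the leading asymptotics. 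Because $\gamma_\eta^{(k)}\sim\eta/\sqrt{X_k}$, the $1/\gamma$ factor produces $\sqrt{X_k}/\eta$, and after integrating against the density of $X_1$ near the point relevant to reconstructing $f(x)$, the leading term should collapse to $\frac{f(x)\sqrt{x}}{2\sqrt{\pi}\,\eta}\cdot\frac{1}{x}=\frac{f(x)}{2\sqrt{\pi}\,\eta\sqrt{x}}$ at order $\tfrac1n$, giving (\ref{eqn:asympvar}); the $\sqrt{x}$ in the denominator is the footprint of the $\sqrt{X_k}$ smoothing-transfer choice (\ref{eqn:cvk}).

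The main obstacle I anticipate is making the interchange of expectation over $X_k$ and the inverse-Mellin contour integral rigorous, and controlling the error terms $\rho(\gamma,z)$ and $\omega(\gamma,z)$ uniformly once $\gamma=\gamma_\eta^{(k)}$ is itself random and may approach $0$ at different rates depending on $X_k$ (in particular for $X_k$ near $0$, where $\gamma_\eta^{(k)}$ stays bounded). The error bounds in Propositions \ref{prop:asympMell} and \ref{prop:asympMellL2} hold only for $|z-1|=o(\gamma^{-2})$ (resp.\ $|z-2|=o(\gamma^{-2})$), so I would need to truncate the contour and bound the tail contribution separately, using Assumptions \ref{ass:Sf} and \ref{ass:Lxi} to guarantee that the relevant moments of $X$ and the kernel are finite and that $\Ss_f$ (and the strip of $(L_\eta^{(k)})^2$) contain the contour $\Re(z)=c$ for an admissible $c$. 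This moment-existence bookkeeping, rather than the leading-order computation, is where the real work lies.
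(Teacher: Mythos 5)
Your proposal is correct and follows essentially the same route as the paper: the paper also passes to the Mellin domain, identifies the bias as the inverse Mellin transform of $\E\left(\Ms(L_\eta^{(k)};z)X_k^{z-1}\right)-\Ms(f;z)$ and expands it via Proposition \ref{prop:asympMell} to recognise $z(z-1)\Ms(f;z-1)=\Ms(xf''(x);z)$, and obtains the variance from the single-bump second moment $\E\left(X_k^{z-1}\Ms({L_\eta^{(k)}}^2;z+1)\right)$ via Proposition \ref{prop:asympMellL2}, where the factor $1/\gamma_\eta^{(k)}\sim\sqrt{X_k}/\eta$ produces the half-unit Mellin shift and hence the $1/\sqrt{x}$ factor. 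Your anticipated technical obstacle (truncating the contour and controlling the remainder where $\gamma_\eta^{(k)}$ is random) is precisely what the paper handles in the proof of Theorem \ref{thm:consistMISE}, to which its sketch of this theorem defers.
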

\begin{proof} See Appendix. \end{proof}

\ppn Theorem \ref{thm:bias-var} allows one to write the Asymptotic (i.e., only dominant terms) Mean Integrated Squared Error (AMISE) of estimator (\ref{eqn:MKDEK}) as
\begin{equation} \text{AMISE}(\hat{f}) = \frac{1}{4} \eta^4 \int_0^\infty (xf''(x))^2\,dx + \frac{1}{2\sqrt{\pi}} \frac{1}{n\eta} \int_0^\infty x^{-1/2} f(x)\,dx,  \label{eqn:AMISE} \end{equation}
which is indeed $O(\eta^4) +O((n\eta)^{-1})$ provided that $\int_0^\infty x^{-1/2} f(x)\,dx<\infty$ (i.e., $\alpha > 1/2$) and $\int_0^\infty (xf''(x))^2\,dx < \infty$. Interestingly, (\ref{eqn:asympbias}) and (\ref{eqn:asympvar}) are the same as the asymptotic expressions for the `away-from-the-boundary' bias and variance of \cite{Chen00}'s modified Gamma kernel estimator (which admits a different behaviour in the boundary area as it is manually modified there). The (asymptotic) bias (\ref{eqn:asympbias}) only depends on $f''$ (not on $f$ or $f'$), as opposed to (\ref{eqn:asymptbiaslognormkde}) or the bias of \cite{Chen00}'s original Gamma estimator and modified Gamma estimator at the boundary. The (asymptotic) variance (\ref{eqn:asympvar}) is proportional to $1/\sqrt{x}$. This factor obviously remains unbounded as $x \to 0$ in theory, but tends to $\infty$ much slower than the factor $1/x$ in (\ref{eqn:asymptvarlognormkde}). Note that, if we exactly adapted to the `Haar geometry' of $(\R^+,\times)$ by taking $\chi_\eta^{(k)}  \sim 1/X_k$ for the Meijer kernels as briefly contemplated in Section \ref{subsec:BUTO}, we would produce an estimator such that $\bias\left(\hat{f}(x)\right) \sim \frac{1}{2}\eta^2 f''(x)$ and $\var\left( \hat{f}(x)\right) \sim \frac{f(x)}{2\sqrt{\pi} n\eta}$, the usual pointwise bias and variance asymptotic expressions for the conventional estimation (\ref{eqn:convkde}). For the reasons expounded in Section \ref{subsec:BUTO}, this might not be desirable in this framework, though.

\section{Smoothing parameter selection} \label{sec:smoothpar}

The choice of the smoothing parameter for acurate nonparametric estimation is always crucial, and estimator (\ref{eqn:MKDEK}) is no exception. Hence a data-driven way of selecting a reasonable value of $\eta$ is desirable. Although the classical idea of cross-validation can be easily adapted to this case, Mellin transform ideas again provide a natural framework for deriving an easy {\it plug-in} selector. 

\ppn From (\ref{eqn:WMISE}), (\ref{eqn:asympbias}) and (\ref{eqn:asympvar}), one can write the asymptotic WMISE of the estimator as 
\begin{equation} \text{AWMISE}(\hat{f}) = \frac{1}{4} \eta^4 \int_0^\infty x^{2c+1}f''^2(x)\,dx + \frac{1}{2\sqrt{\pi}} \frac{1}{n\eta} \int_0^\infty x^{2c-3/2} f(x)\,dx,  \label{eqn:AWMISE} \end{equation}
for any $c$ in (\ref{eqn:ccond}). Balancing the two terms, one easily obtains the asymptotically optimal value of $\eta$:
\begin{equation} \eta_{\text{opt},c} = \left(\frac{(2\sqrt{\pi})^{-1} \int_0^\infty x^{2c-3/2}f(x)\,dx}{\int_0^\infty x^{2c+1}f''^2(x)\,dx} \right)^{1/5} n^{-1/5}. \label{eqn:etaopt} \end{equation}
Plug-in methods attempt to estimate the unknown factors in (\ref{eqn:etaopt}) in order to produce an approximation of $\eta_{\text{opt},c}$. If estimating $ \int_0^\infty x^{2c-3/2}f(x)\,dx = \E(X^{2c-3/2})$ by $\frac{1}{n} \sum_{k=1}^n X_k^{2c-3/2}$ is straightforward, estimating the denominator involving $f''$ is less obvious. For conventional kernel estimation, this step usually requires estimating higher derivatives of $f$, which in turn requires the selection of pilot smoothing parameters and/or resorting to a `reference distribution', see e.g.\ \cite{Sheather91}.

\ppn Here, combining (\ref{eqn:Mxfsec}) and (\ref{eqn:Parseval}) yields
\begin{equation} \int_0^\infty x^{2c+1}f''^2(x)\,dx = \frac{1}{2\pi} \int_{\Re(z)=c}  \left|z(z-1) \Ms(f;z-1) \right|^2\,dz, \label{eqn:intbias2}\end{equation}
for $c -1 \in \Ss_f$. Of course, $\Ms(f;z-1) = \E(X^{z-2})$ can be naturally estimated by $\Ms(\P_n;z-1)=n^{-1} \sum_{k=1}^n X_k^{z-2}$. Now, if $z=c + i \omega$, $|z(z-1)|^2 = (c(c-1)-\omega^2)^2+(2c-1)^2 \omega^2$, and
\begin{align*}
\left|\sum_{k=1}^n X_k^{z-2} \right|^2 & = \sum_{k=1}^n X_k^{z-2} \times \sum_{k'=1}^n X_{k'}^{z^*-2} \qquad \qquad \text{ ($\cdot^*$ denotes complex conjugation) }\\
& = \sum_k \sum_{k'} (X_kX_{k'})^{c-2} \left(\frac{X_k}{X_{k'}}\right)^{i\omega} \\
& = \sum_k \sum_{k'} (X_kX_{k'})^{c-2} \cos\left(\omega \log \frac{X_k}{X_{k'}} \right).
\end{align*} 
This suggests to approximate (\ref{eqn:intbias2}) by
\[\widehat{I}_c(T) \doteq \frac{1}{2\pi n^2}\sum_k \sum_{k'} (X_kX_{k'})^{c-2}  \int_{-T}^T \left((c(c-1)-\omega^2)^2+(2c-1)^2 \omega^2\right) \cos\left(\omega \log \frac{X_k}{X_{k'}} \right) \,d\omega,\]
for some value $T$. Note that the antiderivative of $\left((c(c-1)-\omega^2)^2+(2c-1)^2 \omega^2\right) \cos\left(\omega \log \frac{X_k}{X_{k'}} \right)$ is available in closed form, which makes evaluating the integral very easy. The only stumbling stone is that the integral actually diverges for $T \to \infty$, which is not surprising here given that it would essentially reflect the integrated squared `second derivative' of $\P_n = n^{-1} \sum_{k} \delta_{X_k}$. It is, therefore, paramount to select an appropriate value of $T$.

\ppn A thoughtful choice can be made by noting from the definition (\ref{eqn:Mellindef}) that, for a fixed $\Re(z) = c$, $\Ms(f;z)$ is symmetric around $\Im(z) = 0$, i.e.\ the real axis, and always reaches its maximum at $\Im(z) = 0$. In addition, $\Ms(f;z)$ typically tends to 0 quickly as one moves away from the real axis. In particular, $\Gamma(z)$ is known to be $O(e^{-\frac{1}{2}\pi |z|})$ as $\Im(z) \to \infty$ \citep[Lemma 3.2]{Paris01}, and most of the common densities on $\R^+$, being essentially Meijer densities as observed in Section \ref{subsec:Meijerdens}, show similar Gamma factors in their Mellin transform. It turns out that $\Ms(\P_n;z)$ is remarkably accurate at reconstructing $\Ms(f;z)$ over a substantial set of values of $\Im(z)$ around the real axis, that is, where it matters. The approximation badly deteriorates as $\Im(z)$ grows, but there we know that $|\Ms(f;z)| \simeq 0$ anyway. This is illustrated in Figures \ref{fig:MexLN} and \ref{fig:MexExp} for the case of the log-Normal distribution and the Exponential distribution.
 
\begin{figure}[h]
\centering
\includegraphics[width=0.7\textwidth]{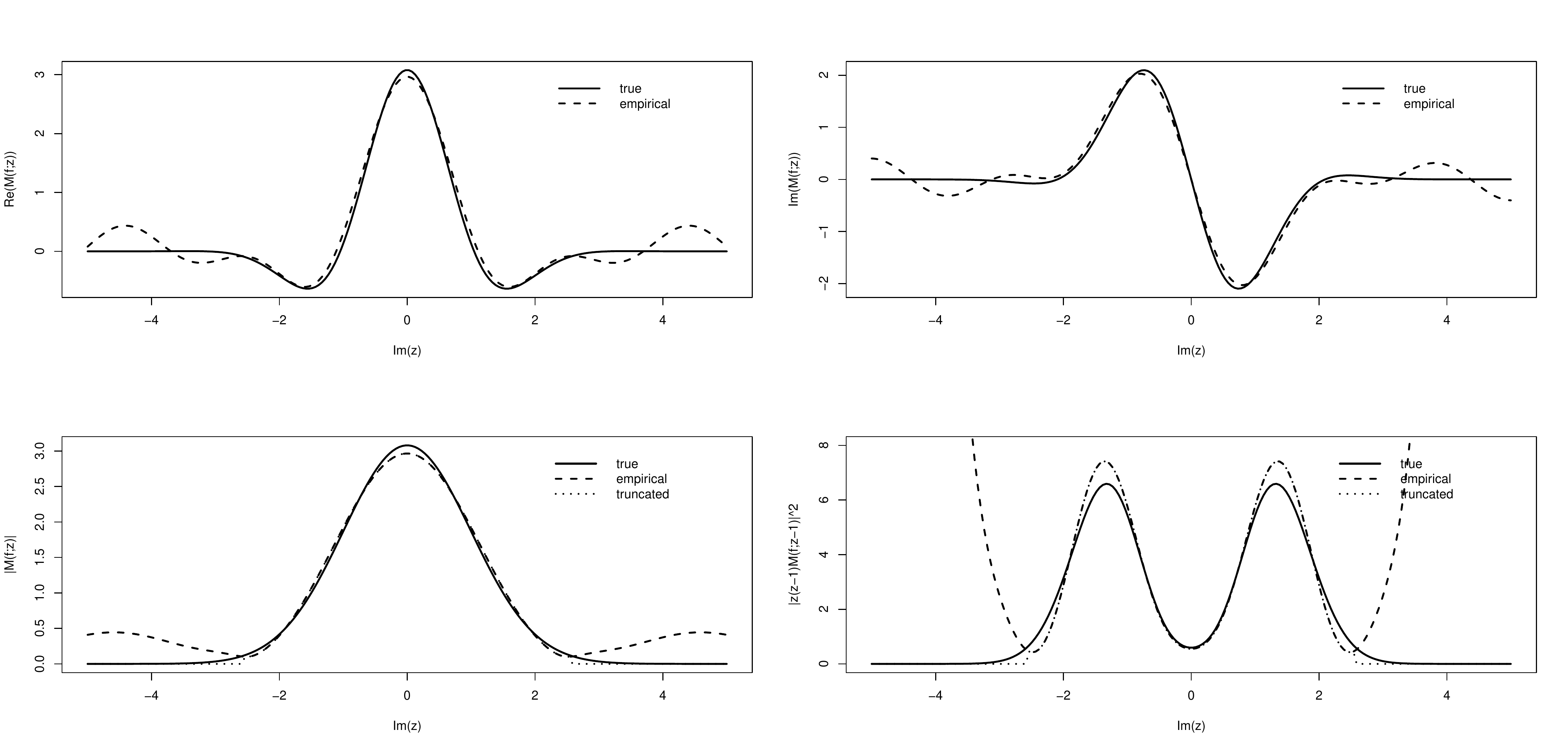}
\caption{$\Re(\Ms(f;z))$, $\Im(\Ms(f;z))$, $|\Ms(f;z)|$ and $|z(z-1)\Ms(f;z-1)|^2$ along the vertical line $\Re(z) = 1/2$ for $f$ the standard log-Normal density (plain line). Dashed lines show the empirical approximations $\Re(\Ms(\P_n;z))$, $\Im(\Ms(\P_n;z))$, $|\Ms(\P_n;z)|$ and $|z(z-1)\Ms(\P_n;z-1)|^2$ from a typical sample of size $n=500$. Dotted lines show the truncated versions of the last 2 when $\Ms(\P_n;z)$ is set to 0 outside $(-T_0,T_0)$, where $T_0$ is the location of the first local minimum of $|\Ms(\P_n;z)|$ away from $\Im(z) =0$.}
\label{fig:MexLN}
\end{figure}

\begin{figure}[h]
\centering
\includegraphics[width=0.7\textwidth]{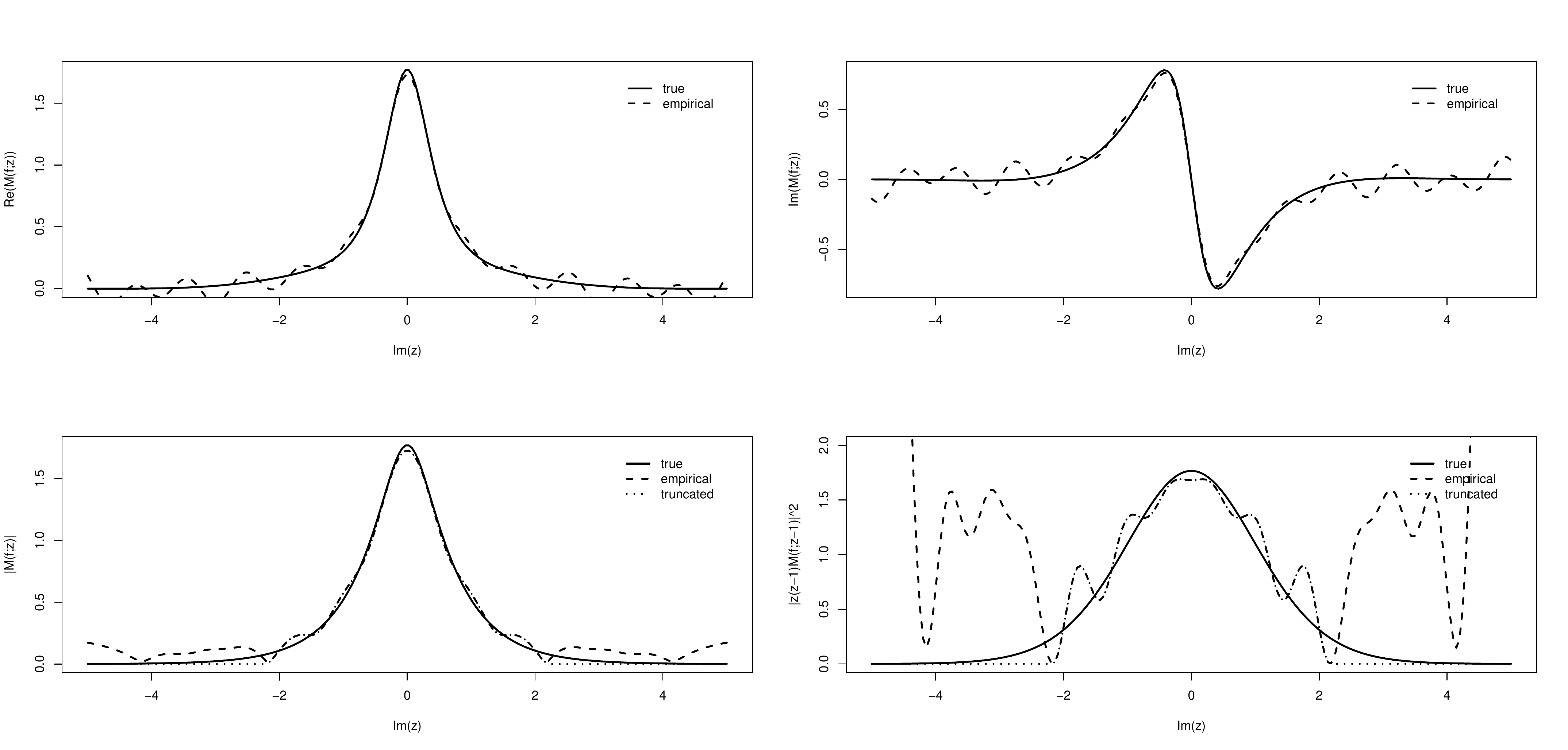}
\caption{$\Re(\Ms(f;z))$, $\Im(\Ms(f;z))$, $|\Ms(f;z)|$ and $|z(z-1)\Ms(f;z-1)|^2$ along the vertical line $\Re(z) = 3/2$ for $f$ the standard Exponential density (plain line). Dashed lines show the empirical approximations $\Re(\Ms(\P_n;z))$, $\Im(\Ms(\P_n;z))$, $|\Ms(\P_n;z)|$ and $|z(z-1)\Ms(\P_n;z-1)|^2$ from a typical sample of size $n=500$. Dotted lines show the truncated versions of the last 2 when $\Ms(\P_n;z)$ is set to 0 outside $(-T_0,T_0)$, where $T_0$ is the location of the first local minimum of $|\Ms(\P_n;z)|$ away from $\Im(z) =0$.}
\label{fig:MexExp}
\end{figure}

\ppn A reasonable choice for $T$ seems, therefore, the location, say $T_0$, of the first local minimum in $|\Ms(\P_n;z)|$ away from $\Im(z) =0$. Typically, this is where the `empirical' oscillations which heavily affect further approximations, start. It also occurs where the true $|\Ms(f;z)|$ is already `small', hence neglecting any contribution to (\ref{eqn:intbias2}) made for $\Im(z) > T_0$ is unlikely to change the outcome dramatically; see Figures \ref{fig:MexLN} and \ref{fig:MexExp}. This suggests to take, finally,
\begin{equation} \eta = \left(\frac{(2\sqrt{\pi})^{-1} \frac{1}{n} \sum_{k=1}^n X_k^{2c-3/2}}{\widehat{I}_c(T_0)} \right)^{1/5} n^{-1/5}. \label{eqn:etaopthat} \end{equation}
The value of $c$ should, of course, be chosen in (\ref{eqn:ccond}), to guarantee that all involved quantities are finite. Taking $c=3/2$ seems to be a reasonable default value, as it always belongs to (\ref{eqn:ccond}) under the mild moment conditions $\alpha \geq 1/2$ and $\beta \geq 3/2$ (and $\xi \geq 1/2$). If the behaviour of $f$ at 0 allows it, one can also take $c=1/2$, which would approximate the usual MISE-optimal bandwidth. In any case, (\ref{eqn:etaopthat}) is shown to consistently produce reliable estimates in the simulation study in the next section.

\section{Simulation study} \label{sec:sim}

In this section the practical performance of the Mellin-Meijer kernel density estimator (\ref{eqn:MKDEK}) is analysed through simulations. Inspired by \cite{Bouezmarni05}, we consider the following 10 test densities (refer to Table \ref{tab:meijerdistr} for parameterisation), shown in Figure \ref{fig:densities}:
\begin{enumerate}[(1)]\itemsep0em 
	\item the standard log-Normal density;
	\item the Chi-squared density with $k=1$ degree of freedom;
	\item the Nakagami density with $m = 1$ and $\Omega=2$;
	\item the Gamma density with $\alpha = 2$ and $\beta = 1/2$;
	\item the Gamma density with $\alpha = 0.7$ and $\beta = 1/2$;
	\item the standard Exponential density;
	\item the Generalised Pareto density with $\sigma=2/3$ and $\zeta = 2/3$;
	\item the inverse Weibull density with $\mu=1$ and $\eta = 2$; 
	\item a mixture of Gamma densities: $2/3 \times \Gamma(0.7,1/2) + 1/3 \times \Gamma(20,5)$;
	\item a mixture of log-Normal densities: $2/3 \times \text{log-}\Ns(0,1) + 1/3 \times \text{log-}\Ns(1.5,0.1)$.
\end{enumerate}

\begin{figure}[h]
	\centering
	\includegraphics[width=1\textwidth]{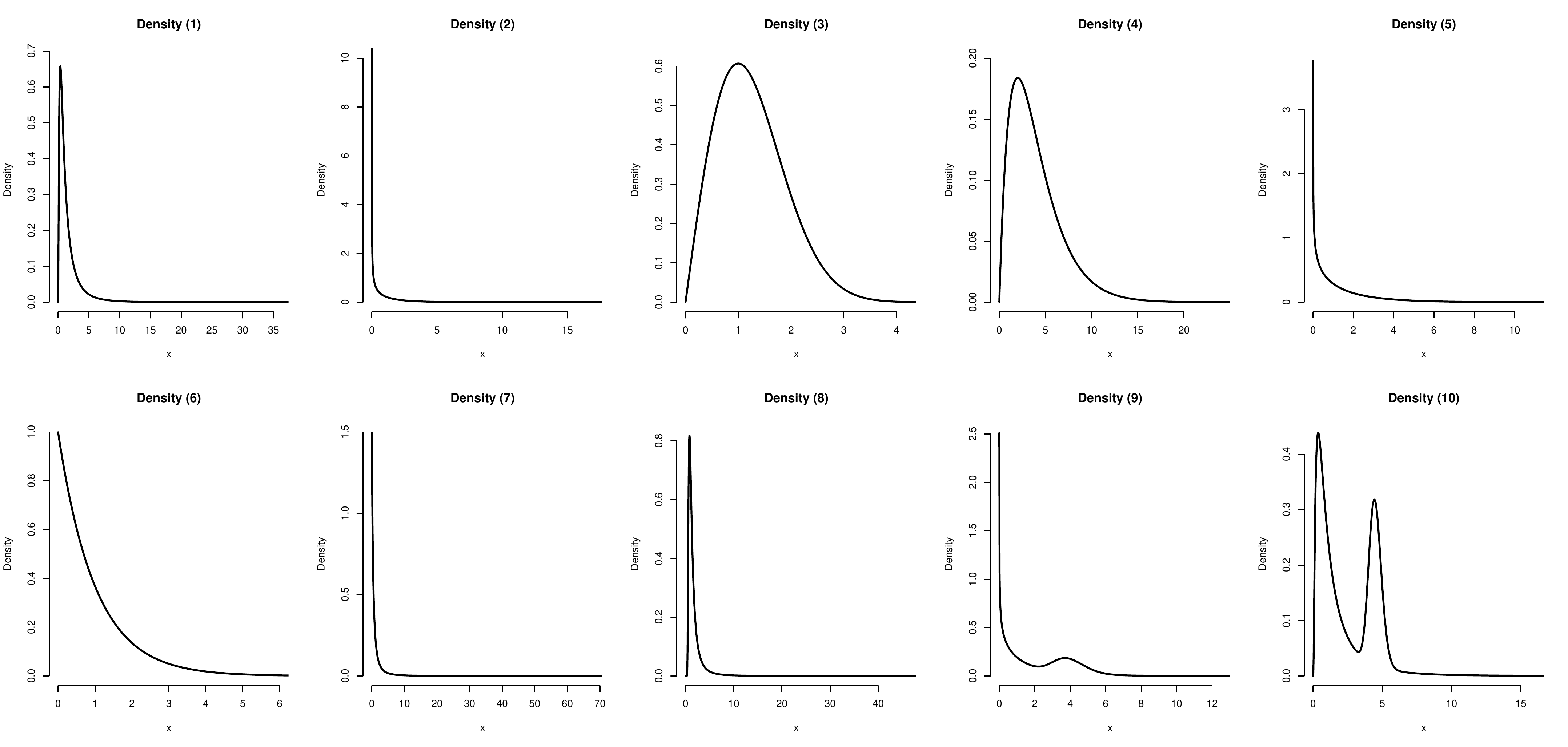}
	\caption{Densities used in the simulation study.}
	\label{fig:densities}
\end{figure}

\ppn These 10 densities exhibit various behaviours at 0 (light head: (1), (8), (10); fat head ($\E(X^{-3/2})<\infty$): (3), (4); {\it very} fat head ($\E(X^{-3/2})=\infty$): (6), (7) (bounded), (2), (5), (9) unbounded) and in the tail (light tail: (1), (2), (3), (4), (5), (6), (9), (10); fat tail: (7), (8)). From each of these distributions, independent samples of size $n=100$ and $n=500$ were generated, with $M=1,000$ Monte Carlo replications for each sample size. On each of them, the density was estimated by the estimator (\ref{eqn:MKDEK}), where the basic parameters $(\xi,\theta)$ of the Meijer kernel were set to $(1,\pi/4)$, $(1/2,0)$ and $(2,\pi/2)$ (`MM-1', `MM-2' and `MM-3' in Table \ref{tab:100}). For each case, the smoothing parameter $\eta$ was selected according to (\ref{eqn:etaopthat}), where three values $c$ were tested: $c=1/2$, $c=1$ and $=3/2$ (MM-x-12, MM-x-22 and MM-x-32)  in Table \ref{tab:100}).

\ppn For comparison, we have also included in the study \cite{Chen00}'s `modified' Gamma kernel estimator (`Gamma' in Table \ref{tab:100}), whose bandwidth was chosen following the reference rule prescribed in \citet{Hirukawa14}. This estimator was computed using the {\tt dbckden} function in the R package {\tt evmix}.

\ppn The densities were estimated on a fine grid of $N=1,000$ points between $q_{0.9999}/1000$ and $q_{0.9999}$, where $q_{0.9999}$ is the quantile of level 0.9999 of the relevant density. The Mean Integrated Squared Error (MISE) of a given estimator $\hat{f}$ was then approximated by
\begin{align*} \widehat{\text{MISE}}\left(\hat{f}\right) & =  \frac{1}{M} \sum_{q=1}^{M} \frac{1}{N} \sum_{i=1}^{N} \left(\hat{f}\left(\frac{i \times q_{0.9999}}{N}\right)-f\left(\frac{i\times q_{0.9999}}{N}\right)\right)^2, \end{align*}
where $M=1,000$ is the number of Monte Carlo replications. The results are reported in Table \ref{tab:100} for $n=100$. The results for $n=500$ show a very similar pattern and are omitted here. For ease of reading and interpretation, all the values in Table \ref{tab:100} are relative to the (approximated) MISE of the Gamma kernel estimator, which is taken as benchmark owing to its central and reference role within the asymmetric kernel density estimators. For reference, its effective MISE ($ \times 10^4$) is reported in italics in the second row of the table (which are, therefore, {\it not} on the same scale as the other values).

\begin{table}[H]
	
	\small
\begin{tabular}{||c | c c c c c c c c c c || }
	\hline \hline
	& Dens 1 & Dens 2 & Dens 3 & Dens 4 & Dens 5 & Dens 6 & Dens 7 & Dens 8 & Dens 9 & Dens 10  \\
	\hline 
Gamma&\uline{1.0000}&1.0000&1.0000&1.0000&\uline{1.0000}&\bf 1.0000&\bf 1.0000&1.0000&\bf 1.0000&1.0000 \\
 & \it 2.52 & \it 15.04 & \it 27.89 & \it 1.59 & \it 5.37 & \it 9.39 & \it 0.02& \it  3.08 & \it 6.70 & \it 8.04 \\
\hdashline
MM-1-12&1.1423&60.5327&\uline{0.8151}&0.8668&16.3165&7.1636&7.1307&0.5167&10.5386&\bf 0.7328 \\
MM-1-22&0.9929&6.0001&0.9142&0.8704&3.0499&1.7499&1.8415&\bf 0.4838&1.7679&1.0435 \\
MM-1-32&1.0267&{\bf 0.4933}&0.8263&{\bf 0.8310}&\uline{0.9904}&1.4479&1.1517&0.5052&\uline{1.0581}&1.5039 \\
MM-2-12&1.1365&60.5346&0.8712&\uline{0.8507}&16.2919&7.0522&7.1313&0.5144&10.5138&\uline{0.7334} \\
MM-2-22&1.0608&5.9855&1.0394&0.9920&2.9871&1.6653&1.8440&\uline{0.4851}&1.7170&1.1043 \\
MM-2-32&1.1962&0.5274&{\bf 0.8116}&0.8611&1.1069&1.5037&\uline{1.0181}&0.5304&1.1575&1.6735 \\
MM-3-12&1.1459&60.5320&\uline{0.8235}&0.8812&16.3201&7.1824&7.1304&0.5178&10.5421&0.7335 \\
MM-3-22&{\bf 0.9865}&6.0018&0.9144&0.8738&3.0635&1.7788&1.8405&0.4848&1.7790&1.0352 \\
MM-3-32&\uline{1.0057}&0.4975&0.8402&0.8466&{\bf 0.9898}&1.4613&1.1706&0.5034&\uline{1.0502}&1.4776 \\
\hline \hline
\end{tabular}
\caption{(approximated) MISE relative to the MISE of the modified Gamma kernel estimator (second row), $n=100$. Bold values show the minimum MISE for the corresponding density (non-significantly different values are underlined).}
\label{tab:100}
\end{table}

\ppn Table \ref{tab:100} confirms the aptitude of Mellin-Meijer kernel estimation. Specifically, there is a MM-estimator which outperforms (Densities 2, 3, 4, 8, 10), sometimes by a large extent (half MISE for Densities 2 and 8), or is on par with (Densities 1, 5, 7, 9) the Gamma kernel estimator. A notable exception is Density 6 (Exponential), for which the Gamma estimator does better. The `modified' Gamma kernel estimator is actually so designed for staying bounded at $x=0$ \citep[p.\,473]{Chen00}. So it is especially good at estimating densities such as the Exponential. This may sometimes be counterproductive, though, see next Section. The MM-kde is not doing bad either, in any case (for $c=3/2$, its MISE is less than 1.5 the MISE of the Gamma estimator). 

\ppn The results also show that the choice of the parameters $\xi$ and $\theta$ has little influence on the MISE of the estimator, although it may have a more important effect on a particular estimate as individual inspection reveals. As a guideline, it seems better to use $\xi=1/2$ and/or $\theta=0$ when $f$ is suspected to be positive or unbounded at $x=0$, and $\theta=\pi/2$ when $f$ is expected to have a fat tail. If both, the `default' choice $\pi/4$ works fine with $\xi$ `small', for instance $\xi=1/2$. 

\ppn The parameter which does have a great impact on the final estimate is, as usual, the smoothing parameter $\eta$. The results show that the selector (\ref{eqn:etaopthat}) is good at picking a right value of $\eta$ if used with an appropriate value of $c$. In particular, we get huge MISE's for Densities 2, 5, 6, 7 and 9 if (\ref{eqn:etaopthat}) is computed with $c=1/2$. Those are the densities such that $\E(X^{-3/2}) = \infty$. As (\ref{eqn:intbias2}) involves $\Ms(f;z-1) = \E(X^{z-2})$ along the line $\Re(z)=c$, the empirical $\widehat{I}_c(T)$ is huge for $c=1/2$, which obviously produces a heavily undersmoothed bandwidth $\eta$. For those densities, the selector is doing very good with $c=3/2$. For the other densities, the value of $c$ is less important. Usually $c=3/2$ works well in most situations.

\section{Real data analyses} \label{sec:realdat}

This section illustrates the performance of the Mellin-Meijer kernel estimator (\ref{eqn:MKDEK}) when estimating two $\R^+$-supported densities from real data. The first data set is the `suicide' data set,\footnote{This data set is directly available from the R package {\tt bde}, among others.} which gives the lengths (in days) of $n=86$ spells of psychiatric treatment undergone by patients used as controls in a study of suicide risks. Originally reported by \cite{Copas80}, it was studied among others in \cite{Silverman86} and \cite{Chen00} in relation to boundary issues: indeed, visual inspection (raw data at the bottom of the graph, histogram) reveals that the density should be positive, if not unbounded, at $x=0$, making the conventional estimator (\ref{eqn:convkde}) clearly unsuitable. The anticipated `fat head' suggests the choice $\theta=0$ and $\xi =1/2$ for the Meijer kernels. The smoothing parameter returned by (\ref{eqn:etaopthat}) with $c=3/2$ is $\eta = 4.74$. Figure \ref{fig:suicide} (left panel) shows the estimated density. The estimate shows a spike at the 0 boundary, which is easily understood. There are 3 observations exactly equal to 1 in the data set, and at this scale, this is pretty much `on the boundary'. Hence the estimator attempts to put a positive probability mass atom at 0, producing the spike. Away from the boundary, the estimate decays readily and smoothly.

\begin{figure}[h]
\centering
\includegraphics[width=0.8\textwidth]{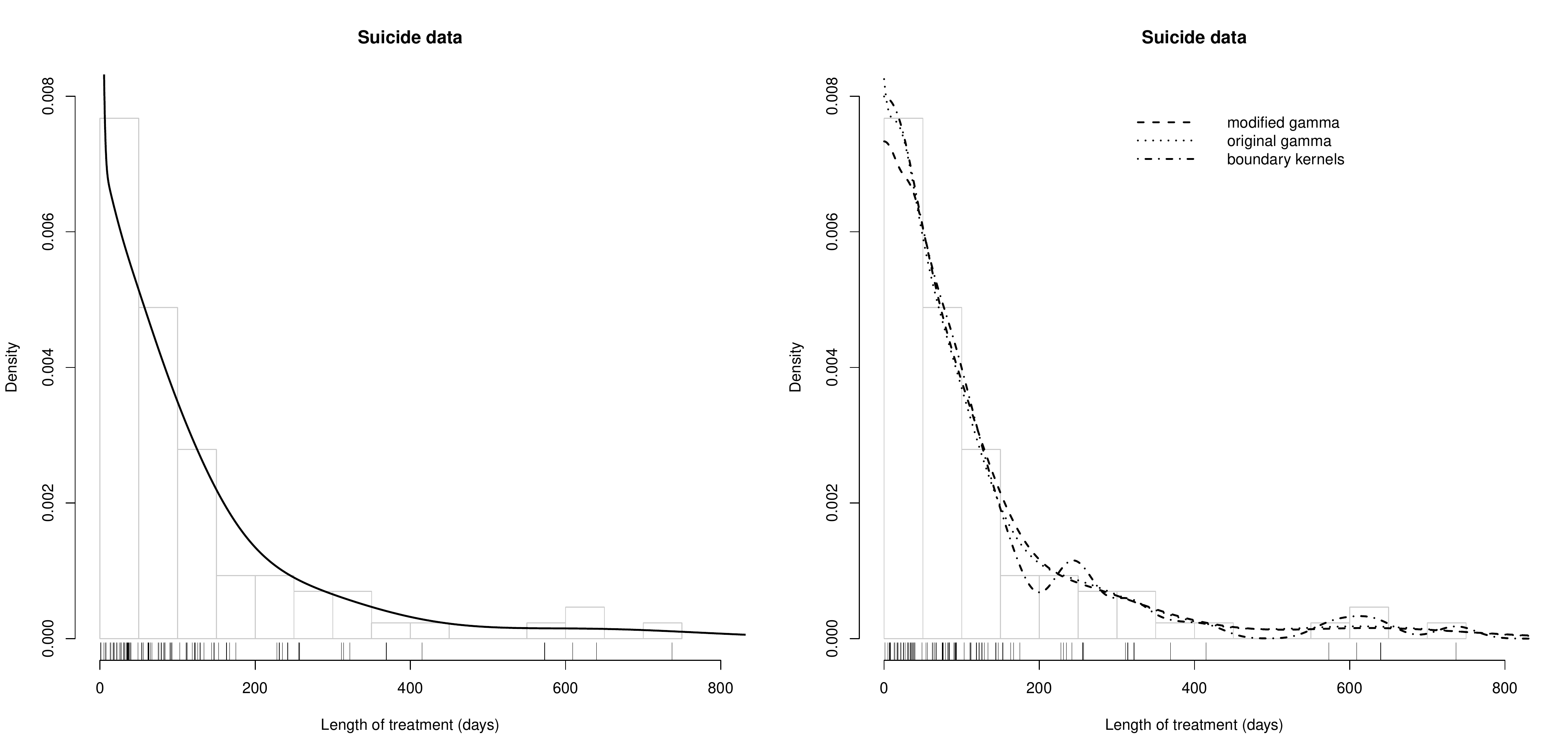}
\caption{`Suicide' data set: Mellin-Meijer kernel density estimator with $\theta = 0$, $\xi =1/2$ and $\eta =4.74$ (left panel); Two Gamma estimators and a `boundary-corrected' conventional kernel estimator (right panel).}
\label{fig:suicide}
\end{figure}

\ppn For comparison, the two Gamma kernel estimators (`original' (\ref{eqn:gammakde}) and `modified'), with bandwidths chosen by reference rule \citep{Hirukawa14}, as well as the `boundary-corrected' conventional estimator \citep{Jones96} with \cite{Sheather91}'s bandwidth, are shown in the right panel. While the Gamma kernel estimators behave very similarly to the Mellin-Meijer kernel estimator in the tail, their behaviour at the boundary is not satisfactory. The original Gamma shows an inelegant kink, whereas the modified Gamma seems to underestimate $f$ there, compared to the other estimates and the histogram. This is typical of the modified Gamma estimator, as discussed in \cite{Malec12}. The boundary-corrected kernel estimate fails to show a real peak at $x=0$, and exhibits numerous `spurious bumps' in the right tail for $x>200$. 

\ppn In the second example we estimate the World Distribution of Income. Estimating such distribution is important as various measures of growth, poverty rates, poverty counts, income inequality or welfare at the scale of the world are based on it \citep{Pinkovskiy09}. We obtained data\footnote{Data available on request.} about the GDP per capita (in constant 2000 international dollars) of $n=182$ countries in 2003 from the World Bank Database. Raw data are shown in Figure \ref{fig:WDI} along with an histogram and the estimated density by the MM-kernel estimator. We set $\theta = \pi/4$ and $\xi = 1$ (`default' choice), and the value returned by (\ref{eqn:etaopthat}) with $c=3/2$ was $\eta = 28.54$. 

\begin{figure}[h]
\centering
\includegraphics[width=0.5\textwidth]{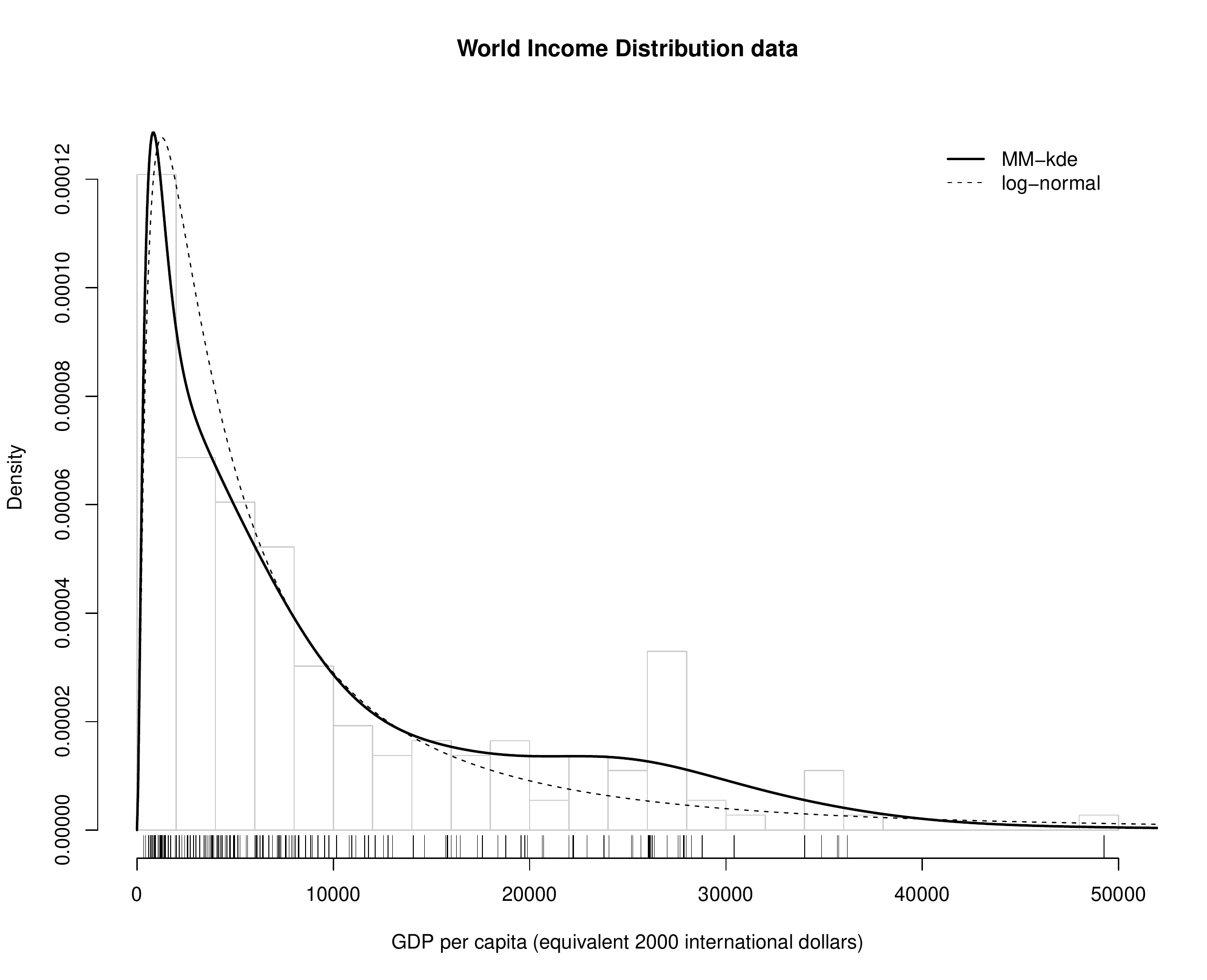}
\caption{`World Distribution of Income' data set: Mellin-Meijer kernel density estimator with $\theta = \pi/4$, $\xi =1$ and $\eta =28.54$ (plain line) and Maximum Likelihood log-Normal parametric fit (dashed line).}
\label{fig:WDI}
\end{figure}

\ppn A log-Normal parametric density, fitted by Maximum Likelihood ($\hat{\mu} = 8.58$, $\hat{\sigma} = 1.20$), is also shown in Figure \ref{fig:WDI}. \cite{Pinkovskiy09} strongly advocated in favour of the log-Normal distribution for modelling these data. However, the nonparametric, (mostly) unconstrained MM-estimate reveals that the peak close to 0 is actually narrower than the `log-Normal peak', whereas there are much more countries with GDP per capita in the range 15,000 - 40,000 than what the log-Normal distribution prescribes. In other words, analysis through the log-Normal model is likely to underestimate poverty and income inequality at the world level. See further discussion in \citet[Section 4]{Dai10}.

\section{Concluding remarks and perspectives} \label{sec:ccl}

Within his seminal works on compositional data, i.e., data living on the simplex, \citet[Section 1.8.1]{Aitchison05} already noted: ``{\it For every sample space there are basic group operations which, when recognized,
dominate clear thinking about data analysis.}'' He continued: ``{\it In $\R^d$, the two operations, translation and scalar multiplication, are so familiar that their fundamental role is often overlooked}'', implying that, when not in $\R^d$ (in his case: in the simplex), there is no reason to blindly stick to those operations. The methodology developed in this paper perfectly aligns with this stance. It has apparently been largely overlooked in earlier literature that the `boundary issues' of the conventional kernel density estimator find their very origin in that $\R^+$ equipped with the addition $+$ is not a group. Noting that the natural group operation on $\R^+$ is the multiplication $\times$, we have investigated a new kind of kernel estimation for $\R^+$-supported probability densities which achieves smoothing through `multiplicative dilution', as opposed to `additive dilution' for the conventional kernel estimator. 

\ppn The construction gives rise to an estimator which makes use of asymmetric kernels, although in a different way to most of other estimators known under that name, such as the Gamma kernel estimator \citep{Chen00}. Unlike those competitors, our estimator is based on a valid smoothing operation on $\R^+$, namely the Mellin convolution, which avoids any inconsistency in the definition and the behaviour of the estimator. Owing to the strong connection between the Mellin convolution and Meijer's $G$-functions, we have proposed to use so-called Meijer densities as kernels. Meijer distributions form a huge class of distributions supported on $\R^+$ which includes most of the classical distributions of interest, and have tractable Mellin transforms given as a product of Gamma functions. This produces an integrated theory for such `{\it Mellin-Meijer kernel density estimation}', with general features no more specific to a particular choice of kernel. The numerous pleasant properties of the estimator have been expounded in the paper.

\ppn The idea can be extended to more general settings in a straightforward way. Suppose that the density of a random variable living on a given domain $D$ is to be estimated from a sample. If $D$ can be equipped with an operation $\bullet$ making  $(D,\bullet)$ a group, then a natural kernel estimator can easily be constructed by diluting any observation through $\bullet$-convoluting it with a random disturbance in $D$. For instance, \citet[Section 2.8]{Aitchison86} defined the perturbation operator $\oplus$ as the fundamental group operation on the simplex. Thus, proper kernel density estimation on the simplex (and that includes univariate density estimation on $[0,1]$) should be performed by `$\oplus$-dilution' of each observation. This will be investigated in more details in a follow-up paper.

\ppn Interestingly, \citet[Section 2.4.2]{Aitchison05} already introduced the Mellin transform as the suitable analytical tool for simplicial distributions. More generally, the Mellin transform of a probability density $f$ ought to be a fundamental function in statistics. In some sense, it is more natural than the characteristic function (the Fourier transform of $f$) itself, as it just explicitly returns the moments of $f$ (real, complex, integral and fractional) -- \cite{Nair39} initially called it the ``{\it moment function}''. It is, therefore, rather surprising that Mellin-inspired procedures have stayed this inconspicuous in the statistical literature so far. Historically, one can find papers investigating statistical applications of the Mellin transform only intermittently over decades \citep{Epstein48,Levy59,Dolan64,Springer66,Springer70,Lomnicki67,Subrahmaniam70} or \cite{Gray88}. Only recently has the Mellin transform made a (discreet) resurgence in the statistical literature, e.g.\ in \cite{Tagliani01}, \cite{Nicolas02}, \cite{Cottone10}, \cite{Balakrishnan14} or \cite{Belomestny15,Belomestny16}. Those papers testify of the appropriateness of the Mellin transform and Mellin convolution in any multiplicative framework, such as problems of multiplicative censoring for instance. We hope that the present paper will humbly contribute to that resurgence.

\appendix 

\section*{Appendix} \label{sec:app}

\section{Further properties of Mellin transforms} \label{app:A}

\subsection{Operational properties} \label{app:A1}

Further properties of the Mellin transform include:
\begin{align}
\Ms\left(\sum_{q=1}^Q a_q f_q;z\right) & = \sum_{q=1}^Q a_q \Ms(f_q;z) & z \in \bigcap_{q=1}^Q \Ss_{f_q}, a_1,\ldots, a_Q \in \R^+ \label{eqn:A1} \\
 \Ms(f(ax);z) & = a^{-z} \Ms(f;z)  & z \in \Ss_f, a \in \R^+  \label{eqn:A2} \\
 \Ms(f(x^a);z) & = \frac{1}{|a|} \Ms(f;\frac{z}{a}) & \frac{z}{a} \in \Ss_f, a \in \R_0  \label{eqn:A3} \\
\Ms(x^y f(x);z) & = \Ms(f;z+y) & z+y \in \Ss_f, y \in \C  \label{eqn:A4} \\
\Ms(\log^n(x) f(x);z) & = \frac{d^n\Ms(f;z)}{dz^n} & z \in \Ss_f, n \in \N  \label{eqn:A5}\\
\Ms\left(\frac{d^n f}{dx^n}(x);z\right) & = (-1)^n (z-n)(z-n+1)\ldots(z-1) \Ms(f;z-n) & z-n \in \Ss_f, n \in \N \label{eqn:A6}\\
\Ms\left(x^n\frac{d^n f}{dx^n}(x);z\right) & = (-1)^n z(z+1)\ldots(z+n-1) \Ms(f;z) & z \in \Ss_f, n \in \N \label{eqn:A7} \\
\Ms\left(\left( x\frac{d }{dx}\right)^n f(x);z\right) & = (-1)^n z^n \Ms(f;z) & z \in \Ss_f, n \in \N, \label{eqn:A8}
\end{align}
where $\left( x\frac{d }{dx}\right)$ is the differential operator. Proofs of these results can be found in the above-mentioned references. In addition, the Mellin version of Parseval's identity \citep[Equation (3.1.15)]{Paris01} reads
\begin{equation} \int_0^\infty x^{2c-1} f^2(x) \,dx = \frac{1}{2\pi} \int_{\Re(z) = c} |\Ms(f;z)|^2\,dz, \label{eqn:Parseval} \end{equation}
for any $c \in \Ss_f$. 

\ppn The following useful result about Mellin transforms of probability densities easily follows from these properties.

\begin{lemma} \label{prop:scalpow} Let $X$ be a continuous positive random variable with density $f_X$ whose Mellin transform is $\Ms(f_X;z)$ on the strip of holomorphy $\Ss_{f_X} = \{ z\in \C: a<\Re(z) < b \}$, for some $a<1<b$. Then, the random variable $Y = \nu X^\xi$, where $\nu \in \R_0^+$ and $\xi \in \R$, has a density $f_Y$ whose Mellin transform is $\Ms(f_Y;z) = \nu^{z-1} \Ms\left(f_X;1+ \xi(z-1)\right)$ on $\Ss_{f_Y} = \{z \in \C: 1- \frac{1-a}{\xi} < \Re(z) < 1+\frac{b-1}{\xi}\}$ ($\xi>0$) or $\Ss_{f_Y} = \{z \in \C: 1-\frac{b-1}{|\xi|} < \Re(z) < 1+ \frac{1-a}{|\xi|} \}$ ($\xi <0$).
\end{lemma}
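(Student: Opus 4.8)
The plan is to bypass any explicit computation of the density $f_Y$ and work entirely through the moment interpretation of the Mellin transform, equation (\ref{eqn:Mellmoments}). Since $X>0$ and $\nu>0$, the map $x \mapsto \nu x^\xi$ is a monotone bijection of $\R^+$ onto itself, so $Y$ is a well-defined positive random variable admitting a density $f_Y$, and its Mellin transform exists as a moment. By (\ref{eqn:Mellmoments}), for $z$ in the strip to be determined,
\[
\Ms(f_Y;z) = \E\left(Y^{z-1}\right) = \E\left((\nu X^\xi)^{z-1}\right) = \nu^{z-1}\,\E\left(X^{\xi(z-1)}\right),
\]
the factor $\nu^{z-1}$ coming out because $\nu$ is a deterministic positive constant. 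Recognising $\E(X^{\xi(z-1)}) = \E\left(X^{(1+\xi(z-1))-1}\right) = \Ms(f_X;1+\xi(z-1))$, again by (\ref{eqn:Mellmoments}), delivers the claimed identity $\Ms(f_Y;z) = \nu^{z-1}\Ms(f_X;1+\xi(z-1))$. (One could equally chain the operational rules (\ref{eqn:A2})--(\ref{eqn:A4}) applied to the change of variables, but the moment route is shorter and makes the strip bookkeeping transparent.)

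It remains to pin down the strip of holomorphy, which is really the only place requiring care. By the moment characterisation (\ref{eqn:Sfmom}), a point $z$ lies in $\Ss_{f_Y}$ exactly when $\E\left(Y^{\Re(z)-1}\right)<\infty$, which by the computation above is equivalent to $\E\left(X^{\xi(\Re(z)-1)}\right)<\infty$, i.e.\ to $1+\xi(\Re(z)-1)\in\Ss_{f_X}=\{a<\Re(\cdot)<b\}$. Hence the defining condition is $a < 1+\xi(\Re(z)-1) < b$, that is $a-1 < \xi(\Re(z)-1) < b-1$.

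The main (and essentially only) obstacle is handling the sign of $\xi$ when dividing through this double inequality. For $\xi>0$ the inequalities are preserved, and using $a-1=-(1-a)$ one gets $1-\frac{1-a}{\xi} < \Re(z) < 1+\frac{b-1}{\xi}$; for $\xi<0$ the inequalities reverse, and writing $|\xi|=-\xi$ one obtains $1-\frac{b-1}{|\xi|} < \Re(z) < 1+\frac{1-a}{|\xi|}$. These are precisely the two strips stated in the lemma. As a sanity check, at $\Re(z)=1$ one has $1+\xi(\Re(z)-1)=1\in(a,b)$, so the hypothesis $a<1<b$ guarantees that in both cases the resulting strip is nonempty and contains the line $\Re(z)=1$, consistent with $f_Y$ being a genuine probability density.
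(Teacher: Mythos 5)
Your proof is correct, but it takes a genuinely different route from the paper's. The paper first writes down the density of $Y=\nu X^\xi$ explicitly via the change-of-variables formula,
\[
f_Y(y) = \frac{1}{\nu |\xi|} \left(\frac{y}{\nu}\right)^{\frac{1}{\xi}-1} f_X\left(\left(\frac{y}{\nu}\right)^{\frac{1}{\xi}}\right),
\]
and then chains the operational properties (\ref{eqn:A1}), (\ref{eqn:A2}), (\ref{eqn:A3}) and (\ref{eqn:A4}), with the strip of holomorphy emerging from the strip bookkeeping attached to those rules. You instead never compute $f_Y$ at all: you exploit the probabilistic reading of the Mellin transform, $\Ms(f_Y;z)=\E(Y^{z-1})=\nu^{z-1}\E\left(X^{\xi(z-1)}\right)$, and obtain the strip directly from the moment characterisation (\ref{eqn:Sfmom}), reducing it to the condition $a<1+\xi(\Re(z)-1)<b$ and handling the sign of $\xi$ when dividing through. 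Both arguments are rigorous and rest on the same underlying substitution; what yours buys is brevity and a very transparent determination of $\Ss_{f_Y}$ (finiteness of moments), including the observation that $\Re(z)=1$ always survives. What the paper's version buys is the explicit density formula (\ref{eqn:powerdens}) itself, which is not a throwaway: it is reused elsewhere (for instance to derive explicit expressions such as (\ref{eqn:G11})), and it keeps the proof inside the operational-calculus toolbox assembled in Appendix \ref{app:A1}. Your only implicit assumption -- that $\xi\neq 0$ so the map $x\mapsto \nu x^\xi$ is a bijection of $\R^+$ and $Y$ admits a density -- is also implicit in the statement of the lemma (the strips involve $1/\xi$), so nothing is missing.
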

\begin{proof} Standard developments show that 
\begin{equation} f_Y(y) = \frac{1}{\nu |\xi|} \left(\frac{y}{\nu}\right)^{\frac{1}{\xi}-1} f_X\left(\left(\frac{y}{\nu}\right)^\frac{1}{\xi}\right), \qquad y > 0. \label{eqn:powerdens} \end{equation}
Then (\ref{eqn:A1}), (\ref{eqn:A2}), (\ref{eqn:A3}) and (\ref{eqn:A4}) directly yield the result. \end{proof}

\begin{corollary} \label{cor:inv} Let $X$ be a continuous positive random variable with density $f_X$ whose Mellin transform is $\Ms(f_X;z)$ on the strip of holomorphy $\Ss_{f_X} = \{ z\in \C: a<\Re(z) < b \}$, for some $a<1<b$. Then the inverse random variable $Y = 1/X$ has density $f_Y$ whose Mellin transform is $\Ms(f_Y;z) = \Ms(f_X;2-z)$ on $\Ss_{f_Y} = \{z \in \C: 2-b < \Re(z) < 2-a \}$.
\end{corollary}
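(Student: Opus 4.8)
The plan is to obtain this as an immediate specialisation of Lemma \ref{prop:scalpow}, which has just been established in full generality for the transformation $Y = \nu X^\xi$. The inverse map $Y = 1/X$ is exactly the case $\nu = 1$ and $\xi = -1$, so the whole statement should drop out by substitution, with the only care needed being the bookkeeping of the strip of holomorphy in the $\xi<0$ branch.

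Concretely, I would first record that $Y = 1/X = \nu X^\xi$ with $\nu = 1$ and $\xi = -1$, and invoke Lemma \ref{prop:scalpow}. Its formula $\Ms(f_Y;z) = \nu^{z-1} \Ms\left(f_X;1+ \xi(z-1)\right)$ then reads $\Ms(f_Y;z) = 1^{z-1}\,\Ms(f_X;1-(z-1)) = \Ms(f_X;2-z)$, which is the claimed identity. For the strip, I would use the $\xi<0$ clause of the lemma, namely $\Ss_{f_Y} = \{z \in \C: 1-\tfrac{b-1}{|\xi|} < \Re(z) < 1+ \tfrac{1-a}{|\xi|} \}$; setting $|\xi|=1$ turns the endpoints into $1-(b-1)=2-b$ and $1+(1-a)=2-a$, giving precisely $\Ss_{f_Y} = \{z \in \C: 2-b < \Re(z) < 2-a \}$. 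This is consistent with the direct computation already seen for the Inverse Gamma density in the earlier example, where $\Ms(f_\text{IG};z) = \Ms(f_\text{G};2-z)$ was obtained from $f_\text{IG}(x) = x^{-2} f_\text{G}(1/x)$ via (\ref{eqn:fIGfG}).

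As a self-contained alternative (and a useful sanity check), I would verify the identity from scratch: the change-of-variables formula gives $f_Y(y) = y^{-2} f_X(1/y)$ for $y>0$, so by the definition (\ref{eqn:Mellindef}),
\[
\Ms(f_Y;z) = \int_0^\infty y^{z-1}\,y^{-2}\,f_X(1/y)\,dy,
\]
and the substitution $x = 1/y$ converts this into $\int_0^\infty x^{(2-z)-1} f_X(x)\,dx = \Ms(f_X;2-z)$. The admissible values of $z$ are then exactly those for which $2-z$ lies in $\Ss_{f_X}$, i.e.\ $a < \Re(2-z) < b$, which rearranges to $2-b < \Re(z) < 2-a$, recovering the stated strip.

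There is no real obstacle here, since the result is a one-line consequence of the preceding lemma; the only point requiring genuine attention is the orientation reversal of the strip endpoints under $z \mapsto 2-z$ (the left boundary $a$ of $\Ss_{f_X}$ controls the \emph{right} boundary $2-a$ of $\Ss_{f_Y}$, and vice versa). I would make sure to state this flip explicitly so the reader sees why the bound $a<1<b$ on the original strip yields the matching bound $2-b<1<2-a$ guaranteeing that the line $\Re(z)=1$ again lies inside $\Ss_{f_Y}$, as it must for a bona fide probability density.
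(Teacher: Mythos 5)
Your proof is correct and takes essentially the same route as the paper, whose entire proof is ``Take $\nu=1$ and $\xi = -1$ in Lemma \ref{prop:scalpow}''; your bookkeeping of the strip endpoints under $z \mapsto 2-z$ matches the $\xi<0$ clause of that lemma exactly. The additional change-of-variables verification via $f_Y(y)=y^{-2}f_X(1/y)$ is a harmless sanity check but not needed.
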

\begin{proof} Take $\nu=1$ and $\xi = -1$ in Lemma \ref{prop:scalpow}. \end{proof}

\subsection{Meijer parameterisation} \label{app:meijerdistr}

\begin{table}[H] \centering
 \begin{tabular}{|l l c|c c c c| }
\hline
Common name & Density & Parameters & $\nu$ & $\gamma$ & $\xi$ & $\theta$ \\
\hline
Beta prime & $f(x) = \frac{x^{\alpha-1}(1+x)^{-\alpha-\beta}}{\Bs(\alpha,\beta)}$  & $\alpha,\beta>0$ & $\beta/\alpha$ & $\sqrt{\frac{1}{\alpha}+\frac{1}{\beta}}$ &  $1$ & $\tan^{-1} \sqrt{\frac{\alpha}{\beta}}$ \\
Burr & $f(x) = ck\,\frac{x^{c-1}}{(1+x^c)^{k+1}}$  & $c,k>0$ & $1$ & $ \frac{1}{c} \sqrt{1+\frac{1}{k}}$ & $\frac{1}{c}$ &  $\tan^{-1} \sqrt{\frac{1}{k}}$ \\ 
Chi & $f(x) = \frac{2^{1-k/2}x^{k-1}e^{-x^2/2}}{\Gamma\left(\frac{k}{2}\right)}$  & $k>0$ & $\sqrt{k}$ & $\frac{1}{\sqrt{2k}} $  & $\frac{1}{2}$ & $0$ \\ 
Chi-squared & $f(x) = \frac{x^{\frac{k}{2}-1}e^{-x/2}}{2^{k/2}\Gamma\left(\frac{k}{2}\right)}$  & $k>0$ & $k$ & $\sqrt{\frac{2}{k}} $ & $1$ & $0$ \\
Dagum & $f(x) = \frac{apx^{ap-1}}{b^{ap}\left(1+\left(\frac{x}{b} \right)^a \right)^{p+1}}$ & $a,b,p>0$ & $b$ & $\frac{1}{a} \sqrt{1+\frac{1}{p}}$ & $\frac{1}{a}$ & $\tan^{-1}\sqrt{\frac{1}{p}}$ \\  
Erlang & $f(x) = \frac{x^{k-1}e^{-x/\mu}}{\mu^k(k-1)!}$ & $\mu>0, k \in \N$ & $\mu k$ & $\sqrt{\frac{1}{k}}$ & 1 & $0$ \\  
Fisher-Snedecor & $f(x) = \frac{\left(d_1/d_2\right)^{d_1/2}}{\Bs\left(\frac{d_1}{2},\frac{d_2}{2}\right)} \frac{x^{\frac{d_1}{2}-1}}{\left(1+\frac{d_1}{d_2}\, x \right)^{\frac{d_1+d_2}{2}}}$ & $d_1,d_2 >0$  & 1 & $\sqrt{\frac{2}{d_1}+\frac{2}{d_2}}$ & 1 & $\tan^{-1}\sqrt{\frac{d_1}{d_2}}$ \\
Fr\'echet & $f(x) = \frac{\alpha}{s} \left(\frac{x}{s} \right)^{-1-\alpha} e^{-\left(\frac{x}{s} \right)^{-\alpha}}$ & $\alpha,s>0$ & $s$ & $\frac{1}{\alpha}$ & $\frac{1}{\alpha}$ & $\frac{\pi}{2}$ \\  
Gamma & $f(x) = \frac{\beta^\alpha}{\Gamma(\alpha)}x^{\alpha-1} e^{-\beta x}$ & $\alpha,\beta > 0$ & $\frac{\alpha}{\beta}$ & $\sqrt{\frac{1}{\alpha}}$ & 1 & 0 \\
Generalised Pareto & $f(x) = \frac{1}{\sigma} \left(1+ \frac{\zeta x}{\sigma} \right)^{-\frac{1}{\zeta}-1}$ & $\sigma,\zeta >0$ & $\frac{\sigma}{\zeta}$ & $\sqrt{\zeta+1}$ & $1$ & $\tan^{-1}\sqrt{\zeta}$ \\
Inverse Gamma & $f(x) = \frac{\beta^\alpha}{\Gamma(\alpha)}x^{-\alpha-1} e^{-\beta/x}$ & $\alpha,\beta >0$ & $\frac{\beta}{\alpha}$  & $\sqrt{\frac{1}{\alpha}}$ &1 & $\frac{\pi}{2}$ \\ 
L\'evy & $f(x) = \sqrt{\frac{c}{2\pi}} \frac{e^{-c/(2x)}}{x^{3/2}}$ & $c > 0$ & $c$ & $\sqrt{2}$ & 1 & $\frac{\pi}{2}$ \\
Log-logistic & $f(x) = \frac{\beta}{\alpha}\,\frac{\left(\frac{x}{\alpha} \right)^{\beta-1}}{\left(1+\left(\frac{x}{\alpha} \right)^\beta \right)^2}$ & $\alpha,\beta > 0$ & $\alpha$ & $\frac{\sqrt{2}}{\beta}$ & $\frac{1}{\beta}$ & $\frac{\pi}{4}$ \\
Maxwell & $f(x) = \frac{\sqrt{2}}{\sigma^3 \sqrt{\pi}}\,x^2 e^{-x^2/(2\sigma^2)}$ & $\sigma > 0$ & $\sqrt{3}\,\sigma$ & $\frac{1}{\sqrt{6}}$ & $ \frac{1}{2}$ & 0 \\
Nakagami & $f(x) = \frac{2m^m}{\Gamma(m) \Omega^m}\,x^{2m-1}e^{-\frac{mx^2}{\Omega}}$ & $m, \Omega > 0$ & $\sqrt{\Omega}$ & $\frac{1}{2\sqrt{m}}$ & $\frac{1}{2}$ & 0 \\
Rayleigh & $f(x) = \frac{x}{\sigma^2} e^{-\frac{x^2}{2\sigma^2}}$ & $\sigma >0$ & $\sqrt{2}\,\sigma$ & $\frac{1}{2}$ & $\frac{1}{2}$ & 0 \\
Singh-Maddala & $f(x) = \frac{aq}{b}\,\frac{x^{a-1}}{\left(1+\left(\frac{x}{b} \right)^a \right)^{q+1}}$ & $a,b,q >0$ & $\frac{b}{q^{1/a}}$ & $\frac{1}{a}\sqrt{1+\frac{1}{q}}$ & $\frac{1}{a}$ & $\tan^{-1}\sqrt{\frac{1}{q}}$ \\
Stacy & $f(x) = \frac{1}{\Gamma\left(\frac{d}{p}\right)} \frac{p}{a^d} x^{d-1} e^{-(x/a)^p}$ & $a,d,p>0$ & $a\left(\frac{d}{p}\right)^{1/p}$ & $\frac{1}{p}\sqrt{\frac{d}{p}}$ & $\frac{1}{p}$ & 0 \\
Weibull & $f(x) = \frac{\eta}{\mu}\left(\frac{x}{\mu} \right)^{\eta-1} e^{-\left(\frac{x}{\mu} \right)^\eta}$ & $\mu,\eta>0$ & $\mu$ & $\frac{1}{\eta}$ & $\frac{1}{\eta}$ & 0 \\
\hline

 \end{tabular}
\caption{Meijer parameterisation ($\nu$, $\gamma$, $\xi$, $\theta$ in (\ref{eqn:G11})) for the most common $\R^+$-supported densities.} \label{tab:meijerdistr}
\end{table}

\section{Proofs}

\subsection*{Preliminary lemma}

First we state a technical lemma that will be used repeatedly in the proofs below. \cite{Tricomi51} gave the following asymptotic expansion for the ratio of two Gamma functions. Let $t, \alpha, \beta \in \C$. Then, as $|t| \to \infty$,
\begin{equation} \frac{\Gamma(t + \alpha)}{\Gamma(t+\beta)} = t^{\alpha-\beta}\left(\sum_{k=0}^{M-1} \frac{1}{k!} \frac{\Gamma(1+\alpha-\beta)}{\Gamma(1+\alpha -\beta -k)} B_{k}^{(1+\alpha-\beta)}(\alpha) \frac{1}{t^{k}} + R_{M}(t)\right), \label{eqn:tricomerdel} \end{equation}
where $|R_M(t)| = O(|t|^{-M})$, provided that $|\alpha|, |\beta|$ are bounded and $|\arg(t+\alpha)|<\pi$. Here $B_k^{(a)}(x)$ are the generalised Bernoulli polynomials, which are polynomials in $a$ and $x$ of degree $k$, see \citet[Section 1.1]{Temme96}. The first such polynomials are $B_0^{(a)}(x) = 1$, $B_1^{(a)}(x) = x -a/2$ and $B_2^{(a)}(x) = (3a^2 +12x^2 - a(1+12x))/12$. The following result, proved in \cite{Fields70}, essentially gives a uniform version of (\ref{eqn:tricomerdel}) which allows $|\alpha|$ and $|\beta|$ to become `large' as well, if more slowly than $|t|$.  
\begin{lemma} \label{lemma:fields} Let $t, \alpha, \beta \in \C$. Then, for all $M = 1, 2,\ldots$, one has, as $|t| \to \infty$, 
\begin{equation} \frac{\Gamma(t + \alpha)}{\Gamma(t+\beta)} = t^{\alpha-\beta}\left(\sum_{k=0}^{M-1} \frac{1}{k!} \frac{\Gamma(1+\alpha-\beta)}{\Gamma(1+\alpha -\beta -k)} B_{k}^{(1+\alpha-\beta)}(\alpha) \frac{1}{t^{k}} + R_{M}(t,\alpha,\beta)\right), \label{eqn:fields} \end{equation}
where $|R_M(t,\alpha,\beta)| =O\left(|t|^{-M} (1+|\alpha-\beta|)^M(1+|\alpha|+|\alpha-\beta|)^M))\right)$, provided that $|\arg (t+\alpha)| < \pi$ 
and $(1+|\alpha-\beta|)(1+|\alpha|+|\alpha-\beta|)=o(|t|)$.
\end{lemma}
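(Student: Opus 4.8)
The plan is to derive (\ref{eqn:fields}) from a single integral representation combined with Watson's lemma, reading the coefficients off the generating function of the generalised Bernoulli polynomials and then tracking the dependence of the remainder on $\alpha$ and $\beta$ by hand. First I would assume temporarily that $\Re(\beta-\alpha)>0$ (together with $\Re(t+\alpha)>0$, which holds eventually as $|t|\to\infty$) and start from the Beta-function identity
\begin{equation*} \frac{\Gamma(t+\alpha)}{\Gamma(t+\beta)} = \frac{1}{\Gamma(\beta-\alpha)}\int_0^1 u^{t+\alpha-1}(1-u)^{\beta-\alpha-1}\,du. \end{equation*}
Writing $a=\beta-\alpha$ and substituting $u=e^{-s/t}$ recasts this as
\begin{equation*} \frac{\Gamma(t+\alpha)}{\Gamma(t+\beta)} = \frac{t^{\alpha-\beta}}{\Gamma(a)}\int_0^\infty e^{-s}\,s^{a-1}\,\Phi(s/t)\,ds, \qquad \Phi(w)=e^{-\alpha w}\left(\frac{1-e^{-w}}{w}\right)^{a-1}, \end{equation*}
in which the prefactor $t^{\alpha-\beta}$ already emerges and the hypothesis $|\arg(t+\alpha)|<\pi$ is precisely what permits rotating the ray of integration so that the integral still converges for complex $t$.

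Next I would identify the Taylor coefficients of $\Phi$, which is analytic in $|w|<2\pi$. Setting $\tau=-w$ in the defining generating function $\left(\tau/(e^\tau-1)\right)^{1-a}e^{\alpha\tau}=\sum_{k\ge0}B_k^{(1-a)}(\alpha)\tau^k/k!$ \citep[Section 1.1]{Temme96} and using $1-a=1+\alpha-\beta$ gives $\Phi(w)=\sum_{k\ge0}(-1)^kB_k^{(1+\alpha-\beta)}(\alpha)\,w^k/k!$. Inserting the degree-$(M-1)$ Taylor polynomial of $\Phi$ and integrating term by term against $e^{-s}s^{a-1}$ via $\int_0^\infty e^{-s}s^{a+k-1}\,ds=\Gamma(a+k)$, together with the elementary identity $\frac{(-1)^k}{k!}\frac{\Gamma(a+k)}{\Gamma(a)}=\frac{1}{k!}\frac{\Gamma(1+\alpha-\beta)}{\Gamma(1+\alpha-\beta-k)}$, reproduces exactly the coefficients displayed in (\ref{eqn:fields}) (and hence those of Tricomi--Erd\'elyi (\ref{eqn:tricomerdel})).

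The crux, and the only place where uniformity in $\alpha,\beta$ is genuinely at stake, is the remainder. Writing $\Phi(w)=\sum_{k=0}^{M-1}c_kw^k+w^M\Psi_M(w)$ with the integral form of the Taylor remainder, the error equals $\frac{t^{\alpha-\beta}}{\Gamma(a)}\int_0^\infty e^{-s}s^{a-1}(s/t)^M\Psi_M(s/t)\,ds$. A naive Cauchy estimate of $\Psi_M$ on a fixed circle would only produce a bound exponential in $|\alpha|$ and $|a|$, which is far too lossy; the point is instead to exploit that the neglected coefficients are genuine polynomials of degree $M$. Concretely I would use $|c_M|=|B_M^{(1+\alpha-\beta)}(\alpha)|/M!\le C_M(1+|\alpha|+|\alpha-\beta|)^M$ (a degree-$M$ polynomial jointly in $\alpha$ and in the order) and the Pochhammer bound $|\Gamma(a+M)/\Gamma(a)|\le C_M(1+|\alpha-\beta|)^M$, so that the first omitted term is of order $|t|^{-M}(1+|\alpha-\beta|)^M(1+|\alpha|+|\alpha-\beta|)^M$; the smallness hypothesis $(1+|\alpha-\beta|)(1+|\alpha|+|\alpha-\beta|)=o(|t|)$ both keeps $s/t$ inside the disc of analyticity over the effective integration range and forces the successive terms to decrease geometrically, so that this first omitted term controls the whole tail. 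Finally the temporary restriction $\Re(\beta-\alpha)>0$ is lifted using the recurrence $\Gamma(z+1)=z\Gamma(z)$ and analytic continuation, the expansion coefficients being polynomials. The main obstacle is thus this uniform remainder estimate: converting the heuristic ``remainder $\sim$ first omitted term'' into a rigorous inequality with polynomial rather than exponential growth in $\alpha$ and $\beta$, for which the two decisive facts are the analyticity radius $2\pi$ of $\Phi$ and the explicit degree of $B_M^{(1+\alpha-\beta)}(\alpha)$.
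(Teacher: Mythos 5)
The paper offers no proof of Lemma \ref{lemma:fields} to compare against: it imports the result wholesale from \cite{Fields70}. Your sketch therefore has to stand on its own as a complete argument, and it does not. What you do correctly is reconstruct the classical derivation of the \emph{non-uniform} expansion (\ref{eqn:tricomerdel}) of \cite{Tricomi51}: the Beta-integral representation, the substitution $u=e^{-s/t}$ turning the ratio into the Laplace transform $\frac{t^{\alpha-\beta}}{\Gamma(a)}\int_0^\infty e^{-s}s^{a-1}\Phi(s/t)\,ds$ with $a=\beta-\alpha$, the identification $\Phi(w)=\sum_{k\ge0}(-1)^kB_k^{(1+\alpha-\beta)}(\alpha)\,w^k/k!$ from the generating function, and the identity $\frac{(-1)^k}{k!}\frac{\Gamma(a+k)}{\Gamma(a)}=\frac{1}{k!}\frac{\Gamma(1+\alpha-\beta)}{\Gamma(1+\alpha-\beta-k)}$ are all right, and you correctly locate where the hypothesis $(1+|\alpha-\beta|)(1+|\alpha|+|\alpha-\beta|)=o(|t|)$ must enter. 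But everything that distinguishes Lemma \ref{lemma:fields} from (\ref{eqn:tricomerdel}) is the \emph{uniform}, polynomial-in-$(\alpha,\beta)$ remainder bound, and there your proposal stops exactly where the proof would have to begin: you name the two relevant ingredients (radius of analyticity $2\pi$ of $\Phi$, degree of $B_M^{(1+\alpha-\beta)}(\alpha)$) and then defer converting ``remainder $\sim$ first omitted term'' into an inequality as ``the main obstacle''. A proof whose central estimate is declared an obstacle is not a proof; for positive real parameters it can in fact be completed (show $|c_j|\le C^j(1+|\alpha|+|\alpha-\beta|)^j$ for an absolute constant $C$ by writing $\Phi=e^{\psi}$ with $\psi(0)=0$, split the integral at $s_0\asymp|t|/(1+|\alpha|+|\alpha-\beta|)$, sum the geometric tail on $[0,s_0]$, and bound $[s_0,\infty)$ by an incomplete-Gamma estimate), but none of this is in your text.

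More seriously, the specific route you propose breaks down in exactly the generality the lemma claims and the paper needs. The statement is for $t,\alpha,\beta\in\C$, and it is applied (in the proofs of Propositions \ref{prop:asympMell} and \ref{prop:asympMellL2}, hence of Theorems \ref{thm:consistMISE} and \ref{thm:bias-var}) with $\alpha=\xi(z-1)$, $\beta=0$ and $z$ running over vertical lines with $|z-1|$ as large as $o(\gamma^{-2})$, so $|\Im(\alpha)|=|\Im(\alpha-\beta)|$ is unbounded. Your error analysis consists of taking absolute values inside the real integral $\frac{1}{\Gamma(a)}\int_0^\infty e^{-s}s^{a-1}(s/t)^M\Psi_M(s/t)\,ds$; for complex $a$ this yields a bound of order $\frac{\Gamma(\Re(a)+M)}{|\Gamma(a)|}\,(1+|\alpha|+|a|)^M|t|^{-M}$, and since $\Gamma(\Re(a))/|\Gamma(a)|$ grows like $e^{\pi|\Im(a)|/2}$ (up to powers) as $|\Im(a)|\to\infty$, this is exponentially worse than the claimed $(1+|\alpha-\beta|)^M(1+|\alpha|+|\alpha-\beta|)^M|t|^{-M}$ precisely in the regime that matters. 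Repairing this requires genuinely complex analysis of the Laplace integral (a contour through the saddle of $e^{-s}s^{a-1}$, with control of $\Phi$ near its singularities $\pm2\pi i$ along that contour) or a different representation altogether; your single remark that $|\arg(t+\alpha)|<\pi$ ``permits rotating the ray'' does not do this work. Finally, the closing step --- removing the restriction $\Re(\beta-\alpha)>0$ ``using the recurrence and analytic continuation'' --- is not available as stated: identities continue analytically, bounds do not, and the recurrence route shifts $\beta$ by an integer $N\asymp\Re(\alpha-\beta)$, which is unbounded under your hypotheses, so the degree-$N$ factor $\prod_{j=0}^{N-1}(t+\beta+j)$ would itself have to be re-expanded with uniform remainder control --- yet another missing argument.
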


\subsection*{Proof of Proposition \ref{prop:Meijerkern}}

For any two parameters $\alpha_1>0$ and $\alpha_2>0$, the Fisher-Snedecor distribution $F(2\alpha_1,2\alpha_2)$ has density 
\begin{equation} f_\text{F}(x) = \frac{1}{\Bs(\alpha_1,\alpha_2)}\left(\frac{\alpha_1}{\alpha_2} \right)^{\alpha_1} \frac{x^{\alpha_1-1}}{\left(1+\frac{\alpha_1}{\alpha_2}x\right)^{\alpha_1+\alpha_2}}, \qquad x>0,  \label{eqn:Fdens} \end{equation}
where $\Bs(\cdot,\cdot)$ is the Beta function. One of its characterisations is that, if $X_1 \sim \text{Gamma}(\alpha_1,\beta_1)$ and $X_2 \sim \text{Gamma}(\alpha_2,\beta_2)$ for some arbitrary positive $\beta_1$ and $\beta_2$ ($X_1$ and $X_2$ independent), then
\[\frac{\alpha_2 \beta_1}{\alpha_1 \beta_2} \frac{X_1}{X_2}  \sim F(2\alpha_1,2\alpha_2),\]
see \citet[Section 27.8]{Johnson94}. Hence the $F$-distribution is the distribution of the product of a Gamma$(\alpha_1,\beta_1)$ r.v.\ and an Inverse Gamma$(\alpha_2,\beta_2)$ r.v., rescaled by the constant $\alpha_2 \beta_1/\alpha_1 \beta_2$. Lemma \ref{prop:scalpow}, (\ref{eqn:MellProd}), (\ref{eqn:gamMT}) and (\ref{eqn:invgamMT}) then yield the Mellin transform of (\ref{eqn:Fdens}):
\begin{align}
 \Ms(f_\text{F};z) & = \left(\frac{\alpha_2 \beta_1}{\alpha_1 \beta_2}\right)^{z-1} \times \frac{1}{\beta_1^{z-1}}\frac{\Gamma(\alpha_1 + z-1)}{\Gamma(\alpha_1)} \times \frac{1}{\beta_2^{1-z}}\frac{\Gamma(\alpha_2 + 1-z)}{\Gamma(\alpha_2)} \notag \\
& = \left(\frac{\alpha_2 }{\alpha_1}\right)^{z-1} \frac{\Gamma(\alpha_1 + z-1)\Gamma(\alpha_2 + 1-z)}{\Gamma(\alpha_1)\Gamma(\alpha_2)}, \qquad 1- \alpha_1 < \Re(z) < 1+ \alpha_2. \label{eqn:FMT}
\end{align}

\ppn Identifying (\ref{eqn:MTkern}) and (\ref{eqn:FMT}), $L_{1,\gamma,1,\theta}$ is seen to be the $F\left(\frac{2}{\gamma^2 \cos^2 \theta},\frac{2}{\gamma^2 \sin^2 \theta}\right)$-density. From Lemma \ref{prop:scalpow}, it follows that $L_{\nu,\gamma,\xi,\theta}$ is the density of the positive random variable $Y = \nu X^\xi$, where $X \sim F\left(\frac{2\xi^2}{\gamma^2 \cos^2 \theta},\frac{2\xi^2}{\gamma^2 \sin^2 \theta}\right)$.

\subsection*{Proof of Proposition \ref{prop:momkern}}

If $\xi > 2\gamma^2 \sin^2 \theta$, then $\{z \in \C: 2\leq \Re(z) \leq 3\} \subset \Ss_{L_{\nu,\gamma,\xi,\theta}}$. Then, by (\ref{eqn:Mellmoments}), the mean $\mu$ of $L_{\nu,\gamma,\xi,\theta}$ is  $\mu = \int_0^\infty x L_{\nu,\gamma,\xi,\theta}(x)\,dx = \Ms(L_{\nu,\gamma,\xi,\theta};2)$, that is, 
\begin{equation} \mu = \nu  \left(\frac{1}{\tan^2 \theta} \right)^{\xi}\  \frac{\Gamma\left(\frac{\xi^2}{\gamma^2 \cos^2 \theta}+\xi \right) \Gamma\left(\frac{\xi^2}{\gamma^2 \sin^2 \theta}-\xi \right)}{\Gamma\left(\frac{\xi^2}{\gamma^2 \cos^2 \theta}\right)\Gamma\left(\frac{\xi^2}{\gamma^2 \sin^2 \theta}\right)}. \label{eqn:muetak}\end{equation}
Also, as $\int_0^\infty x^2 L_{\nu,\gamma,\xi,\theta}(x)\,dx = \Ms(L_{\nu,\gamma,\xi,\theta};3)$, the standard deviation of $L_{\nu,\gamma,\xi,\theta}$ is $$\sigma = \sqrt{\Ms(L_{\nu,\gamma,\xi,\theta};3)-\Ms^2(L_{\nu,\gamma,\xi,\theta};2)},$$
which is 
\begin{multline*} \sigma  = \nu  \left(\frac{1}{\tan^2 \theta} \right)^{\xi}\  \frac{\Gamma\left(\frac{\xi^2}{\gamma^2 \cos^2 \theta}+\xi \right) \Gamma\left(\frac{\xi^2}{\gamma^2 \sin^2 \theta}-\xi \right)}{\Gamma\left(\frac{\xi^2}{\gamma^2 \cos^2 \theta}\right)\Gamma\left(\frac{\xi^2}{\gamma^2 \sin^2 \theta}\right)} \\ \times \sqrt{\frac{\Gamma\left(\frac{\xi^2}{\gamma^2 \cos^2 \theta}\right)\Gamma\left(\frac{\xi^2}{\gamma^2 \sin^2 \theta}\right)\Gamma\left(\frac{\xi^2}{\gamma^2 \cos^2 \theta}+2\xi\right)\Gamma\left(\frac{\xi^2}{\gamma^2 \sin^2 \theta}-2\xi\right)}{\Gamma^2\left(\frac{\xi^2}{\gamma^2 \cos^2 \theta}+\xi\right)\Gamma^2\left(\frac{\xi^2}{\gamma^2 \sin^2 \theta}-\xi\right)}-1}.\end{multline*}
The announced result follows from $\chi = \frac{\sigma}{\mu}$.

\subsection*{Proof of Proposition \ref{lem:asympCV}}

The proof is given for the case $\theta \notin \{0,\pi/2\}$ only. By (\ref{eqn:tricomerdel}), we have, as $\gamma \to 0$,
\begin{multline*} \frac{\Gamma\left(\frac{\xi^2}{\gamma^2 \cos^2 \theta}\right)}{\Gamma\left(\frac{\xi^2}{\gamma^2 \cos^2 \theta}+\xi\right)}\times \frac{\Gamma\left(\frac{\xi^2}{\gamma^2 \cos^2 \theta}+2\xi\right)}{\Gamma\left(\frac{\xi^2}{\gamma^2 \cos^2 \theta}+\xi\right)} = \left(\frac{\xi^2}{\gamma^2 \cos^2 \theta}\right)^{-\xi}\left( 1+\frac{ 1-\xi}{2\xi}\,\gamma^2 \cos^2 \theta+ O(\gamma^4)\right) \\ \times \left(\frac{\xi^2}{\gamma^2 \cos^2 \theta}\right)^{\xi}\left( 1+\frac{3\xi-1}{2\xi}\,\gamma^2 \cos^2 \theta+ O(\gamma^4)\right) 
\end{multline*}
\[ = 1 + \gamma^2 \cos^2 \theta + O(\gamma^4).\]
Similarly, 
\[ \frac{\Gamma\left(\frac{\xi^2}{\gamma^2 \sin^2 \theta}\right)}{\Gamma\left(\frac{\xi^2}{\gamma^2 \sin^2 \theta}-\xi\right)}\times \frac{\Gamma\left(\frac{\xi^2}{\gamma^2 \sin^2 \theta}-2\xi\right)}{\Gamma\left(\frac{\xi^2}{\gamma^2 \sin^2 \theta}-\xi\right)} = 1 + \gamma^2 \sin^2 \theta + O(\gamma^4).\]
Hence 
\[\frac{\Gamma\left(\frac{\xi^2}{\gamma^2 \cos^2 \theta}\right)\Gamma\left(\frac{\xi^2}{\gamma^2 \cos^2 \theta}+2\xi\right)}{\Gamma^2\left(\frac{\xi^2}{\gamma^2 \cos^2 \theta}+\xi\right)}\, \frac{\Gamma\left(\frac{\xi^2}{\gamma^2 \sin^2 \theta}\right)\Gamma\left(\frac{\xi^2}{\gamma^2 \sin^2 \theta}-2\xi\right)}{\Gamma^2\left(\frac{\xi^2}{\gamma^2 \sin^2 \theta}-\xi\right)} = 1+ \gamma^2 +O(\gamma^4), \]
and the result follows from (\ref{eqn:fullgamma}). 


\subsection*{Proof of Proposition \ref{prop:asympMell}}

The proof is given for the case $\theta \notin \{0,\pi/2\}$ only. As $\gamma \to 0$, Lemma \ref{lemma:fields} ascertains that
\begin{align*} \left(\frac{\xi^2}{\gamma^2 \cos^2 \theta}\right)^{-\xi(z-1)} \frac{\Gamma\left(\frac{\xi^2}{\gamma^2 \cos^2 \theta}+\xi(z-1)\right)}{\Gamma\left(\frac{\xi^2}{\gamma^2 \cos^2 \theta}\right)} & = 1+ \frac{\gamma^2 \cos^2 \theta}{2\xi^2}\xi(z-1)(\xi(z-1)-1) +\rho_{1}(\gamma,z) \\ 
& = 1+ \frac{\gamma^2 \cos^2 \theta}{2} (z-1)(z-1-\frac{1}{\xi})+\rho_{1}(\gamma,z),\end{align*}
where $|\rho_{1}(\gamma,z)| =O(\gamma^4 (1+|z-1|)^2)$, provided $|z-1|=o(\gamma^{-2})$. Similarly, 
\begin{align*} \left(\frac{\xi^2}{\gamma^2 \sin^2 \theta}\right)^{-\xi(1-z)}\frac{\Gamma\left(\frac{\xi^2}{\gamma^2 \sin^2 \theta}+\xi(1-z)\right)}{\Gamma\left(\frac{\xi^2}{\gamma^2 \sin^2 \theta}\right)} & = 1+ \frac{\gamma^2 \sin^2 \theta}{2\xi^2} \xi(1-z)(\xi(1-z)-1) +\rho_{2}(\gamma,z) \\ 
& = 1+ \frac{\gamma^2 \sin^2 \theta}{2}(z-1)(z-1+\frac{1}{\xi}) +\rho_{2}(\gamma,z),\end{align*}
where $|\rho_{2}(\gamma,z)| =O(\gamma^4 (1+|z-1|)^2)$, provided $|z-1|=o(\gamma^{-2})$. Also, the binomial series expands as 
\[\nu^{z-1} = (1+\Delta \gamma^2)^{z-1} = 1+\Delta \gamma^2 (z-1)+\rho_{3}(\gamma,z), \]
where $|\rho_{3}(\gamma,z)| =O(\gamma^4 |(z-1)(z-2)|)$, provided $|z-1| =o(\gamma^{-2})$ as $\gamma \to 0$.
Multiplying these factors yields
\begin{align*} \nu^{z-1}\ \left(\frac{1}{\tan^2 \theta}\right)^{\xi(z-1)}\ &\  \frac{\Gamma\left(\frac{\xi^2}{\gamma^2 \cos^2 \theta}+\xi(z-1)\right)}{\Gamma\left(\frac{\xi^2}{\gamma^2 \cos^2 \theta}\right)} \  \frac{\Gamma\left(\frac{\xi^2}{\gamma^2 \sin^2 \theta}+\xi(1-z)\right)}{\Gamma\left(\frac{\xi^2}{\gamma^2 \sin^2 \theta}\right)} \\ & =  1+ \frac{\gamma^2}{2}\,(z-1) \left(\cos^2 \theta\  (z-1-\frac{1}{\xi})+\sin^2 \theta\  (z-1+\frac{1}{\xi}) +2\Delta\right)+\rho(\gamma,z) \\
& = 1+ \frac{\gamma^2}{2}\,(z-1) \left(z-1 -\frac{\cos 2\theta}{\xi}+2\Delta\right)+\rho(\gamma,z) \end{align*}
where $|\rho(\gamma,z)|=O(\gamma^4(1+|z-1|)^2)$, provided $|z-1|=o(\gamma^{-2})$. 

\subsection*{Proof of Proposition \ref{prop:asympMellL2}}

$(i)$ From \citet[item (15), p.\ 349]{Bateman54}, it can be seen that 
\[\left\{G_{1,1}^{1,1}\left(\cdot \left| \substack{1-b \\ a}\right.  \right)\right\}^2= \frac{\Gamma^2(a+b)}{\Gamma(2a+2b)}G_{1,1}^{1,1}\left(\cdot \left| \substack{1-2b \\ 2a}\right.  \right), \]
which yields
\[\Ms\left(\left\{G_{1,1}^{1,1}\left(\cdot \left| \substack{1-b \\ a}\right.  \right)\right\}^2 ;z\right) = \frac{\Gamma^2(a+b)}{\Gamma(2a+2b)} \Gamma(z+2a)\Gamma(2b-z),\]
by (\ref{eqn:MTG11}). Combining this with (\ref{eqn:G11}) leads to (\ref{eqn:MTL2})-(\ref{eqn:SL2}) after some algebraic work. 

\ppn $(ii)$ Resorting to Lemma \ref{lemma:fields}, one obtains, as $\gamma \to 0$,
\[\nu^{z-2}\left(\frac{1}{\tan^2 \theta}\right)^{\xi(z-2)}\,\frac{\Gamma\left(\frac{2\xi^2}{\gamma^2 \cos^2 \theta}+\xi(z-2) \right)}{\Gamma\left(\frac{2\xi^2}{\gamma^2 \cos^2 \theta}\right)} \, \frac{\Gamma\left(\frac{2\xi^2}{\gamma^2 \sin^2 \theta}+\xi(2-z) \right)}{\Gamma\left(\frac{2\xi^2}{\gamma^2 \sin^2 \theta}\right)} = 1+\omega(\gamma,z), \]
where $|\omega(\gamma,z)| = O(\gamma^2 (1+|z-2|))$ for $|z-2| = o(\gamma^{-2})$. On the other hand, for any $a,b >0$, 
\begin{align*}
 \frac{\Bs\left(2a,2b \right)}{\Bs^2\left(a,b \right)} & = \frac{\Gamma(2a)}{\Gamma(a)}\frac{\Gamma(2b)}{\Gamma(b)}\frac{\Gamma(a+b)}{\Gamma(2(a+b))}\frac{\Gamma(a+b)}{\Gamma(a)\Gamma(b)} \\
& = \frac{1}{2\sqrt{\pi}} \frac{\Gamma(a+1/2)}{\Gamma(a)}\frac{\Gamma(b+1/2)}{\Gamma(b)} \frac{\Gamma(a+b)}{\Gamma(a+b+1/2)} \qquad \text{(duplication formula)}.
\end{align*}
Now, as $a, b \to \infty$, use (\ref{eqn:tricomerdel}) and see
 \begin{align*} \frac{\Bs\left(2a,2b \right)}{\Bs^2\left(a,b \right)} &=\frac{1}{2\sqrt{\pi}}\, a^{1/2} (1+O(a^{-1}))\,b^{1/2} (1+O(b^{-1}))\,(a+b)^{-1/2} (1+O(a+b)^{-1})) \\ 
& = \frac{1}{2\sqrt{\pi}} \frac{1}{\left(\frac{1}{a}+\frac{1}{b}\right)^{1/2}}\,(1+O(a^{-1})+O(b^{-1})+O((a+b)^{-1})).\end{align*}
With $a = \frac{\xi^2}{\gamma^2 \cos^2 \theta}$ and $b = \frac{\xi^2}{\gamma^2 \sin^2 \theta}$, see that $1/a + 1/b = \gamma^2/\xi^2$, hence
\[ \frac{\Bs\left(\frac{2\xi^2}{\gamma^2 \cos^2 \theta},\frac{2\xi^2}{\gamma^2 \sin^2 \theta} \right)}{\Bs^2\left(\frac{\xi^2}{\gamma^2 \cos^2 \theta},\frac{\xi^2}{\gamma^2 \sin^2 \theta} \right)}  = \frac{1}{2\sqrt{\pi}}\,\frac{\xi}{\gamma} (1+O(\gamma^2)).\]
It follows
\[\Ms(L^2_{\nu,\gamma,\xi,\theta};z) = \frac{1}{2\sqrt{\pi}}\,\frac{1}{\gamma} \,(1+\omega(\gamma,z)),\]
where $|\omega(\gamma,z)| = O(\gamma^2 (1+|z-2|))$ for $|z-2| = o(\gamma^{-2})$.

\subsection*{Proof of Theorem \ref{thm:consistMISE}}

Apply Parseval's identity (\ref{eqn:Parseval}) to $\hat{f} - f$ to get 
\begin{equation} \int_0^\infty x^{2c-1} \left(\hat{f}(x) - f(x)\right)^2  \,dx = \frac{1}{2\pi} \int_{\Re(z) = c} |\Ms(\hat{f}-f;z)|^2\,dz, \label{eqn:ISE} \end{equation}
for any $c\in \Ss_{\hat{f} - f}$. Then we resort to the following lemma.

\begin{lemma} \label{lem:Sfhat} Under Assumptions \ref{ass:iid}-\ref{ass:eta}, the strip of holomorphy $\Ss_{\hat{f} - f}$ of $\hat{f} - f$ is such that
\begin{equation} \left\{z \in \C: 1-\min(\alpha,\xi/\cos^2\theta) \leq \Re(z) \leq 1+\min(\beta,\xi/\sin^2\theta) \right\} \subseteq \Ss_{\hat{f} - f}. \label{eqn:SfSfhat} \end{equation}
\end{lemma}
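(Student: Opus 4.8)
The plan is to bound $\Ss_{\hat f - f}$ from below by $\Ss_{\hat f}\cap\Ss_f$ and then to locate each of these two strips separately. Since $\hat f$ and $f$ are both probability densities, the line $\Re(z)=1$ lies in each of their strips of holomorphy, so there is no ambiguity about which strip is meant; by the (signed) linearity of the Mellin transform underlying (\ref{eqn:A1}), $\Ms(\hat f - f;z)=\Ms(\hat f;z)-\Ms(f;z)$ is holomorphic on the common strip, whence $\Ss_{\hat f - f}\supseteq\Ss_{\hat f}\cap\Ss_f$. It then suffices to show
\[\{z:1-\alpha\le\Re(z)\le1+\beta\}\subseteq\Ss_f \quad\text{and}\quad \{z:1-\tfrac{\xi}{\cos^2\theta}\le\Re(z)\le1+\tfrac{\xi}{\sin^2\theta}\}\subseteq\Ss_{\hat f},\]
and intersect the two.

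For $\Ss_f$, I would invoke the moment characterisation (\ref{eqn:Sfmom}): $z\in\Ss_f\iff\E(X^{\Re(z)-1})<\infty$. Assumption \ref{ass:Sf} supplies the two endpoint moments $\E(X^{-\alpha})<\infty$ and $\E(X^{\beta})<\infty$, and for any intermediate exponent $s\in[-\alpha,\beta]$ the elementary bound $X^{s}\le X^{-\alpha}\indic{X\le1}+X^{\beta}\indic{X>1}$ yields $\E(X^{s})<\infty$. Hence the whole closed vertical strip $1-\alpha\le\Re(z)\le1+\beta$ lies in $\Ss_f$.

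For $\Ss_{\hat f}$, I would start from the construction (\ref{eqn:MKDEconvK})--(\ref{eqn:MKDEK}). The Dirac mass has entire Mellin transform $\Ms(\delta_{X_k};z)=X_k^{z-1}$, so the Mellin-convolution theorem (\ref{eqn:prodMell}) together with the scaling rule (\ref{eqn:A2}) gives $\Ms\bigl(L^{(k)}_\eta\MC\delta_{X_k};z\bigr)=X_k^{z-1}\,\Ms(L^{(k)}_\eta;z)$ on the strip $\Ss_{L^{(k)}_\eta}$; summing over $k$ by linearity, $\Ms(\hat f;z)=\tfrac1n\sum_k X_k^{z-1}\Ms(L^{(k)}_\eta;z)$ is holomorphic on the finite intersection $\bigcap_{k}\Ss_{L^{(k)}_\eta}$, so $\Ss_{\hat f}\supseteq\bigcap_{k}\Ss_{L^{(k)}_\eta}$. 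By (\ref{eqn:SL}) the $k$-th strip is $\{z:1-\tfrac{\xi}{(\gamma^{(k)}_\eta)^2\cos^2\theta}<\Re(z)<1+\tfrac{\xi}{(\gamma^{(k)}_\eta)^2\sin^2\theta}\}$, and the crux is the observation that by the parameterisation (\ref{eqn:coefgamma}) one has $(\gamma^{(k)}_\eta)^2=\eta^2/(\eta^2+X_k)<1$ for every $k$ (since $X_k>0$), independently of $\eta$ and of the sample. Consequently $\tfrac{\xi}{(\gamma^{(k)}_\eta)^2\cos^2\theta}>\tfrac{\xi}{\cos^2\theta}$ and $\tfrac{\xi}{(\gamma^{(k)}_\eta)^2\sin^2\theta}>\tfrac{\xi}{\sin^2\theta}$, so each $\Ss_{L^{(k)}_\eta}$—and hence their finite intersection—strictly contains the fixed strip $\{z:1-\tfrac{\xi}{\cos^2\theta}\le\Re(z)\le1+\tfrac{\xi}{\sin^2\theta}\}$.

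Intersecting the two strips gives left endpoint $\max(1-\alpha,1-\tfrac{\xi}{\cos^2\theta})=1-\min(\alpha,\tfrac{\xi}{\cos^2\theta})$ and right endpoint $\min(1+\beta,1+\tfrac{\xi}{\sin^2\theta})=1+\min(\beta,\tfrac{\xi}{\sin^2\theta})$, which is precisely (\ref{eqn:SfSfhat}). For the boundary cases $\theta=0$ and $\theta=\pi/2$ I would replace (\ref{eqn:SL}) by (\ref{eqn:MTGamkern}) and (\ref{eqn:MTInvGamkern}) respectively: the kernel strip is then unbounded on one side, matching the conventions $\xi/\sin^2\theta=+\infty$ (so that $\min(\beta,+\infty)=\beta$) and $\xi/\cos^2\theta=+\infty$ (so that $\min(\alpha,+\infty)=\alpha$). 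The only genuinely delicate points are that the data- and $\eta$-dependent strip $\Ss_{\hat f}$ must contain a fixed strip uniformly over the sample, which rests entirely on $\gamma^{(k)}_\eta<1$, and that $\Ss_f$ is connected and contains the full closed interval inferred from only its two endpoint moments; both are routine.
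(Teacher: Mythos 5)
Your proof is correct and follows essentially the same route as the paper's: express $\Ms(\hat f;z)=\tfrac1n\sum_k X_k^{z-1}\Ms(L^{(k)}_\eta;z)$, note that $(\gamma^{(k)}_\eta)^2=\eta^2/(\eta^2+X_k)<1$ so every kernel strip $\Ss_{L^{(k)}_\eta}$ contains the fixed strip $\{1-\xi/\cos^2\theta\le\Re(z)\le1+\xi/\sin^2\theta\}$, use (\ref{eqn:Sfmom}) with Assumption \ref{ass:Sf} to place $\{1-\alpha\le\Re(z)\le1+\beta\}$ inside $\Ss_f$, and intersect. Your additions --- the explicit interpolation bound $X^{s}\le X^{-\alpha}\indic{X\le1}+X^{\beta}\indic{X>1}$ for intermediate moments, the containment $\Ss_{\hat f-f}\supseteq\Ss_{\hat f}\cap\Ss_f$ in place of the paper's asserted equality, and the treatment of $\theta\in\{0,\pi/2\}$ --- only make explicit details the paper leaves implicit.
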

\begin{proof} From (\ref{eqn:MKDEconvK}) and (\ref{eqn:A1}),
\begin{equation} \Ms(\hat{f};z)  = \frac{1}{n} \sum_{k=1}^n \Ms(L_\eta^{(k)};z) X_k^{z-1}, \label{eqn:MTfhat} \end{equation}
with $\Ss_{\hat{f}}= \bigcap_{k=1}^n \Ss_{L_\eta^{(k)}}$. From (\ref{eqn:SL}) and (\ref{eqn:coefgamma}), we see that
\begin{align*} \Ss_{L_\eta^{(k)}} & = \left\{z \in \C: 1-\frac{\xi (\eta^2 +X_k)}{\eta^2 \cos^2 \theta} < \Re(z) < 1+\frac{\xi (\eta^2 +X_k)}{\eta^2 \sin^2 \theta} \right\} \\ & \supseteq  \left\{z \in \C: 1-\frac{\xi}{\cos^2 \theta} \leq \Re(z) \leq 1+\frac{\xi}{\sin^2 \theta} \right\} \quad \text{ for all } k, \end{align*}
whence 
\[\Ss_{\hat{f}} \supseteq \left\{z \in \C: 1-\frac{\xi}{\cos^2 \theta} \leq \Re(z) \leq 1+\frac{\xi}{\sin^2 \theta} \right\}. \]
Assumption \ref{ass:Sf} implies, through (\ref{eqn:Sfmom}), that 
\begin{equation} \left\{z \in \C: 1-\alpha \leq \Re(z) \leq 1+\beta \right\} \subseteq  S_f. \label{eqn:Sf} \end{equation}
The result follows as $\Ss_{\hat{f} - f} = \Ss_{\hat{f}} \cap \Ss_f$, from (\ref{eqn:A1}). 
\end{proof}

\ppn Lemma \ref{lem:Sfhat} ascertains that (\ref{eqn:ISE}) is valid for any $c \in \left[1-\min(\alpha,\xi/\cos^2\theta), 1+\min(\beta,\xi/\sin^2\theta)\right]$. In particular, it is true for $c$ satisfying (\ref{eqn:ccond}), as $1-\min(\alpha,\xi/\cos^2\theta) \leq \max(2-\alpha,1-\xi/\cos^2\theta)$ and $1+\min(\beta,\xi/\sin^2\theta) \geq \min((3+2\beta)/4,1+\xi/\sin^2\theta)$.

\ppn Now, because $\Ms(\hat{f}-f;z)$ is holomorphic on $\Ss_{\hat{f}-f}$ and $\hat{f} - f$ is real-valued, $\Ms^*(\hat{f}-f;z) = \Ms(\hat{f}-f;z^*)$, where $\cdot^*$ denotes complex conjugation. Hence, $|\Ms(\hat{f}-f;z)|^2 = \Ms(\hat{f}-f;z) \times \Ms(\hat{f}-f;z^*)$. By (\ref{eqn:A1}), $\Ms(\hat{f}-f;z) = \Ms(\hat{f};z)-\Ms(f;z)$.  Hence (\ref{eqn:ISE}) is
\begin{align*} \int_0^\infty x^{2c-1} \left(\hat{f}(x) - f(x)\right)^2  \,dx  & = \frac{1}{2\pi} \int_{\Re(z) = c} \Ms(\hat{f};z)\Ms(\hat{f};z^*)\,dz \\
& -\frac{1}{2\pi} \int_{\Re(z) = c} \Ms(f;z)\Ms(\hat{f};z^*)\,dz \\
& -\frac{1}{2\pi} \int_{\Re(z) = c} \Ms(\hat{f};z)\Ms(f;z^*)\,dz \\
& +\frac{1}{2\pi} \int_{\Re(z) = c} \Ms(f;z)\Ms(f;z^*)\,dz \\
& \doteq  \text{\encircle{A}} + \text{\encircle{B}}  + \text{\encircle{C}}  + \text{\encircle{D}},
\end{align*}
and
\begin{equation} \E\left( \int_0^\infty x^{2c-1} \left(\hat{f}(x) - f(x)\right)^2  \,dx \right) =  \E\left(\text{\encircle{A}}\right) + \E\left(\text{\encircle{B}}\right)  + \E\left(\text{\encircle{C}}\right)  + \text{\encircle{D}}. \label{eqn:MISE} \end{equation}

\ppn From (\ref{eqn:MTfhat}), we have
\[\Ms(\hat{f};z)\Ms(\hat{f};z^*) = \frac{1}{n^2} \sum_{k=1}^n \Ms(L_\eta^{(k)};z) \Ms(L_\eta^{(k)};z^*) X_k^{2\Re(z)-2}  + \frac{1}{n^2} \sum_{k=1}^n \sum_{k' \neq k}  \Ms(L_\eta^{(k)};z) \Ms(L_\eta^{(k')};z^*) X_k^{z-1} X_{k'}^{z^*-1}, \]
whence
\begin{multline} \text{\encircle{A}}  = \frac{1}{n^2} \sum_{k=1}^n X_k^{2c-2} \frac{1}{2\pi} \int_{\Re(z) = c}  |\Ms(L_\eta^{(k)};z)|^2 \,dz \\ + \frac{1}{n^2} \sum_{k=1}^n \sum_{k' \neq k} \frac{1}{2\pi} \int_{\Re(z) = c} \Ms(L_\eta^{(k)};z) \Ms(L_\eta^{(k')};z^*) X_k^{z-1} X_{k'}^{z^*-1} \,dz. \label{eqn:A}\end{multline}
Given that $c \in \bigcap_{k=1}^n \Ss_{L_\eta^{(k)}}$, it holds for all $k$
\[\frac{1}{2\pi} \int_{\Re(z) = c}  |\Ms(L_\eta^{(k)};z)|^2 \,dz = \int_0^\infty x^{2c-1} {L_\eta^{(k)}}^2(x) \,dx = \Ms({L_\eta^{(k)}}^2;2c),\]
from (\ref{eqn:Parseval}) back and forth. Hence the first term in (\ref{eqn:A}), say \encircle{A}-1, is \begin{equation} \text{\encircle{A}-1}=\frac{1}{n^2} \sum_{k=1}^n X_k^{2c-2} \Ms({L_\eta^{(k)}}^2;2c). \label{eqn:AA1} \end{equation}
Note that $c \in \bigcap_{k=1}^n \Ss_{L_\eta^{(k)}} \iff 2c \in \bigcap_{k=1}^n \Ss_{{L_\eta^{(k)}}^2}$, as seen from (\ref{eqn:SL}) and (\ref{eqn:SL2}).

\ppn The second term in (\ref{eqn:A}), say \encircle{A}-2, has expectation 
\[\E\left(\text{\encircle{A}-2} \right)  = \left(1-\frac{1}{n}\right) \frac{1}{2\pi} \int_{\Re(z) = c} \E\left(\Ms(L_\eta^{(k)};z)X_k^{z-1}\right) \E\left(\Ms(L_\eta^{(k)};z^*)  X_{k}^{z^*-1}\right) \,dz \doteq \left(1-\frac{1}{n}\right)  \E\left(\text{\encircle{A}-2-a}\right), \]
for a generic $k \in \{1,\ldots,n\}$. Interchanging expectation and integral is justified as $c$ belongs to both $\Ss_f$ and $\Ss_{L_\eta^{(k)}}$ (for all $k$), making the corresponding integrals both absolutely convergent. Likewise, 
\begin{align*} \E\left(\text{\encircle{B}}\right) & = -\frac{1}{2\pi} \int_{\Re(z) = c} \Ms(f;z)\E\left(\Ms(L_\eta^{(k)};z^*)X_k^{z^*-1}\right)\,dz \\
\text{ and } \E\left(\text{\encircle{C}}\right) & = -\frac{1}{2\pi} \int_{\Re(z) = c} \Ms(f;z^*)\E\left(\Ms(L_\eta^{(k)};z)X_k^{z-1}\right)\,dz.
 \end{align*}
It is easily seen that
\begin{equation} \E\left(\text{\encircle{A}-2-a}\right) + \E\left(\text{\encircle{B}}\right) + \E\left(\text{\encircle{C}}\right) + \text{\encircle{D}}  = \frac{1}{2\pi} \int_{\Re(z) = c} \left|\E\left(\Ms(L_\eta^{(k)};z)X_k^{z-1}\right) - \Ms(f;z)\right|^2\,dz, \label{eqn:bias2} \end{equation}
which is clearly the integrated squared bias term, say $\text{IB}^2$, in the Weighted Mean Integrated Square Error expression (\ref{eqn:MISE}). The remaining $\E\left(\text{\encircle{A}-1} \right) - \frac{1}{n} \E\left(\text{\encircle{A}-2-a}\right)$ thus forms the integrated variance, say $\text{IV}$. Below, we show that $\text{IB}^2 = O(\eta^4)$ and $\text{IV} = O((n\eta)^{-1})$ as $n \to \infty$, under our assumptions.

\ppn \uline{Integrated squared bias term:} Under condition (\ref{eqn:ccond}), $c > 2-\alpha$, hence $0<\frac{c+\alpha-2}{c+\alpha-1}<1$. Let $\epsilon \doteq \epsilon_n \to 0$ as $n \to \infty$, such that $\epsilon \sim \eta^b$ for 
\begin{equation} 0< b < \frac{c+\alpha-2}{c+\alpha-1}. \label{eqn:eps}\end{equation}
Note thas this implies $\eta/\epsilon \to 0$ as $n \to \infty$. Write
\begin{equation} \Ms(L_\eta^{(k)};z)X_k^{z-1}  = \Ms(L_\eta^{(k)};z)X_k^{z-1} \indic{X_k \geq \eta^2\left(\frac{1}{\epsilon^2}-1\right)}+ \Ms(L_\eta^{(k)};z)X_k^{z-1}\indic{X_k < \eta^2\left(\frac{1}{\epsilon^2}-1\right)},\label{eqn:MLindic} \end{equation}
where $\indic{\cdot}$ is the indicator function, equal to 1 if the condition $\{\cdot\}$ is satisfied and 0 otherwise. See that $X_k \geq \eta^2\left(\frac{1}{\epsilon^2}-1\right) \iff \frac{\eta}{\sqrt{\eta^2+X_k}} \leq \epsilon \to 0$, hence one can make use of the asymptotic expansion (\ref{eqn:MLasymp})-(\ref{eqn:MLasympsimp}) with (\ref{eqn:coefgamma})-(\ref{eqn:nucoef}) to write, as $n \to \infty$,
\[ \Ms(L_\eta^{(k)};z)X_k^{z-1} \indic{X_k \geq \eta^2\left(\frac{1}{\epsilon^2}-1\right)} =  \left(1+\frac{1}{2} \frac{\eta^2}{\eta^2 + X_k}z(z-1) + R_k(\eta,z) \right) X_k^{z-1} \indic{X_k \geq \eta^2\left(\frac{1}{\epsilon^2}-1\right)}\]
where $|R_k(\eta,z)| \leq C \frac{\eta^4}{(\eta^2 + X_k)^2} (1+|z-1|)^2$ for some constant $C$. From this and (\ref{eqn:MLindic}) we have
\begin{align*} \E\left(\Ms(L_\eta^{(k)};z)X_k^{z-1}\right) - \Ms(f;z)  = & \ \E\left(X_k^{z-1}\indic{X_k \geq \eta^2\left(\frac{1}{\epsilon^2}-1\right)} \right) - \E\left(X_k^{z-1}\right) \\ 
& \ + \frac{1}{2}\eta^2 z(z-1)  \E\left(\frac{1}{\eta^2 + X_k}X_k^{z-1}\indic{X_k \geq \eta^2\left(\frac{1}{\epsilon^2}-1\right)}\right) \\
& \ + \E\left(R_k(\eta,z) X_k^{z-1} \indic{X_k \geq \eta^2\left(\frac{1}{\epsilon^2}-1\right)}\right) \\ & \  + \E\left(\Ms(L_\eta^{(k)};z)X_k^{z-1}\indic{X_k < \eta^2\left(\frac{1}{\epsilon^2}-1\right)} \right),
\end{align*}
that is,
\begin{align*} \E\left(\Ms(L_\eta^{(k)};z)X_k^{z-1}\right) - \Ms(f;z)  = & \ \frac{1}{2}\eta^2 z(z-1)  \E\left(\frac{1}{\eta^2 + X_k}X_k^{z-1}\indic{X_k \geq \eta^2\left(\frac{1}{\epsilon^2}-1\right)}\right) \\
& \ + \E\left(\left(\Ms(L_\eta^{(k)};z)-1\right)X_k^{z-1}\indic{X_k < \eta^2\left(\frac{1}{\epsilon^2}-1\right)} \right) \\
& \ + \E\left(R_k(\eta,z) X_k^{z-1} \indic{X_k \geq \eta^2\left(\frac{1}{\epsilon^2}-1\right)}\right).
\end{align*}
Hence the integrated squared bias (\ref{eqn:bias2}) is such that
\begin{align}
 \text{IB}^2 \leq & \ \frac{1}{4}\eta^4  \frac{1}{2\pi} \int_{\Re(z) = c} \left|  z(z-1)  \E\left(\frac{1}{\eta^2 + X_k}X_k^{z-1}\indic{X_k \geq \eta^2\left(\frac{1}{\epsilon^2}-1\right)}\right)\right|^2\,dz \notag \\
& \ + \frac{1}{2\pi} \int_{\Re(z) = c} \left| \E\left(\left(\Ms(L_\eta^{(k)};z)-1\right)X_k^{z-1}\indic{X_k < \eta^2\left(\frac{1}{\epsilon^2}-1\right)} \right)\right|^2\,dz \notag \\
& \ + \frac{1}{2\pi} \int_{\Re(z) = c} \left|  \E\left(R_k(\eta,z) X_k^{z-1} \indic{X_k \geq \eta^2\left(\frac{1}{\epsilon^2}-1\right)}\right)\right|^2\,dz \notag \\
& \doteq  \ \text{\encircle{E}}+ \text{\encircle{F}} + \text{\encircle{G}}. \label{eqn:EFG}
\end{align}

\ppn As $\frac{1}{(\eta^2+X_k)} \leq \frac{1}{X_k}$, $\text{\encircle{E}} \leq  \frac{1}{4}\eta^4  \frac{1}{2\pi} \int_{\Re(z) = c} \left|  z(z-1)  \E\left(X_k^{z-2}\right)\right|^2\,dz 
$. By combining (\ref{eqn:A4}) and (\ref{eqn:A7}), it is seen that $z(z-1)\E\left(X_k^{z-2}\right) = z(z-1)\Ms(f;z-1) = \Ms(xf''(x);z)$ if $z-1 \in \Ss_f$, which is the case here by (\ref{eqn:Sf}) and because $\Re(z) =c \geq 2 - \alpha$ by (\ref{eqn:ccond}). With (\ref{eqn:Parseval}), $\text{\encircle{E}}  \leq \frac{1}{4}\eta^4 \int_0^\infty x^{2c+1} f''^2(x)\,dx$, hence 
\begin{equation} \text{\encircle{E}}=O(\eta^4). \label{eqn:E}\end{equation}

\ppn Given that $c \in \bigcap_{k=1}^n \Ss_{L_\eta^{(k)}}$, $\sup_{z\in \C: \Re(z) =c} \max_{k=1,\ldots,n}|\Ms(L_\eta^{(k)};z)|\leq C$ for some constant $C$ and 
\[ \text{\encircle{F}} \leq (1+C)^2 \frac{1}{2\pi} \int_{\Re(z) = c} \left| \E\left(X_k^{z-1}\indic{X_k < \eta^2\left(\frac{1}{\epsilon^2}-1\right)} \right)\right|^2\,dz.\]
Now, 
\[ \E\left(X_k^{z-1}\indic{X_k < \eta^2\left(\frac{1}{\epsilon^2}-1\right)} \right)  = \int_0^{\eta^2\left(\frac{1}{\epsilon^2}-1\right)} x^{z-1}f(x)\,dx = \Ms\left(f(x) \indic{x< \eta^2\left(\frac{1}{\epsilon^2}-1\right)};z\right). \]
Clearly the strip of holomorphy of $f$ is contained in that of any of its restriction on $\R^+$, so by (\ref{eqn:Parseval}) again, 
\[ \text{\encircle{F}} \leq (1+C)^2 \int_0^\infty x^{2c-1} f^2(x) \indic{x< \eta^2\left(\frac{1}{\epsilon^2}-1\right)}\,dx =(1+C)^2\int_0^{\eta^2\left(\frac{1}{\epsilon^2}-1\right)} x^{2c-1} f^2(x)\,dx.\]
By Assumpion \ref{ass:Sf}, $\E(X^{-\alpha}) < \infty$, which implies $f(x) = o(x^{\alpha-1})$ as $x \to 0$. Hence
\[ \text{\encircle{F}} = o\left(\left(\frac{\eta^2}{\epsilon^2}\right)^{2c+2\alpha-2}\right), \]
following Example 4 in \citet[Section 1.1.1]{Paris01}. With $\epsilon \sim \eta^b$ and condition (\ref{eqn:eps}), it can be checked that this is
\begin{equation} \text{\encircle{F}} = o(\eta^4). \label{eqn:F}\end{equation}

\ppn Finally,  
\begin{align*} |R_k(\eta,z)|\indic{X_k \geq \eta^2\left(\frac{1}{\epsilon^2}-1\right)} & \leq C \frac{\eta^4}{(\eta^2+X_k)^2}(1+|z-1|)^2 \indic{X_k \geq \eta^2\left(\frac{1}{\epsilon^2}-1\right)} \\ 
& = C \frac{\eta^2}{\eta^2+X_k}(1+|z-1|)^2 \frac{\eta^2}{\eta^2+X_k}\indic{X_k \geq \eta^2\left(\frac{1}{\epsilon^2}-1\right)}\\
& \leq C \frac{\eta^2}{X_k} (1+|z-1|)^2 \epsilon^2,  \end{align*}
and it follows
\[\text{\encircle{G}} \leq C \frac{1}{2\pi} \eta^4 \epsilon^4 \int_{\Re(z) = c} (1+|z-1|)^2 \left|  \E\left(X_k^{z-2} \right)\right|^2\,dz. \]
The integral may be seen to be bounded by (\ref{eqn:A8}), as $z-1 \in S_f$ for $\Re(z) = c > 2-\alpha$ under condition (\ref{eqn:ccond}), hence
\begin{equation} \text{\encircle{G}}  = O(\eta^4 \epsilon^4) = o(\eta^4). \label{eqn:G} \end{equation}
It follows from (\ref{eqn:EFG}), (\ref{eqn:E}), (\ref{eqn:F}) and (\ref{eqn:G}) that 
\[\text{IB}^2 = O(\eta^4). \]

\ppn \uline{Integrated variance term:} Consider again $\epsilon \doteq \epsilon_n \to 0$ with $\eta/\epsilon \to 0$ as $n \to \infty$. Then write (\ref{eqn:AA1}) as
\begin{align} \text{\encircle{A}-1} & = \frac{1}{n^2} \sum_{k=1}^n X_k^{2c-2} \Ms({L_\eta^{(k)}}^2;2c) \indic{X_k \geq \eta^2\left(\frac{1}{\epsilon^2}-1\right)} + \frac{1}{n^2} \sum_{k=1}^n X_k^{2c-2} \Ms({L_\eta^{(k)}}^2;2c) \indic{X_k < \eta^2\left(\frac{1}{\epsilon^2}-1\right)} \label{eqn:A1var}\\ 
& \doteq \text{\encircle{A}-1-a} + \text{\encircle{A}-1-b}. \notag\end{align}
Seeing again that $X_k \geq \eta^2\left(\frac{1}{\epsilon^2}-1\right) \iff \frac{\eta}{\sqrt{\eta^2+X_k}} \leq \epsilon \to 0$, one can write the expansion (\ref{eqn:ML2asymp}) for $\Ms({L_\eta^{(k)}}^2;2c)$ in \text{\encircle{A}-1-a}, that is, making use of (\ref{eqn:coefgamma})-(\ref{eqn:nucoef}),
\[\Ms({L_\eta^{(k)}}^2;2c) = \frac{1}{2\sqrt{\pi}} \frac{\sqrt{\eta^2+X_k}}{\eta} ( 1 + \Omega_k(\eta,2c)),\]
where $|\Omega_k(\eta,2c)| =O\left(\frac{\eta^2}{\eta^2+X_k} (1+|2c-2|)\right) =O(\epsilon^2)$. Also, $\sqrt{\eta^2 + X_k}/\sqrt{X_k} = \sqrt{1+ \eta^2/X_k} \leq 1/\sqrt{1-\epsilon^2} \leq 1+\epsilon^2$, for $n$ large enough. This means that, as $n\to \infty$,
\[\Ms({L_\eta^{(k)}}^2;2c) =  \frac{1}{2\sqrt{\pi}} \frac{\sqrt{X_k}}{\eta}(1 + \Omega'_k(\eta,2c)), \]
where $|\Omega'_k(\eta,2c))|\leq C \epsilon^2$ for some constant $C$, yielding
\[ \text{\encircle{A}-1-a} = \frac{1}{n^2 \eta} \frac{1}{2\sqrt{\pi}}\sum_{k=1}^n X_k^{2c-3/2} \indic{X_k \geq \eta^2\left(\frac{1}{\epsilon^2}-1\right)}(1+O(\epsilon^2)). \] 
Assumption \ref{ass:Sf} ensures that $f(x) = o(x^{\alpha-1})$ as $x \to 0$, whence
\[\P\left(X_k \geq \eta^2\left(\frac{1}{\epsilon^2}-1\right)\right) = 1 - \int_0^{\eta^2\left(\frac{1}{\epsilon^2}-1\right)} f(x)\,dx =1-O\left(\left(\frac{\eta^2}{\epsilon^2}\right)^\alpha \right) = 1-o(1).\]
It follows
\[\E\left(\text{\encircle{A}-1-a} \right) = \frac{1}{n \eta} \frac{1}{2\sqrt{\pi}} \Ms(f;2c-1/2) \left(1-o(1) \right) \left(1+O(\epsilon^2) \right).\]
This is $O((n\eta)^{-1})$ if $2c-1/2 \in \Ss_f$, which is the case under condition (\ref{eqn:ccond}).

\ppn Now, because $2c \in \bigcap_{k=1}^n \Ss_{{L_\eta^{(k)}}^2}$, each $|\Ms({L_\eta^{(k)}}^2;2c)|$ is finite and $\max_{1 \leq k \leq n} |\Ms({L_\eta^{(k)}}^2;2c)| \leq C$, for $C$ some constant. Hence
\begin{equation} \text{\encircle{A}-1-b} \leq  \frac{C}{n^2} \sum_{k=1}^n X_k^{2c-2} \indic{X_k < \eta^2\left(\frac{1}{\epsilon^2}-1\right)}. \label{eqn:A1b} \end{equation}
Similarly to above,
\[\E\left(X^{2c-2}\indic{X < \eta^2\left(\frac{1}{\epsilon^2}-1\right)}\right) = \int_0^{\eta^2\left(\frac{1}{\epsilon^2}-1\right)} x^{2c-2} f(x)\,dx = o\left(\left(\frac{\eta^2}{\epsilon^2}\right)^{2c-2+\alpha}\right) \qquad \text{ as } n \to \infty. \]
making use again of $f(x) = o(x^{\alpha-1})$  as $x \to 0$. Taking expectations in (\ref{eqn:A1b}) yields
\[\E\left(\text{\encircle{A}-1-b}\right)= o\left(n^{-1}\,\left(\frac{\eta^2}{\epsilon^2}\right)^{2c-2+\alpha} \right). \]
It can be checked that, for $c \geq 3/4 -\alpha/2$, $\left(\frac{\eta^2}{\epsilon^2}\right)^{2c-2+\alpha} = O(\eta^{-1})$. Hence, $\E\left(\text{\encircle{A}-1-b}\right) = o((n\eta)^{-1})$, leading to 
\begin{equation} \E(\text{\encircle{A}-1}) = O((n\eta)^{-1}). \label{eqn:EA1} \end{equation}

\ppn The dominant term in $\E\left(\text{\encircle{A}-2-a}\right)$ can be understood to be $\text{\encircle{D}}$. Yet, 
\begin{align*}
 \text{\encircle{D}} & = \frac{1}{2\pi} \int_{\Re(z) = c} \Ms(f;z)\Ms(f;z^*)\,dz \\
& = \frac{1}{2\pi} \int_{\Re(z) = c} |\Ms(f;z)|^2\,dz \\
& = \int_0^\infty x^{2c-1} f^2(x)\,dx
\end{align*}
which is bounded for any $c \in \Ss_f$. Hence $\E\left(\text{\encircle{A}-2-a}\right)/n = O(n^{-1}) = o((n\eta)^{-1})$, which shows
\[\text{IV} = O((n\eta)^{-1}). \]
\qed

\subsection*{Proof of Proposition \ref{cor:optrate2}}

We just show that (\ref{eqn:E}) holds true if $\int_0^\infty (xf''(x))^2\,dx < \infty$. From (\ref{eqn:EFG}), 
\[\text{\encircle{E}}  \leq  \frac{1}{4}\eta^4  \frac{1}{2\pi} \int_{\Re(z) = c} \left|  z(z-1)  \E\left(X_k^{z-2}\indic{X_k \geq \eta^2\left(\frac{1}{\epsilon^2}-1\right)}\right)\right|^2\,dz. \]
Now, 
\[\E\left(X_k^{z-2}\indic{X_k \geq \eta^2\left(\frac{1}{\epsilon^2}-1\right)}\right) = \int_{\eta^2\left(\frac{1}{\epsilon^2}-1\right)}^\infty x^{z-2} f(x)\,dx = \Ms\left(f(x) \indic{x \geq \eta^2\left(\frac{1}{\epsilon^2}-1\right)};z-1\right).\]
The strip of holomorphy of $f(x) \indic{x \geq \cdot }$ is $(-\infty,1+\beta)$, as $f(x) \indic{x \geq \cdot} \equiv 0 $ for $x \simeq 0$ (`flat' head). So for any $c \leq 1+\beta$, 
\begin{align*}
 \text{\encircle{E}}  \leq & \ \frac{1}{4}\eta^4  \frac{1}{2\pi} \int_{\Re(z) = c} \left|  z(z-1)  \Ms\left(f(x) \indic{x \geq \eta^2\left(\frac{1}{\epsilon^2}-1\right)};z-1\right)\right|^2\,dz \\
 & = \frac{1}{4}\eta^4 \int_0^\infty x^{2c-1} (xf''(x))^2 \indic{x \geq \eta^2\left(\frac{1}{\epsilon^2}-1\right)}\,dx \\
 & \leq \frac{1}{4}\eta^4 \int_0^\infty x^{2c-1} (xf''(x))^2 \,dx,
\end{align*}
by (\ref{eqn:Parseval}). Taking $c=1/2$ yields the result, as $\int_0^\infty (xf''(x))^2 \,dx < \infty$.\qed

\subsection*{Proof of Theorem \ref{thm:bias-var}}

The proof is very similar to the proof of Theorem \ref{thm:consistMISE}, hence only a sketch is given. Using the inverse Mellin transform expression (\ref{eqn:invMellin}), we can write
\[\left( \hat{f}(x) - f(x)\right)^2  =  \left(\frac{1}{2\pi i} \int_{\Re(z)=c} x^{-z} (\Ms(\hat{f};z) - \Ms(f;z)) \,dz \right)^2,\]
where $\Ms(\hat{f};z)$ is given by (\ref{eqn:MTfhat}) and $c$ is any value in $\left[1-\min(\alpha,\xi/\cos^2\theta), 1+\min(\beta,\xi/\sin^2\theta)\right] \subseteq \Ss_{\hat{f}-f}$, from Lemma \ref{lem:Sfhat}. Expanding the square, working out the terms and taking expectations yield, after lengthy derivations,
\begin{align*}
\text{MSE}\left( \hat{f}(x)\right) =  \E\left( \left( \hat{f}(x) - f(x)\right)^2\right)  = & \Bigg\{\frac{1}{2\pi i} \int_{\Re(z)=c} x^{-z} \left(\E\left(\Ms(L_\eta^{(k)};z) X_k^{z-1}\right) - \Ms(f;z)\right)\,dz \Bigg\}^2 \\
& + \frac{1}{n}\Bigg\{ \frac{1}{2\pi i} \int_{\Re(z)=2c-1} x^{-(z+1)} \E\left(X_k^{z-1}\Ms({L_\eta^{(k)}}^2;z+1)\right)\, dz  \\ & \quad \quad \quad  - \left(\frac{1}{2\pi i}  \int_{\Re(z)=c} x^{-z} \E\left(\Ms(L_\eta^{(k)};z) X_k^{z-1}\right)\,dz \right)^2 \Bigg\}.
\end{align*}
Clearly the first term is the square of the inverse Mellin transform of $\Ms(\E(\hat{f});z) - \Ms(f;z)$, hence is the squared pointwise bias term, say $B^2(x)$, in the usual expansion of the MSE of $\hat{f}(x)$. The term in $1/n$ is thus the pointwise variance, $V(x)$.

\ppn Acting essentially as in the proof of Theorem \ref{thm:consistMISE}, in particular making use of expansion (\ref{eqn:MLasymp})-(\ref{eqn:MLasympsimp}) again, one obtains that the dominant term asymptotically in the squared bias term is 
\begin{align*}
B^2(x) \sim \frac{1}{2} & \eta^2 \frac{1}{2\pi i} \int_{\Re(z)=c} x^{-z}  z(z-1) \Ms(f;z-1)\,dz \\
& = \frac{1}{2} \eta^2 \frac{1}{2\pi i} \int_{\Re(z)=c} x^{-z}  \Ms(x f''(x);z)\,dz,
\end{align*}
from (\ref{eqn:A4}) and (\ref{eqn:A7}), provided $z-1 \in \Ss_f$, that is, $c-1 \in \Ss_f$. It follows that 
\begin{equation} B^2(x) \sim  \frac{1}{2} \eta^2 xf''(x), \qquad \text{ as } n \to \infty. \label{eqn:psb} \end{equation}

\ppn Making use of expansion (\ref{eqn:ML2asymp}), one finds that, asymptotically,
\[\E\left(X_k^{z-1}\Ms({L_\eta^{(k)}}^2;z+1)\right) \sim \frac{1}{2\sqrt{\pi}} \frac{1}{\eta} \Ms(f;z+1/2),\]
which, plugged in the expression of $V(x)$, yields the following dominant term for the pointwise variance:
\begin{align}
V(x)  \sim &\  \frac{1}{2\sqrt{\pi}} \frac{1}{n\eta} \frac{1}{2\pi i} \int_{\Re(z)=2c-1} x^{-(z+1)} \Ms(f;z+1/2) \,dz \notag \\
& = \frac{1}{2\sqrt{\pi}} \frac{1}{n\eta}\frac{1}{\sqrt{x}} \frac{1}{2\pi i} \int_{\Re(z)=2c-1} x^{-(z+1/2)} \Ms(f;z+1/2) \,dz \notag \\
&  = \frac{1}{2\sqrt{\pi}} \frac{1}{n\eta}\frac{1}{\sqrt{x}} \frac{1}{2\pi i} \int_{\Re(z)=2c-1/2} x^{-z} \Ms(f;z) \,dz  \notag \\
& = \frac{1}{2\sqrt{\pi}} \frac{f(x)}{n \eta \sqrt{x}}, \label{eqn:pv}
\end{align}
provided $2c-1/2 \in \Ss_f$.

\ppn So, (\ref{eqn:psb}) and (\ref{eqn:pv}) are the asymptotic bias and variance of $\hat{f}(x)$, provided that there exists $c \in \left[1-\min(\alpha,\xi/\cos^2\theta), 1+\min(\beta,\xi/\sin^2\theta)\right]$ such that $1-\alpha < c-1 < 1 + \beta$ and $1-\alpha < 2c-1/2 < 1 + \beta$. Assumptions \ref{ass:Sf} and \ref{ass:Lxi} ensure there is such a $c$. \qed

\end{document}